\documentclass[11pt]{article}

\usepackage{amsmath,amssymb,amsthm,color,graphicx}
\usepackage{epsfig}
\usepackage{subfig}
\usepackage{comment}
\usepackage{epstopdf}
\usepackage{url}
\usepackage{hyperref}
\usepackage{cleveref}
\usepackage{enumitem}
\usepackage{thmtools,thm-restate}

\newtheorem{theorem}{Theorem}
\newtheorem{lemma}{Lemma}

\newtheorem{proposition}{Proposition}
\newtheorem{conjecture}{Conjecture}
\newtheorem{corollary}{Corollary}
\newtheorem{observation}{Observation}

\theoremstyle{definition}
\newtheorem{definition}{Definition}

\newcommand{\ex}{\mbox{ex}}

\newcommand{\ignore}[1]{}

\def\clap#1{\hbox to 0pt{\hss#1\hss}}

\makeatletter
  \def\moverlay{\mathpalette\mov@rlay}
  \def\mov@rlay#1#2{\leavevmode\vtop{%
    \baselineskip\z@skip \lineskiplimit-\maxdimen
    \ialign{\hfil$#1##$\hfil\cr#2\crcr}}}
\makeatother

\def\L{{\cal L}}
\def\C{{\cal C}}

\def\S{{\cal S}}

\newcommand{\remove}[1]{}

\newcommand{\E}[1]{\mathbb{E}(#1)}

\begin{document}
\title{On the maximum number of tangencies among $1$-intersecting curves}

	\author{
	Eyal Ackerman\thanks{Department of Mathematics, Physics and Computer Science,
		University of Haifa at Oranim, 	Tivon 36006, Israel. \texttt{ackerman@math.haifa.ac.il}}\and
	Bal\'azs Keszegh\thanks{HUN-REN Alfréd Rényi Institute of Mathematics and ELTE Eötvös Loránd University, Budapest, Hungary. 
		Research supported by the National Research, Development and Innovation Office -- NKFIH under the grant K 132696 and by the ERC Advanced Grant ``ERMiD''. This research has been implemented with the support provided by the Ministry of Innovation and Technology of Hungary from the National Research, Development and Innovation Fund, financed under the  ELTE TKP 2021-NKTA-62 funding scheme.}
}
\maketitle

\begin{abstract}
According to a conjecture of Pach, there are $O(n)$ tangent pairs among any family of $n$ Jordan arcs in which every pair of arcs has precisely one common point and no three arcs share a common point.
This conjecture was proved for two special cases, however, for the general case the currently best upper bound is only $O(n^{7/4})$. This is also the best known bound on the number of tangencies in the relaxed case where every pair of arcs has \emph{at most} one common point.
We improve the bounds for the latter and former cases to $O(n^{5/3})$ and $O(n^{3/2})$, respectively.
We also consider a few other variants of these questions, for example, we show that if the arcs are \emph{$x$-monotone}, each pair intersects at most once and their left endpoints lie on a common vertical line, then the maximum number of tangencies is $\Theta(n^{4/3})$.
Without this last condition the number of tangencies is $O(n^{4/3}(\log n)^{1/3})$,  improving a previous bound of Pach and Sharir.
Along the way we prove a graph-theoretic theorem which extends a result of Erd\H{o}s and Simonovits and may be of independent interest.
\end{abstract}

\section{Introduction}

Arrangements of curves play an important role in Computational and Combinatorial Geometry with applications in domains such as robotics,
computer graphics, computer vision, and combinatorial optimization~\cite{PachSharir2009_CGAA}.
Many interesting problems in these areas can be phrased as
determining the maximum number of \emph{tangencies} among curves under various restrictions.
For example, occasionally, analyzing the complexity of an arrangement of curves boils down to estimating the number of faces of size two~\cite{lenses}.
Such faces can often be perturbed into tangency points.
Another example is Erd\H{o}s's famous \emph{Unit Distance Problem} which asks for the maximum number of pairs of points that are at unit distance from each other among $n$ distinct points in the plane. 
This is equivalent to asking for the maximum number of tangencies among $n$ unit circles in the plane.

Throughout this paper we consider planar curves, that is, Jordan arcs,
and assume that every two curves intersect in a finite number of points. 
A family of curves is \emph{$k$-intersecting} if every two curves in it intersect in at most $k$ points.
Two curves are said to be \emph{tangent} or \emph{touching} if they have precisely one common point and this point is not a crossing point.\footnote{Here we follow the definition in, e.g.,~\cite{PRT18}. In other papers, e.g.,~\cite{Ellenberg2016New}, two curves may cross and also touch each other (at different points). However, for $1$-intersecting curves the two definitions coincide.}
We will mainly consider the maximum number of tangencies among families of $1$-intersecting curves. 
Such families are sometimes called \emph{pseudo-segments}.\footnote{However, the term `pseudo-segments' is somewhat ill-suited in the context of tangencies, since straight-line segments can only be tangent at an endpoint.}
A family of curves is \emph{precisely} $1$-intersecting if every pair of curves has precisely one common point (either a crossing or a tangency point).
We will always assume that no three curves share a common point, for otherwise, it is possible to have one point at which every pair of curves is tangent.
Therefore, the number of tangent pairs is equal to the number of tangency points.

J\'anos Pach proposed the following nice conjecture.
\begin{conjecture}[Pach~\cite{pachpc}]\label{conj:precisely-1}
	Every family of $n$ precisely $1$-intersecting planar curves in which no three curves share a common point admits $O(n)$ tangencies.
\end{conjecture}

Gy{\"o}rgyi et al.~\cite{GYORGYI201829} proved this conjecture for the very special case where the endpoints of the curves lie within a constant number of cells of their arrangement.
Conjecture~\ref{conj:precisely-1} was also settled by the authors for \emph{$x$-monotone} curves~\cite{x-mon}.
Recall that a curve is \emph{$x$-monotone} (resp., \emph{bi-infinite} $x$-monotone) if it is the graph of some continuous function over some closed interval (resp., over $\mathbb{R}$).
For arbitrary families of (precisely) $1$-intersecting curves a non-trivial $O(n^{7/4})$ upper bound on the number of tangencies follows from a result of Keszegh and P{\'a}lv{\"o}lgyi~\cite{KP23}:

\begin{theorem}[~\cite{KP23}]\label{thm:KP}
	Every family of $n$ $k$-intersecting planar curves admits $O_k\left(n^{2-\frac{1}{k+3}}\right)$ tangencies.
\end{theorem}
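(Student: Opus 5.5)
We plan to derive the bound by a forbidden-subgraph argument on the \emph{tangency graph}. Let $G$ be the bipartite-type graph whose vertices are the $n$ curves and whose edges are the tangent pairs. We aim to show that $G$ contains no $K_{k+2,t}$ (or, more precisely, no $K_{s,t}$ with $s=k+3$ on one side and some $t=t(k)$ on the other) among curves of a suitably restricted form. Then the K\H{o}v\'ari--S\'os--Tur\'an theorem gives $|E(G)|=O_k(n^{2-1/s})$, which with $s=k+3$ yields exactly the exponent $2-\frac{1}{k+3}$. So the whole task reduces to a geometric lemma: a family of $k$-intersecting curves cannot contain $k+3$ curves that are all tangent to each of $t$ other curves, for $t$ large enough in terms of $k$.

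To prepare, we first normalize the tangencies. At each tangency point between $a$ and $b$, one curve locally lies on one side of the other. We classify each tangent pair by a constant number of local types: which side, and the orientations induced by fixing a direction on each curve. By pigeonhole, a constant fraction of the edges have the same type, so we may assume all tangencies are uniform. We also discard curves near endpoints: a tangency at an endpoint of a curve is charged to that curve, and there are at most $O(n)$ of those per endpoint configuration after a standard argument. In practice we simply keep the uniform-type subgraph, since losing a constant factor is harmless.

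The key step, and the main obstacle, is the geometric lemma. Suppose curves $a_1,\dots,a_{s}$ are each tangent to all of $b_1,\dots,b_t$. Consider the arrangement of $a_1,\dots,a_s$. It has $O_k(s^2)$ complexity, since each pair meets in at most $k$ points. Each $b_j$ touches every $a_i$ exactly at a tangency, and meets each $a_i$ in at most $k$ points in total. First we would show that $b_j$ passes through a bounded sequence of faces, and that the ordered pattern of touched sides and faces visited by $b_j$ determines a combinatorial ``signature'' from a set of size $C(k,s)$. With $t>C(k,s)$, two curves $b_j,b_{j'}$ share a signature. The goal is then to show that two curves with the same signature, both tangent to all $s\ge k+3$ curves from the same side, must intersect more than $k$ times. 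Roughly, between consecutive touchings with $a_i$ and $a_{i+1}$ the curves $b_j$ and $b_{j'}$ are forced into the same thin region, and they must swap order, i.e.\ cross. Getting at least $k+1$ forced crossings from $k+3$ touched curves is where I expect the difficulty to lie: the count of $k+3$ presumably comes from losing two at the ends and needing $k+1$ interior swaps.

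If this direct parity/ordering argument proves too delicate, the fallback is a charging via lenses. Tangencies can be perturbed into lenses (faces of size two), and we would apply known bounds on the number of lenses in $k$-intersecting arrangements, combined with the same K\H{o}v\'ari--S\'os--Tur\'an counting.
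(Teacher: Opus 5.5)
There is a genuine gap. The overall frame is fine. Restricting to one tangency type costs only a constant factor. If the tangency graph contains no $K_{k+3,t(k)}$, the K\H{o}v\'ari--S\'os--Tur\'an theorem, with the sides arranged so that $k+3$ appears in the exponent, gives $O_k(n^{2-\frac{1}{k+3}})$. The paper itself only quotes this result from \cite{KP23}, so it offers no proof to compare against.

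The gap is the geometric lemma you reduce to: that two of the $b_j$ with equal signature must cross at least $k+1$ times. This lemma is the entire content of the theorem, and you state it only as a goal. Note also that with the paper's definition a tangent pair has exactly one common point. So each $b_j$ meets $a_1\cup\dots\cup a_s$ only at its $s$ touching points and lies in a single face $F$ of the arrangement of the $a_i$. There is no ``sequence of faces''; the question is only how two curves inside one face can touch the same $s$ boundary edges in the same order.

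Your ``thin region'' mechanism does work when $F$ is simply connected. Then $b_j$ cuts $F$ into $s$ pieces whose dual graph is a tree, and each piece meets $\partial F$ in one arc between circularly consecutive touching points. Counting, for each chord $b_j[p_i,p_{i+1}]$, how often the touching points of $b_{j'}$ switch sides gives at least $s-2=k+1$ crossings.

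The trouble is that the hypothesis only bounds the intersections among the $a_i$ from above. They may be pairwise disjoint, or their arrangement may be disconnected, so $F$ can have holes. Chords of $b_j$ that join different boundary components of $F$ do not separate it. For example, if the $a_i$ are pairwise disjoint, then $b_j\cup a_1\cup\dots\cup a_s$ is a tree and does not separate the plane at all. In that case:
\begin{itemize}
\item $b_{j'}$ is never confined to a thin region;
\item a single region can contain many touching points;
\item crossings of $b_{j'}$ with non-separating chords are invisible to any side count.
\end{itemize}
Crossings are then forced only by the global order-and-orientation data, as in the Pinchasi--Radoi\v{c}i\'c lemma used in Proposition~\ref{prop:abc-abc}. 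That lemma by itself only guarantees that some crossing occurs. You give no argument that turns these constraints into $k+1$ crossings, and ``losing two at the ends'' is a guess rather than a proof.

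The lens fallback does not close the gap either. Perturbing tangencies into lenses only restates the problem, and no lens bound of the required strength is available for arbitrary $k$-intersecting curves.
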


We improve this bound for (precisely) $1$-intersecting families as follows.

\begin{restatable}{theorem}{OneIntThm}\label{thm:1-int}
	Every family of $n$ $1$-intersecting planar curves admits $O(n^{5/3})$ tangencies.
\end{restatable}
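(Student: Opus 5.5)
Our plan is to study the \emph{tangency graph} $G$. Its vertex set is the family of curves, and two curves are adjacent if they are tangent. We will show that $G$ contains no $K_{3,3}$, or more generally no $K_{t,t}$ for a suitable constant $t$. The K\H{o}v\'ari--S\'os--Tur\'an theorem then gives $|E(G)|=O(n^{2-1/3})=O(n^{5/3})$. The exponent $5/3$ is exactly the K\H{o}v\'ari--S\'os--Tur\'an bound for $K_{3,3}$-free graphs, which supports choosing $t=3$. It also matches the shape of Theorem~\ref{thm:KP}: plugging $k=0$ into $2-\frac{1}{k+3}$ gives $5/3$. So we expect a better Ramsey-type argument to make the $1$-intersecting case behave like the $k=0$ case of that theorem.

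The first step is to normalize tangencies. At each tangency point $p$ of curves $a$ and $b$, we look at a small disk around $p$. Since $a$ and $b$ do not cross at $p$, one of them lies locally on one side of the other. We record an orientation of the tangency (which curve is ``above'') and the directions in which the curves are traversed. Partitioning $E(G)$ into a constant number of classes by this local type loses only a constant factor, so we may assume all tangencies have the same type.

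The core step is to show that three curves $a_1,a_2,a_3$ cannot each be tangent to three curves $b_1,b_2,b_3$ in a $1$-intersecting family with no three curves through a point. We would like to draw a contradiction from planarity. Contract each curve to a vertex, and consider the subcurves of $b_j$ between consecutive tangency points. Since each pair of curves meets only once, the pieces of the $a_i$'s and $b_j$'s near the tangencies can be rerouted into a drawing of $K_{3,3}$ in which edges cross only at crossings of the original curves. We then need to control the parity of these crossings, and use the same-type assumption to get an even drawing. By the Hanani--Tutte theorem this contradicts the non-planarity of $K_{3,3}$.

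We expect the main obstacle to be that the arcs may cross each other an uncontrolled number of times among the nine pairs. If a clean $K_{3,3}$ exclusion fails, we will fall back on a weaker version. First, we would pass to a subfamily in which the order of tangency points along every curve is consistent; this can be arranged by Ramsey or Erd\H{o}s--Szekeres arguments, at the cost of excluding $K_{t,t}$ only for large $t$. Second, we would prove directly that $G$ is $K_{3,t}$-free. K\H{o}v\'ari--S\'os--Tur\'an for $K_{3,t}$ still yields $O_t(n^{5/3})$, which suffices for the theorem.
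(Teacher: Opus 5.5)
\textbf{Your key lemma is false, and so is your fallback.} The tangency graph of a $1$-intersecting family can contain $K_{3,t}$ for every $t$, even with all tangencies of a single type.

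Let $a_1=[0,10]\times\{0\}$, $a_2=[0,10]\times\{1\}$ and $a_3=[0,10]\times\{2\}$. Pick $10>x_1>\dots>x_t>0$, $0<h_1<\dots<h_t<1$ and $0<y_1<\dots<y_t<10$. The curve $b_j$ is built as follows:
\begin{itemize}
\item it starts just above $a_1$ and dips, moving east, to touch $a_1$ at $(x_j,0)$;
\item it rises along $x=x_j+\epsilon$ and passes westward over $(x_j,1)$, touching $a_2$ from below;
\item it descends along $x=x_j-\epsilon$ to height $h_j$ and runs west along $y=h_j$;
\item it goes around the left endpoint of $a_2$ (these detours are nested, with $b_t$ innermost);
\item it runs east along $y=2-h_j$ to $x=y_j+\epsilon$, rises, and passes westward over $(y_j,2)$, touching $a_3$ from below.
\end{itemize}

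For $j<k$, the only common point of $b_j$ and $b_k$ is where the horizontal piece $y=h_j$ of $b_j$ crosses the ascending piece of $b_k$ near $x_k$. The detours and the upper L-shaped pieces are nested. The $a_i$ are pairwise disjoint, each $b_j$ meets each $a_i$ exactly once without crossing, and no three curves share a point.

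Orient $a_1$ westward, $a_2,a_3$ eastward, and each $b_j$ from $a_1$ towards $a_3$. Then all $3t$ tangencies are of right--right type, so your type normalization does not help. Since the configuration exists, no Hanani--Tutte parity argument can yield the contradiction you want.

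Consequently, any K\H{o}v\'ari--S\'os--Tur\'an argument must forbid some $K_{s,t}$ with $\min(s,t)\ge 4$. That gives at best $O(n^{7/4})$, which is exactly Theorem~\ref{thm:KP}. Your Ramsey fallback, which forbids $K_{t,t}$ only for large $t$, is weaker still. The ``$k=0$'' heuristic is also meaningless: $0$-intersecting curves have no tangencies at all.

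\textbf{What is missing.} Since $K_{3,3}=K_{2,2}^+$, the right replacement for $K_{3,3}$-freeness is a \emph{density} condition on sub-bineighborhoods, and this is what the paper proves.
\begin{itemize}
\item Take tangent curves $v_0,v_1$ and disjoint sets $V_i\subseteq N(v_i)\setminus\{v_{1-i}\}$. Cutting each curve of $V_i$ at its tangency point with $v_i$ yields families grounded at $v_0$ and $v_1$.
\item Corollary~\ref{cor:2-grounded} then bounds the tangencies between them by $O(|V_0\cup V_1|^{3/2})$.
\item That corollary uses the grounding essentially. For a fixed tangency type, the orders of red curves along blue curves share no common ordered triple; this comes from Pinchasi--Radoi\v{c}i\'c's lemma plus the fact that the grounding curve cannot be separated. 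Hence Theorem~\ref{thm:intersection-reverse} applies.
\item Finally, Theorem~\ref{thm:spare-neighborhoods-ub} with $\alpha=1/2$ gives $O(n^{2-\alpha/(\alpha+1)})=O(n^{5/3})$.
\end{itemize}
In the example above, the pair $a_1,b_1$ has sub-bineighborhoods spanning only $2(t-1)$ edges. So the sparsity condition holds even though $K_{3,t}$-freeness fails.
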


\begin{restatable}{theorem}{PWOneIntThm}\label{thm:precisely-1-int}
	Every family of $n$ precisely $1$-intersecting planar curves admits $O(n^{3/2})$ tangencies.
\end{restatable}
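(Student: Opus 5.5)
The plan is to bound the number of edges of the \emph{tangency graph} $G$, whose vertices are the $n$ curves and whose edges are the tangent pairs. I will show that, after discarding a constant fraction of the edges, $G$ contains no $K_{2,t}$ for some absolute constant $t$. The K\H{o}v\'ari--S\'os--Tur\'an theorem then gives $|E(G)| = O(\sqrt{t}\, n^{3/2}) = O(n^{3/2})$. This is exactly the exponent we want, and it is where the stronger hypothesis ``precisely $1$-intersecting'' should be used, as opposed to the $O(n^{5/3})$ argument for Theorem~\ref{thm:1-int}.

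\emph{Normalisation.} First I orient every curve arbitrarily. At a tangency point $p$ of $a$ and $b$, the curve $b$ locally lies on one side of $a$ (left or right) and vice versa. The directions of $a$ and $b$ at $p$ are either equal or opposite. This gives a constant number of \emph{tangency types}. By pigeonholing I keep only the edges of the most popular type, and I also randomly 2-colour the curves and keep only bichromatic edges. This loses a constant factor, so I may assume $G$ is bipartite with classes $A,B$ and all tangencies have the same type.

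\emph{Key step.} Suppose $a_1,a_2\in A$ are both tangent to $b_1,\dots,b_t\in B$. Each $b_i$ meets each $a_j$ exactly once, and the $b_i$ meet each other exactly once; this is the point where ``precisely'' matters. Order the $b_i$ by the positions $p_{i,1}$ of their touching points along $a_1$. Since $a_1$ and $a_2$ share exactly one point $q$, their union splits the plane in a controlled way: $a_1\cup a_2$ is a tree-like set, a ``cross'' or a ``touching pair'', which has one face. Hence each $b_i$ is determined combinatorially by which of the four (or fewer) half-arms of $a_1\cup a_2$ carry $p_{i,1}$ and $p_{i,2}$, and by the side from which $b_i$ approaches. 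By pigeonhole (and Erd\H{o}s--Szekeres on the orders of the $p_{i,1}$ along $a_1$ and the $p_{i,2}$ along $a_2$), I can find three curves $b_i,b_j,b_k$ with identical combinatorial data and monotone order on both arms. Take the portions of $b_i$ and $b_j$ between their touching points with $a_1$ and $a_2$. Together with the relevant pieces of $a_1,a_2$, these bound closed regions. I then want a Jordan-curve argument: the middle curve $b_j$ is trapped inside a region bounded by pieces of $b_i,b_k,a_1,a_2$. Since tangencies are not crossings, and $b_j$ may cross each of $b_i,b_k$ only once, $b_j$ cannot both touch $a_1$ and $a_2$ on the prescribed sides and meet $b_i$ and $b_k$ exactly once each. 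One of these intersections is forced to be zero or at least two, a contradiction. Combining this with the monotonicity extracted above gives $K_{2,t}$-freeness for a fixed $t$ (Erd\H{o}s--Szekeres needs $t$ about $5$ times the number of combinatorial classes).

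\emph{Main obstacle.} The main obstacle is this topological case analysis. The curves $b_i$ may wander arbitrarily far and wind around endpoints of $a_1,a_2$, and the region argument must account for where the endpoints of $b_i,b_j,b_k$ lie. My first attempt would be to use the precisely-$1$-intersecting property more cleverly. The curves $b_i,b_j$ must meet somewhere, and I would choose the closed curve formed by $b_i$, $b_k$, and arcs of $a_1,a_2$ so that $b_j$'s endpoints are on a known side. If that fails I would fall back to forbidding $K_{2,t}$ only in an ordered sense and use an ordered version of K\H{o}v\'ari--S\'os--Tur\'an, which still yields the $n^{3/2}$ bound.
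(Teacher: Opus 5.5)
\textbf{There is a genuine gap.} Your key step is false: tangencies of a single type among precisely $1$-intersecting curves do not exclude $K_{2,t}$ for any fixed $t$. Here is a counterexample.
\begin{itemize}
\item Let $a_1,a_2$ be segments crossing at $o$, let $W$ be one of the four angles they form, and let $C_1,\dots,C_t$ be circles inscribed in $W$ whose centres lie on its bisector at increasing distances from $o$.
\item Let $\gamma_i$ be the arc of $C_i$ facing $o$ between its tangency points $p_{i,1}\in a_1$ and $p_{i,2}\in a_2$, prolonged slightly past $p_{i,1}$ along $C_i$. These arcs are nested and pairwise disjoint, and every ray from $o$ inside $W$ crosses each of them exactly once.
\item Continue $\gamma_i$ beyond $p_{i,2}$ by an arc $\delta_i\subset W$. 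It consists of a short connector, inside the strip between $\gamma_i$ and $\gamma_{i+1}$, to the ray from $o$ at angle $\phi_i$, followed by an outward segment of that ray past $\gamma_t$. Take $\phi_1<\dots<\phi_t$ increasing towards $a_2$.
\item Then the $\delta_i$ are pairwise disjoint and avoid $a_1\cup a_2$. Moreover, $\delta_i$ meets only $\gamma_{i+1},\dots,\gamma_t$, crossing each once.
\end{itemize}
Hence $a_1,a_2$ and $b_i=\gamma_i\cup\delta_i$ form a precisely $1$-intersecting family, since $b_i\cap b_j=\delta_i\cap\gamma_j$ for $i<j$. In it, every $b_i$ touches both $a_1$ and $a_2$.

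Orient $a_1$ towards $o$, $a_2$ away from $o$, and each $b_i$ from $a_1$ to $a_2$. Then all $2t$ tangencies have the same type, and all touching points lie on the same half-arms and on the same sides. The orders along $a_1$ and $a_2$ are monotone (in fact reversed with respect to each other). This is exactly the configuration your Jordan-curve argument is meant to rule out. Your own remark that $a_1\cup a_2$ has connected complement explains why it cannot: $b_j$ is trapped nowhere, and it meets $b_i$ and $b_k$ once each without difficulty. So the normalised graph you describe can contain $K_{2,t}$, and K\H{o}v\'ari--S\'os--Tur\'an has nothing to act on.

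\textbf{What is missing.} What is actually forbidden is an \emph{ordered} pattern, and only after all curves are oriented consistently relative to a common ground curve. That is where the paper uses the hypothesis. Fix $c_0\in\C$. Every other curve meets $c_0$ exactly once, so cutting each curve at that point yields at most $2n$ curves grounded at $c_0$, plus at most $n-1$ tangencies with $c_0$. Corollary~\ref{cor:1-grounded} then applies. Its proof works as follows:
\begin{itemize}
\item it orients the curves away from their ground curves;
\item it uses Propositions~\ref{prop:no-plus-plus} and~\ref{prop:abc-abc} to show that no three curves touch two curves of the other class in the same order;
\item it invokes the intersection-reverse bound of Theorem~\ref{thm:intersection-reverse};
\item it finishes with divide and conquer along the ground curve.
\end{itemize}
Your fallback, an ``ordered K\H{o}v\'ari--S\'os--Tur\'an'', would have to be precisely that intersection-reverse theorem, which is a recent and nontrivial result rather than a routine variant. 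You would also still need a device such as the grounding, both to define consistent orders and to prove that the same-order pattern is impossible. With arbitrary orientations and no common curve, your outline has no such argument.
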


Note that the maximum number of tangencies among $n$ ($x$-monotone) $1$-intersecting curves is $\Omega(n^{4/3})$.
Indeed, it is not hard to construct a family admitting that many tangencies based on the famous construction of Erd\H{o}s (see~\cite{pachbook}) of $n$ lines, $n$ points and $\Theta(n^{4/3})$ point-line incidences (see the proof of Theorem~\ref{thm:grounded-x-mon} below).

For $x$-monotone $1$-intersecting curves we have the following bound.
\begin{restatable}{theorem}{xmonThm}\label{thm:x-mon}
	Every family of $n$ $1$-intersecting $x$-monotone planar curves admits $O\left(n^{4/3}(\log n)^{1/3}\right)$ tangencies.
\end{restatable}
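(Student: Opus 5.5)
We plan to reduce the problem to an incidence-type bound between the curves and a Zarankiewicz-type extremal graph statement. The bound $n^{4/3}(\log n)^{1/3}$ suggests Szemerédi--Trotter-type counting, where a logarithmic loss comes from a divide-and-conquer on $x$-coordinates. At each tangency point $p$ between $x$-monotone curves $a$ and $b$, one curve lies locally above the other; call it the upper curve. Because the family is $1$-intersecting, the pair $(a,b)$ meets only at $p$. Hence to the left of $p$ (within the common $x$-range) $a$ stays above $b$, and to the right of $p$ it also stays above $b$. So on their common $x$-interval the two curves are ordered, and they touch only at $p$.

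\emph{Divide and conquer.} Sort all $2n$ endpoint $x$-coordinates and pick a vertical line $\ell$ at the median. Curves lying entirely to one side are handled recursively, with at most $n$ curves on each side. It then remains to bound tangencies that involve at least one curve crossing $\ell$. If we show that these number $O(n^{4/3})$ (or $O(m^{4/3})$, with $m$ the number of curves crossing $\ell$), the recursion $T(n)=2T(n/2)+O(n^{4/3})$ would even give $O(n^{4/3})$. The logarithm suggests the cross term is instead of the form $O(n^{4/3})$ summed over $\log n$ levels with unbalanced sizes, or that a Hölder-type step loses $(\log n)^{1/3}$. Concretely, we split each crossing curve at $\ell$ into a left half and a right half. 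These halves are \emph{grounded} on $\ell$: their endpoints lie on the common vertical line. Tangencies among crossing curves then reduce to tangencies among grounded $x$-monotone $1$-intersecting families, and we use (or prove first) the $O(m^{4/3})$ bound for the grounded case, announced in the abstract as Theorem~\ref{thm:grounded-x-mon}.

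\emph{Grounded case.} For grounded curves we use the order of the curves along $\ell$. Each tangency between $a$ and $b$ forces the upper curve to be above at $\ell$ as well, since the pair is ordered on its common interval. We then encode the tangency graph as a graph with forbidden configurations. Two curves $a,b$ that are both tangent to two curves $c,d$ in an interleaved pattern would force an extra intersection; this yields a forbidden ordered $K_{2,2}$-like pattern, or the forbidden subgraph from the Erdős--Simonovits-type extension mentioned in the abstract. The extremal bound $O(m^{4/3})$ for such ordered/forbidden-pattern bipartite graphs (the $C_4$-with-order analogue, as in point-line incidences) then gives the grounded estimate.

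\emph{Mixed tangencies.} The hardest step is tangencies between a curve crossing $\ell$ and a curve lying entirely on one side. These are not captured by the grounded reduction. Here we apply a further partition: the curves on the left side are charged by their right endpoints, sorted by $x$-coordinate. We then split into dyadic blocks of curves and apply a cutting/weighted version of the grounded bound in each block. Balancing the block sizes by Hölder's inequality is where we expect the factor $(\log n)^{1/3}$ to appear, giving $O(n^{4/3}(\log n)^{1/3})$ overall. Making this charging rigorous, and in particular verifying that each piece still satisfies the forbidden-pattern hypothesis, is the main obstacle.
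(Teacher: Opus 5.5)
\textbf{There is a genuine gap, and it sits where the whole difficulty lies.}

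The median-line divide and conquer does handle tangencies between two curves that both cross $\ell$. Their halves are grounded at $\ell$, so Theorem~\ref{thm:grounded-x-mon} applies, after reflecting the left halves. Citing that theorem is logically fine, since the paper proves it independently.

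The tangencies between a curve crossing $\ell$ and a curve lying entirely on one side are not covered by any grounded statement, because only one curve of such a pair is grounded. For these you offer only ``dyadic blocks'' and a H\"older step, with no actual argument, as you acknowledge.

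This is not a routine step. If the mixed term were $O(n^{4/3})$, your recursion $T(n)\le 2T(n/2)+O(n^{4/3})$ would give $O(n^{4/3})$. That would be stronger than the theorem you are proving and would settle a question the paper leaves open. If you attack the mixed pairs with a cutting, the curves that stop before $\ell$ have no order along $\ell$ relative to the grounded ones. So the linear bound of Lemma~\ref{lemma:blues-above-reds} for long--long pairs is not available in any evident way. You fall back on the $\Theta(m\log m)$ bound for bi-infinite curves, which is precisely where a logarithmic loss enters.

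Separately, your sketch of the grounded case does not work as written. Excluding a $K_{2,2}$-type configuration yields only $O(m^{3/2})$. You name no specific ordered pattern whose extremal function is $O(m^{4/3})$, and you do not show that tangency graphs avoid one.

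\textbf{What the paper does instead.} It avoids the grounded reduction entirely and inducts on $n$ using the cutting lemma (Theorem~\ref{thm:cutting-lemma}) with $r=n^{1/3}/(\log n)^{2/3}$. This gives $O(r^2)$ trapezoids, each met by at most $n/r$ curves. Inside a trapezoid:
\begin{itemize}
\item Curves with no endpoint in the interior (``long'' curves) behave like bi-infinite curves. Long--long tangencies therefore number $O(m_i\log m_i)$ by Theorem~\ref{theorem:bi-infinite}.
\item For short--long tangencies, Lemma~\ref{lem:long-short} shows that no two curves can touch the same four long curves from below. The bipartite tangency graph is thus $K_{4,2}$-free, and K\H{o}v\'ari--S\'os--Tur\'an gives $O(n_i\sqrt{m_i}+m_i)$.
\item Short--short tangencies are handled by the induction hypothesis, since $n_i\le n/r$.
\end{itemize}
Summing over trapezoids, the long--long terms contribute $O(nr\log n)$ and the short--long terms $O(n^{3/2}r^{-1/2})$. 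The choice of $r$ balances these at $O(n^{4/3}(\log n)^{1/3})$. The inductive term is $O(n^{11/9}(\log n)^{5/9})$ and is absorbed. So the $(\log n)^{1/3}$ comes from trading the tight bi-infinite $\Theta(m\log m)$ bound against the $K_{4,2}$-free short--long bound, not from a H\"older step.
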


This bound almost matches the above-mentioned $\Omega(n^{4/3})$ lower bound and improves by an $O\left((\log n)^{1/3}\right)$ factor a previous bound that follows from a result of Pach and Sharir~\cite{Pach-Sharir}.
For so-called \emph{grounded} $x$-monotone $1$-intersecting curves we have an asymptotically tight bound.

\begin{restatable}{theorem}{groundedXmonThm}\label{thm:grounded-x-mon}
	The maximum number of tangencies among $n$ $x$-monotone $1$-intersecting planar curves whose left endpoints lie on a common vertical line is $\Theta(n^{4/3})$.
\end{restatable}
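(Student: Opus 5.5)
The lower bound will come from Erd\H{o}s's point--line configuration, and the upper bound from a forbidden-subgraph argument on the tangency graph.

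\emph{Lower bound.} Start from Erd\H{o}s's construction of $n$ points and $n$ lines with $\Theta(n^{4/3})$ incidences. By an affine change of coordinates we may assume that no line is vertical and that all points lie strictly to the right of the line $x=0$.
\begin{itemize}
\item Each line is replaced by the segment of it lying to the right of $x=0$. These segments are grounded, $x$-monotone, and pairwise intersect at most once.
\item Each point $p$ is replaced by a tiny strictly convex arc $c_p$ around $p$, for example a small piece of a parabola. Its leftmost point is continued to the left along its tangent line until it reaches $x=0$. The result $\hat c_p$ is the graph of a convex function, so it is $x$-monotone and grounded.
\item Each line through $p$ is translated by a tiny amount so that it becomes tangent to $c_p$. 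The tangency points are distinct because the slopes are distinct, and $c_p$ is chosen wide enough, in the sense of tangent slopes, to accommodate all of them.
\end{itemize}
Since every tangent line of a convex function lies weakly below its graph, each translated line touches $\hat c_p$ exactly once and does not cross it. A line far from $p$ misses the tiny arc and meets the straight part of $\hat c_p$ at most once. Two curves $\hat c_p,\hat c_q$ are tiny perturbations of two rays, so for small enough arcs they meet at most once; this step needs a short genericity argument. We get $2n$ curves with $\Theta(n^{4/3})$ tangencies.

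\emph{Upper bound.} Consider the bipartite graph $G$ that records each tangency as an edge between the upper curve and the lower curve at that point. The $x$-monotone and $1$-intersecting assumptions make this well defined. The plan is to show that $G$ avoids a configuration of the type whose extremal number is $O(n^{4/3})$, in the spirit of $\mathrm{ex}(n,C_6)$ or of theta graphs $\theta_{3,t}$, which are exactly the kind handled by the Erd\H{o}s--Simonovits type theorem mentioned in the abstract. The key steps, in order:
\begin{enumerate}
\item Use groundedness, that is, left endpoints on a common vertical line, to get a strong ordering structure. Along the line $x=0$ the curves are linearly ordered. For a curve $b$, the curves touching $b$ from above at increasing $x$-coordinates are forced to appear in a monotone order on $x=0$, since two of them cannot swap order without crossing $b$ or each other a second time.
\item Use this ordering to show that $G$ contains no ordered copy of a fixed bipartite graph $H$: I would aim for a $C_6$ or $\theta_{3,t}$ with prescribed vertex orderings. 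A copy would force two curves to meet twice, or three curves to share a point. The argument is by a case analysis on the relative left-to-right positions of the tangency points and the vertical order at $x=0$.
\item Conclude with the graph-theoretic extension of Erd\H{o}s--Simonovits proved in the paper, or, failing that, with a Bondy--Simonovits style even-cycle bound. This gives $|E(G)|=O(n^{4/3})$.
\end{enumerate}

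The main obstacle is step~2: identifying exactly which ordered subgraph is forbidden and verifying it topologically. If that proves too messy, I would try a fallback. I would extend the curves to the right into bi-infinite $x$-monotone pseudolines, which groundedness makes plausible, for instance via a sweep. I would then convert each tangency into a point--pseudoline incidence by a small local perturbation, and apply the Szemer\'edi--Trotter bound for pseudolines, which follows from the crossing lemma.
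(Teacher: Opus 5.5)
\textbf{Upper bound.} Your lower bound is essentially the paper's construction, but the upper bound is not proved: each route you propose is either unspecified or cannot work.

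Step~2 names no forbidden configuration, and the natural unordered candidates are not forbidden at all. The tangency graph of your own lower-bound family is the incidence graph of Erd\H{o}s's configuration. That graph contains $6$-cycles (three configuration points pairwise joined by three of the lines). For each fixed $t$ and large $n$ it also contains copies of $\theta_{3,t}$. So at best some \emph{ordered} pattern could be excluded, and Bondy--Simonovits or Faudree--Simonovits bounds do not transfer to ordered patterns.

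Theorem~\ref{thm:spare-neighborhoods-ub} does not help either. Its exponent $2-\frac{\alpha}{\alpha+1}$ exceeds $4/3$ for every admissible $\alpha<2$, which is why the paper only gets $n^{5/3}$ from it. Reaching $4/3$ would require the sub-bineighborhoods of \emph{every} pair of curves to span $O(1)$ edges, and nothing in your plan provides that.

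Step~1 is also false as stated. Suppose $a_1,a_2$ touch $b$ from above at $x_1<x_2$. Then $a_1$ may start above $a_2$ on $x=0$, cross it, touch $b$, and end before $x_2$. Monotonicity holds only for curves that reach $x_2$.

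\textbf{The fallback.} Suppose grounded curves could always be extended to bi-infinite $x$-monotone $1$-intersecting curves with all tangencies preserved. Then Theorem~\ref{thm:bi-infinite} would give $O(n\log n)$ tangencies, contradicting the $\Omega(n^{4/3})$ lower bound. The obstruction is that a touching pair can never swap. A curve that must stay above a curve it touched and below a curve it already crossed cannot be continued past the point where those two cross. If instead tangencies become crossings of a pseudoline arrangement, each resulting point lies on only two curves. Szemer\'edi--Trotter with $T$ such points then says nothing about $T$.

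\textbf{What the paper does instead.} It first splits the curves by their order on $\ell$ into a lower (red) half and an upper (blue) half, which gives $t(n)\le 2t(n/2)+O(n^{4/3})$. It then bounds red--blue tangencies by $dn^{4/3}$ (Theorem~\ref{thm:red-blue-x-mono-grounded}), using the cutting of Theorem~\ref{thm:cutting-lemma} with $r=n^{1/3}$:
\begin{itemize}
\item short--short pairs are handled by induction;
\item long--short pairs by Lemma~\ref{lem:long-short} and K\H{o}v\'ari--S\'os--Tur\'an;
\item long--long pairs by Lemma~\ref{lemma:blues-above-reds}, which gives a \emph{linear} bound.
\end{itemize}
The long--long bound is linear because the truncated long curves, extended by horizontal rays, satisfy ``every red curve starts below every blue curve''. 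This is exactly where groundedness removes the $\log$ factor that appears in the general $x$-monotone case.

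\textbf{Lower bound.} One detail also needs fixing. A line of the configuration passes through many points, so ``translate it until it is tangent to $c_p$'' must be a single translation that serves all of them. This works if every $c_p$ is a translate of one parabola $y-p_y=(x-p_x)^2/(4\delta)$. The tangent of slope $s$ is then $y-p_y=s(x-p_x)-\delta s^2$, so every line of slope $s$ is lowered by $\delta s^2$, independently of $p$. The paper sidesteps this issue by redrawing the lines only inside small disjoint disks $D_p$.
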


The proofs of the upper bounds in Theorems~\ref{thm:x-mon} and~\ref{thm:grounded-x-mon} rely on bounds for \emph{bi-infinite} $x$-monotone curves.
\begin{theorem}\label{thm:bi-infinite}
	The maximum number of tangencies among $n$ bi-infinite $x$-monotone $1$-intersecting curves is $\Theta(n\log n)$. In case of precisely $1$-intersecting curves the maximum number of tangencies is $n-1$.
\end{theorem}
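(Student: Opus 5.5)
Throughout, fix a family of $n$ bi-infinite $x$-monotone $1$-intersecting curves. Each curve is the graph of a function $f_i$ over $\mathbb{R}$. Two curves that meet do so exactly once, either crossing or touching. If they are disjoint, one lies entirely above the other.

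\emph{Precisely $1$-intersecting case, upper bound $n-1$.} I would define a graph $G$ on the curves, joining two curves when they are tangent. I claim $G$ is a forest, which gives at most $n-1$ edges. Suppose $f_1,\dots,f_k$ form a cycle in $G$. At $x\to-\infty$ the curves appear in some vertical order, and the order at $x\to+\infty$ is obtained from it by swapping exactly the crossing pairs. Tangent pairs keep their relative order, and since the family is precisely $1$-intersecting, every non-tangent pair crosses. The permutation from left order to right order therefore reverses every pair except the tangent ones. Equivalently, the inversion set of this permutation is the complement of $E(G)$. The complement of an inversion set is itself an inversion set (of the reversed permutation), so $E(G)$ must be transitively closed in the permutation-graph sense. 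I would then argue that a cycle of tangencies of length $\ge 3$ forces a chord that is also a tangency, and finally a triangle of mutual tangencies. For a triangle $a$ above $b$ above $c$ on both sides, with all three pairs touching, I would analyse the order of the three touching points along $x$. Since $a\ge b\ge c$ everywhere, the point where $a$ touches $c$ must also lie on $b$, which is a triple point and is forbidden. Hence $G$ is triangle-free. A triangle-free comparability-type structure may still contain $4$-cycles, though, so instead I would show directly that $G$ is a permutation graph with no induced cycle, i.e.\ chordal and triangle-free, hence a forest. For the lower bound I would build a path: take lines of slopes $1,\dots,n$ and bend consecutive pairs into touching.

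\emph{General $1$-intersecting case, $O(n\log n)$.} Here non-adjacent pairs may also be disjoint, so the permutation argument gives no control. Instead I would use divide and conquer on the vertical order at $x=-\infty$. Split the curves into a top half $T$ and a bottom half $B$. The tangencies inside $T$ and inside $B$ are handled recursively. For the tangencies between $T$ and $B$, I would consider the lower envelope of $T$ and the upper envelope of $B$. I would aim to show that a $T$–$B$ tangency must occur on these envelopes, or else argue through a charging scheme that the bipartite tangency graph between $T$ and $B$ is a forest and so has $O(n)$ edges. This yields the recurrence $t(n)\le 2t(n/2)+O(n)$, so $t(n)=O(n\log n)$. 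This step is the main obstacle: a tangency between $T$ and $B$ can be hidden below other curves, and I would try to prove the forest property by the same left/right order argument restricted to pairs from $T\times B$.

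\emph{Lower bound $\Omega(n\log n)$.} I would mirror the recursion. Take two copies of the construction on $n/2$ curves, squeezed into thin horizontal strips, with one strip above the other. Then deform them so that the two bundles meet, adding about $n/2$ tangencies between the bundles, for instance matching each curve to its counterpart through a shallow dip. This gives $t(n)\ge 2t(n/2)+\Omega(n)$.
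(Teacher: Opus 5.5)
Both upper bounds have genuine gaps: the $C_4$ case in the precisely $1$-intersecting argument, and the $O(n)$ merge step in the general case.

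\textbf{Precisely $1$-intersecting case.} Your permutation observation is correct: $G$ is the non-inversion graph of the left-to-right permutation, hence a permutation graph. The triangle argument is also fine. But the claim that every cycle forces a tangent chord is false for $4$-cycles. Take top-to-bottom orders $b_1,b_2,r_1,r_2$ at $-\infty$ and $b_2,b_1,r_2,r_1$ at $+\infty$; the non-inversions are exactly the four pairs $r_ib_j$. So order data alone can never exclude $C_4$, and your fallback (``show directly that $G$ is chordal'') only restates the goal. A geometric argument is needed.

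The paper first observes that no curve touches one curve from below and another from above, since those two would then be disjoint. So $G$ is bipartite into red curves (touching from below) and blue curves. In a hypothetical cycle it takes the blue curve $b$ that starts lowest, and the red $r$ touching $b$ at its right cycle-tangency $b_R$. It gets a contradiction in both cases: if $b_R=r_R$, another blue curve of the cycle starts below $b$; if $b_R=r_L$, two curves of the cycle are disjoint.

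Your route can also be completed:
\begin{itemize}
\item Transitivity of non-inversions plus triangle-freeness gives the same red/blue split.
\item A $4$-cycle then consists of red $r,r'$ touching blue $b,b'$ from below. Their touching points on $b$ lie where $b<b'$, and those on $b'$ where $b'<b$, so on opposite sides of $b\cap b'$. But $r,r'$ must cross between their two touching points on each blue curve, so they would cross twice.
\item Permutation graphs have no induced cycles of length $\ge 5$. Combined with bipartiteness and $C_4$-freeness, a shortest cycle cannot exist, so $G$ is a forest.
\end{itemize}

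\textbf{General case.} Your divide-and-conquer by the order at $-\infty$ and the recurrence match the paper. However, the $O(n)$ bound on $T$--$B$ tangencies is the whole content of the upper bound, and you leave it open. Neither suggested route works.
\begin{itemize}
\item Cross tangencies need not lie on the envelopes. Take $b\colon y=0$ in $B$ and $t\colon y=|x|$, $t'\colon y=-x-1$ in $T$. Then $t$ touches $b$ at the origin, while $t'$ lies below $t$ at $x=0$, so this tangency is not on the lower envelope of $T$.
\item The order at $\pm\infty$ restricted to $T\times B$ only records which pairs cross. Tangent and disjoint pairs both keep $t$ above $b$ at both ends, so no forest property can come from it.
\end{itemize}

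The paper's key lemma is a marking argument. Suppose all red curves start below all blue ones, and mark on every curve its leftmost red--blue tangency. At most $n-1$ points are marked, since the overall leftmost one is marked on both of its curves. The claim is that every tangency is marked. If the tangency $p$ of $r$ and $b$ were not, take blue $b'$ touching $r$ at $q$ and red $r'$ touching $b$ at $z$, both left of $p$; say $q$ is left of $z$.
\begin{itemize}
\item A red curve lies below every blue curve it touches. Hence $r'$ crosses $r$ between $z$ and $p$, and so $r'$ starts above $r$.
\item Now $b'$ starts above $r'$ but is below it at $q$. So $b'$ crosses $r'$ before $q$ and stays below it afterwards.
\item At $r\cap r'$, therefore, $b'$ is below $r$, contradicting that $b'$ touches $r$ from above.
\end{itemize}

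\textbf{Lower bounds.} These sketches also need repair.
\begin{itemize}
\item A crossing pair cannot be bent locally into a tangent pair, because tangent pairs keep their order at both ends. The paper simply uses the star $y=0$, $y=|x-i|$.
\item For $\Omega(n\log n)$, a dip from $t_i$ down to $b_i$ and back crosses every curve lying between them twice. The $\approx n/2$ cross tangencies must instead come from a monotone merge. For example, for $j=1,\dots,m$, let $b_j$ rise permanently through $t_m,\dots,t_{j+1}$ and then touch $t_j$ from below.
\end{itemize}
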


The bound for bi-infinite $x$-monotone $1$-intersecting curves was already proved by Pach and Sharir~\cite{Pach-Sharir} using different and less elementary arguments.
The following table summarizes all the above-mentioned results. 



\begin{table}[h]
\centering
	\begin{tabular}{|c|c|c|}
		\hline
		&  $x$-monotone & arbitrary \\ \hline
		bi-infinite $1$-intersecting & $\Theta(n \log n)$ \cite{Pach-Sharir} (\autoref{sec:infinite-mon}) & --- \\ \hline
		bi-infinite precisely $1$-intersecting & $n-1$ (\autoref{sec:infinite-mon}) & --- \\ \hline
		grounded $1$-intersecting & $\Theta(n^{4/3})$ (\autoref{sec:grounded-mon}) & $\Omega(n^{4/3})$, $O(n^{3/2})$ (\autoref{sec:1-int}) \\ \hline
		$1$-intersecting & $\Omega(n^{4/3})$ \cite{Pach-Sharir}, $O(n^{4/3}(\log n)^{1/3})$ (\autoref{sec:mon}) & $\Omega(n^{4/3})$, $O(n^{5/3})$ (\autoref{sec:1-int})\\ \hline
		precisely $1$-intersecting & 	$\Theta(n)$ \cite{x-mon} & $\Omega(n)$, $O(n^{3/2})$ (\autoref{sec:1-int}) \\ \hline
	\end{tabular} 
	\caption{Bounds on the maximum number of tangencies among $n$ $1$-intersecting curves.}
\end{table}

\paragraph{A graph-theoretic theorem.}
For a graph $H$ let $\ex(n,H)$ denote the maximum number of edges in an $n$-vertex \emph{$H$-free} graph, that is, a graph which does not contain $H$ as a subgraph.
For a bipartite graph $H=\left(A\cup B,E\right)$ we denote by $H^+$ the graph we get by adding to $H$ two new adjacent vertices $a'$ and $b'$ and connecting $a'$ to every vertex in $B$ and $b'$ to every vertex in $A$.
That is, 
$H^+ = \left((A\cup \{a'\}) \cup (B \cup \{b'\}), E \cup \{(a',b')\} \cup \{(a',b) \mid b \in B\} \cup \{(a,b') \mid a \in A\} \right)$.
Erd\H{o}s and Simonovits~\cite{ES70} proved the following result\footnote{We present a simplified version of their result from which the original version can be easily deduced.} when studying $\ex(n,Q_3)$, where $Q_3$ is the graph with $8$ vertices and $12$ edges that corresponds to a three-dimensional cube.

\begin{theorem}[{\cite[Theorem 2]{ES70}}]\label{thm:erdos-simonovits}
	Let $H$ be a bipartite graph with at least one edge such that $\ex(n,H)=O(n^{2 -\alpha})$ for some constant $0\le\alpha \le 1$.
	Then $\ex(n,H^+) = O\left(n^{2-\frac{\alpha}{\alpha+1}}\right)$.
\end{theorem}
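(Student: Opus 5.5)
We prove the Erd\H{o}s--Simonovits bound by counting, in a dense graph, the copies of $H$ inside a common neighbourhood of an edge. Let $G$ be an $n$-vertex graph with $e = Cn^{2-\beta}$ edges, where $\beta=\frac{\alpha}{\alpha+1}$ and $C$ is large. We will show that $G$ contains $H^+$. First we pass to a bipartite subgraph with at least $e/2$ edges, with sides $X$ and $Y$. A copy of $H^+$ is then obtained from an edge $xy$, with $x\in X$ and $y\in Y$, together with a copy of $H$ placed with $A\subseteq N(y)\subseteq X$ and $B\subseteq N(x)\subseteq Y$. Here $x$ plays the role of $a'$ and $y$ plays the role of $b'$. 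We also need the vertices of the copy of $H$ to avoid $x$ and $y$; this costs only $O(1)$ in what follows.

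Define for each edge $xy$ the bipartite graph $G_{xy}=G[N(y),N(x)]$. Its edges $uv$, with $u\in N(y)$ and $v\in N(x)$, correspond exactly to paths $x\!-\!v\!-\!u\!-\!y$ closed by the edge $xy$, that is, to $4$-cycles of $G$ through $xy$. So it suffices to find an edge $xy$ for which $e(G_{xy}) > c\cdot \ex(|N(x)|+|N(y)|, H)$. The plan is as follows.
\begin{enumerate}[label=(\roman*)]
\item \textbf{Count.} We have $\sum_{xy\in E} e(G_{xy}) = 4\cdot\#C_4$, since each $4$-cycle is counted once for each of its four edges. By the standard Cauchy--Schwarz/K\H{o}v\'ari--S\'os--Tur\'an counting, $\#C_4 \gtrsim e^4/n^4$ once $e\gg n^{3/2}$. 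Our $e$ is indeed at least this large, because $\beta\le 1/2$.
\item \textbf{Regularize degrees.} Apply the Erd\H{o}s--Simonovits regularization lemma, or a simple dyadic pigeonhole on degrees, to obtain a subgraph on $m$ vertices. In it every degree is within a constant factor of $d$, and $d \gtrsim m^{1-\beta}$. The degrees $|N(x)|$ and $|N(y)|$ are then all $\Theta(d)$.
\item \textbf{Pick a good edge.} Averaging over the edges of the regular subgraph, some edge $xy$ satisfies
\[
e(G_{xy}) \gtrsim \frac{(md)^4/m^4}{md} = \frac{d^3}{m}.
\]
Since $G_{xy}$ has $O(d)$ vertices, $H$-freeness would force $e(G_{xy})\lesssim d^{2-\alpha}$. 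So we need $d^3/m \gg d^{2-\alpha}$, i.e.\ $d^{1+\alpha}\gg m$, i.e.\ $d \gg m^{1/(1+\alpha)} = m^{1-\beta}$. This is exactly the density provided by step (ii), with a large constant coming from $C$.
\end{enumerate}
Hence $G_{xy}$ contains $H$. Removing the at most $O(d)$ edges incident to $x$ or $y$ does not change the conclusion. This yields $H^+$.

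The main obstacle is step (ii). Without near-regularity the averaging in step (iii) breaks down: high-degree vertices inflate the number of vertices of $G_{xy}$ relative to its edge count. The tool for this is the Erd\H{o}s--Simonovits regularization theorem, which guarantees that a graph with $\ge Cn^{2-\beta}$ edges has an $m$-vertex $K$-almost-regular subgraph with $\ge C' m^{2-\beta}$ edges, where $m\to\infty$. Failing that, a direct dyadic decomposition of $G$ by the degrees of both endpoints, losing only constants, also works. The hypothesis that $H$ has at least one edge ensures $\alpha\le 1$ and that the exponent arithmetic is nondegenerate.
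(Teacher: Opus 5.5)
Your proof is correct. Its core is the same double count the paper uses in Theorem~\ref{thm:f-sparse-neighborhoods}(ii), which is how the paper obtains this statement via Theorem~\ref{thm:spare-neighborhoods-ub}. On one side is a Cauchy--Schwarz lower bound of order $d^4$ on the number of $C_4$'s in a near-regular graph. On the other is an upper bound of (number of edges)$\cdot\ex(O(d),H)$, coming from the bipartite graphs between $N(y)\setminus\{x\}$ and $N(x)\setminus\{y\}$ over edges $xy$.

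The genuine difference is the regularization step.
\begin{itemize}
\item \textbf{Your route.} You pass to an actual subgraph via the Erd\H{o}s--Simonovits almost-regular subgraph lemma, so $H^+$-freeness is inherited automatically. The price is that you need the lemma in the form where the constant scales. The Jiang--Seiver version gives this: $e(G')\ge\frac{2C}{5}m^{2-\beta}$ with $K$ depending only on $\beta$, valid for $\alpha>0$ (the case $\alpha=0$ is trivial). This is the hard part of the classical ES70 proof.
\item \textbf{The paper's route.} It splits every vertex of degree $>d$ into $\lfloor\deg(v)/d\rfloor$ vertices of degree $d$ plus a remainder. It then repeatedly deletes vertices of degree $<d/8$. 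This is elementary and loses only constants.
\end{itemize}

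Splitting does \emph{not} preserve $H^+$-freeness, since two copies of one original vertex may both be used. This is exactly why the paper phrases its hypothesis as sparsity of sub-bineighborhoods of adjacent pairs. A vertex is adjacent to at most one copy of each original vertex. Hence the graph between the open neighbourhoods of an adjacent pair in the split graph embeds injectively into the corresponding graph in $G$. Your argument also only ever uses $H$-freeness of these neighbourhood graphs, so you could have used the splitting trick and avoided the heavy lemma. The paper's formulation additionally yields the more general Theorem~\ref{thm:spare-neighborhoods-ub} for an arbitrary sparsity function.

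One correction: your fallback claim that a direct dyadic decomposition by degrees ``loses only constants'' is not justified. A plain dyadic pigeonhole costs logarithmic factors. It also does not by itself bound the maximum degree of the chosen piece by that piece's average degree. Rely on the regularization lemma, or on the splitting trick, instead.
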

	
Observe that if $G=(A\cup B,E)$ is a bipartite $H^+$-free graph, then for every adjacent vertices $a \in A$ and $b \in B$ it holds that $G[(N_G(a)\setminus \{b\}) \cup (N_G(b)\setminus \{a\})]$ --- the subgraph of $G$ induced by their \emph{open bineighborhood} --- is $H$-free.
Roughly speaking, if for every adjacent vertices in $G$ the subgraph induced by their open bineighborhoods is not too dense because it is $H$-free, then it follows that $G$ is also not too dense.
We extend and refine Theorem~\ref{thm:erdos-simonovits} by showing that it holds whenever we have sparse subgraphs induced by open sub-bineighborhoods of pairs of (adjacent) vertices, be it for $H$-freeness or some other reason.
To state the result we need the following definition:
for a graph $G$ and a nonnegative and nondecreasing function $f:\mathbb{R} \rightarrow \mathbb{R}$ we say that two vertices $u$ and $v$ have \emph{$f$-sparse sub-bineighborhoods}
if for every two disjoint subsets $U \subseteq N_G(u) \setminus \{v\}$ and $V \subseteq N_G(v) \setminus \{u\}$ it holds that $|E(G(U,V))| \le f(|U \cup V|)$, where $G(U,V)$ is the bipartite subgraph $(U \cup V, \{(u,v) \in E(G) \mid u\in U \textrm{ and } v\in V\})$.
For the proof of Theorem~\ref{thm:1-int} we use the following theorem which extends Theorem~\ref{thm:erdos-simonovits} 
and may be of independent interest.

\begin{restatable}{theorem}{KSTub}
	\label{thm:spare-neighborhoods-ub}
	Let $G$ be a graph and let $f(n)=O(n^{2-\alpha})$ for some constant $0 \le \alpha <2$.
	If every pair of vertices in $G$ has $f$-sparse sub-bineighborhoods, then $|E(G)|  = O\left(|V(G)|^{2-\frac{\alpha}{\alpha+1}}\right)$.
	Moreover, if $\alpha \le 1$, then it is enough that every pair of adjacent vertices has $f$-sparse sub-bineighborhoods for this bound to hold.
\end{restatable}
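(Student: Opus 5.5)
The plan is the Erd\H{o}s--Simonovits double-counting argument: count walks of length three and average over pairs of endpoints. Let $n=|V(G)|$, $m=|E(G)|$ and $d=2m/n$, and assume $d\ge C n^{1/(\alpha+1)}$ for a large constant $C$; we aim for a contradiction. The target inequality is
\[
\frac{d^3}{n}\;\lesssim\; f(O(d))\;=\;O\bigl(d^{2-\alpha}\bigr),
\]
which gives $d^{1+\alpha}=O(n)$ and hence $m=O\bigl(n^{1+\frac1{1+\alpha}}\bigr)=O\bigl(n^{2-\frac{\alpha}{\alpha+1}}\bigr)$.

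\textbf{Reduction to an almost-regular graph.} The right-hand side above is controlled by $|U\cup V|\le \deg u+\deg v$, so I need degrees comparable to $d$. I will first pass to a $K$-almost-regular subgraph, i.e.\ one with maximum degree at most $K$ times the minimum degree. I will use the standard regularization lemma of Erd\H{o}s and Simonovits, as used in their proof of Theorem~\ref{thm:erdos-simonovits}. It says that if $G$ has $n^{1+\gamma}$ edges, then it contains such a subgraph $G'$ on $n'\ge n^{\varepsilon}$ vertices with $\Omega(n'^{1+\gamma'})$ edges, where $\gamma'$ is arbitrarily close to $\gamma$. The hypothesis is hereditary. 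Passing to a subgraph only shrinks neighborhoods and removes edges, so every pair of vertices of $G'$ still has $f$-sparse sub-bineighborhoods, and adjacent pairs in $G'$ are adjacent in $G$. Hence it suffices to prove the bound for almost-regular graphs, where all degrees are $\Theta(d)$. This bookkeeping, namely making sure the exponent loss in regularization does not spoil the final exponent, is the step I expect to need the most care. The cleanest route is to argue by contradiction with $\gamma$ slightly above the claimed exponent.

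\textbf{General case (any pair of vertices).} By Sidorenko's inequality for paths (Blakley--Roy), the number of walks $u,x,y,v$ of length $3$ is at least $n d^3$. Hence some ordered pair $(u,v)$ has at least $d^3/n$ such walks, each giving an edge $xy$ with $x\in N(u)$ and $y\in N(v)$. I discard the walks with $x=v$ or $y=u$, and note that each edge is counted at most twice; this loses only $O(d)$, which is negligible because $d^3/n\gg d$ when $d\gg n^{1/2}$. If $d\le n^{1/2}$, the bound already holds, since $n^{1/2}\le n^{1/(1+\alpha)}$ for $\alpha\le1$. For $\alpha>1$ the target exponent is smaller than $n^{1/2}$, so there the degenerate terms need a slightly finer estimate, but they are still at most $O(d)$ per pair after averaging. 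Next, to get disjoint sets, I assign each vertex of $(N(u)\setminus\{v\})\cup(N(v)\setminus\{u\})$ at random to $U\subseteq N(u)$ or $V\subseteq N(v)$, respecting membership. An edge $xy$ with $x\in N(u)$ and $y\in N(v)$ survives with probability at least $1/4$. So some disjoint choice satisfies $e(G(U,V))\ge c\,d^3/n$. On the other hand $|U\cup V|\le 2Kd$, so $e(G(U,V))\le f(2Kd)=O(d^{2-\alpha})$, which is the target inequality.

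\textbf{Adjacent pairs, $\alpha\le 1$.} Here I instead sum $e(N(u),N(v))$ over ordered adjacent pairs $(u,v)$. This sum counts homomorphic copies of $C_4$, and there are at least $d^4$ of them. Degenerate copies contribute $O(nd^2)$, which is negligible when $d\gg n^{1/2}$. Averaging over the $nd$ ordered edges gives an adjacent pair with $\Omega(d^3/n)$ edges between $N(u)$ and $N(v)$. The same random splitting then gives the same contradiction. The condition $\alpha\le1$ is exactly what guarantees that the relevant range $d\approx n^{1/(1+\alpha)}$ lies above $n^{1/2}$, so that the degenerate $C_4$'s are indeed negligible.
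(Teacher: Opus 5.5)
The core counting is correct, but there is a gap in the regularization step you flagged: as you set it up, it does not deliver the exact bound.

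Your double counting is essentially the paper's. You count non-degenerate $3$-walks over all pairs, or homomorphic $C_4$'s over adjacent pairs, and compare with $f(2Kd)$ per pair. Your random splitting is a fine substitute for the paper's passage to a bipartite subgraph.

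\textbf{The regularization gap.} You state the lemma so that $G'$ has $\Omega(n'^{1+\gamma'})$ edges with $\gamma'$ merely close to $\gamma$. With that form, a contradiction from $|E(G)|\ge n^{1+\gamma}$, with $\gamma$ a fixed amount above $\gamma_0=\frac{1}{1+\alpha}$, only proves $|E(G)|\le n^{1+\gamma_0+\epsilon}$ for every $\epsilon>0$. That is $n^{2-\frac{\alpha}{\alpha+1}+o(1)}$, not the claimed $O(n^{2-\frac{\alpha}{\alpha+1}})$. For the $O$-bound, the large constant $C$ in a hypothetical $|E(G)|\ge Cn^{1+\gamma_0}$ must survive into $G'$. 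Any fixed loss $\eta$ in the exponent kills it, because $n'^{-\eta}\to 0$.

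You need the sharp form of the Erd\H{o}s--Simonovits lemma, which preserves the exponent exactly. In its constant-tracking version, $|E(G)|\ge c\,n^{1+\gamma}$ gives a $K$-almost-regular $G'$ on $n'\to\infty$ vertices with $|E(G')|\ge\frac{2c}{5}n'^{1+\gamma}$. Here $K$ depends on $\gamma$ but not on $c$. Choosing $c$ large against your almost-regular bound $C_0(K)\,n'^{1+\gamma_0}$ gives the contradiction. Equivalently, with the exact-exponent version (whose $K$ and $\varepsilon$ are uniform for $\gamma$ near $\gamma_0$), take $\gamma=\gamma_0+\log C/\log n$. Then $n'\ge n^{\varepsilon}$ gives $n'^{1+\gamma}\ge C^{\varepsilon}n'^{1+\gamma_0}$.

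The paper avoids this machinery with a trick that the sub-bineighborhood formulation makes available. It splits each vertex of degree above $d$ into copies of degree at most $d$. No neighborhood contains two copies of one original vertex, so every sub-bineighborhood of the new graph corresponds to one in $G$. It then repeatedly deletes vertices of degree below $d/8$. This keeps between $n/2$ and $2n$ vertices and at least $m/2$ edges, with all degrees in $[d/8,d]$. There is no exponent or constant bookkeeping at all.

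\textbf{Degenerate walks for $\alpha>1$.} Your treatment does not work as written. In this regime the relevant $d$ is below $n^{1/2}$, so $d^3/n<d$, and a loss of $O(d)$ \emph{per pair} would swamp the main term. The accounting has to be global. Walks with $x=v$ or $y=u$ occur only for adjacent $(u,v)$. Closed walks with $u=v$ must also be discarded, since the hypothesis concerns pairs of distinct vertices; they number at most $\sum_u\deg(u)^2$. So all degenerate walks together number $O(K^2nd^2)$. Hence at least $nd^3/2$ non-degenerate walks remain once $d$ exceeds a constant. Averaging over the $n^2$ pairs then gives a pair with $\Omega(d^3/n)$ of them. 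The paper gets this directly from the minimum degree: each edge $xy$ is the middle edge of $(\deg x-1)(\deg y-1)\ge d^2/256$ non-degenerate walks.
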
	

Clearly, Theorem~\ref{thm:spare-neighborhoods-ub} implies Theorem~\ref{thm:erdos-simonovits}. 
Note that from Theorem~\ref{thm:erdos-simonovits} one can conclude by induction on $t$ the classical K\H{o}v\'ari-S\'os-Tur\'an Theorem~\cite{Kovari} stating that an $n$-vertex $K_{t,t}$-free graph has $O_t\left(n^{2-\frac{1}{t}}\right)$ edges.
It is a major open question in extremal combinatorics whether this bound is tight with positive answers only known for $t=2$ and $t=3$.
Thus, the upper bound in Theorem~\ref{thm:erdos-simonovits} is tight for $\alpha=1$ and $\alpha=\frac{1}{2}$, and, assuming that the bound of the K\H{o}v\'ari-S\'os-Tur\'an Theorem is tight, for every $\alpha=\frac{1}{t}$ where $t$ is a positive integer.
Remarkably, for Theorem~\ref{thm:spare-neighborhoods-ub} we do have a matching lower bound for every $0 \le \alpha \le 1$.

\begin{restatable}{theorem}{KSTlb}
	\label{thm:spare-neighborhoods-lb}
	For every $0 \le \alpha \le 1$ there is a bipartite graph $G=(V,E)$ with $|E|=\Omega\left(|V|^{2-\frac{\alpha}{\alpha+1}}\right)$ edges such that every pair of vertices in $G$ has $f$-sparse sub-bineighborhoods where $f(n)=O(n^{2-\alpha})$.
\end{restatable}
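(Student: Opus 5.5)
The plan is to interpolate between the two extreme cases $\alpha=0$ and $\alpha=1$, where the right graphs are known, by blowing up an incidence graph.

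\textbf{The extreme cases.} For $\alpha=0$ we have $f(n)=O(n^2)$, which imposes no restriction, so the complete bipartite graph $K_{n,n}$ already has $\Omega(|V|^2)$ edges. For $\alpha=1$ the target is $\Omega(|V|^{3/2})$ edges with linear $f$. Here I would take the point--line incidence graph $G_q$ of a projective plane of order $q$: it has $N=2(q^2+q+1)$ vertices and $\Theta(N^{3/2})$ edges. The sparsity check is as follows.
\begin{itemize}
\item If $u,v$ lie on the same side, then $N(u)$ and $N(v)$ lie on the same side of the bipartition, so $G(U,V)$ has no edges.
\item If $u$ is a point $p$ and $v$ is a line $\ell$, then $U$ consists of lines through $p$ other than $\ell$, and $V$ consists of points of $\ell$ other than $p$. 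Each line of $U$ meets $\ell$ in exactly one point, so every vertex of $U$ has degree at most one in $G(U,V)$.
\end{itemize}
In both cases $|E(G(U,V))|\le |U\cup V|$, so every pair of vertices, adjacent or not, has $f$-sparse sub-bineighborhoods with $f(m)=m$.

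\textbf{Interpolation by blow-up.} For $0<\alpha<1$ I would blow up $G_q$: replace every vertex by a block of $t$ vertices, and replace every incidence by a bipartite ``gadget'' $H_t$ between the two blocks. Here $H_t$ should be a bipartite graph on $t+t$ vertices with $\Theta(t^{2-\alpha})$ edges that is hereditarily sparse, meaning every $m$-vertex subgraph has $O(m^{2-\alpha})$ edges. The blown-up graph then has $n=\Theta(q^2t)$ vertices and $\Theta(q^3t^{2-\alpha})$ edges. Choosing $t=q^{1/\alpha}$ (up to constants) should make this $\Theta\left(n^{2-\frac{\alpha}{\alpha+1}}\right)$; this exponent bookkeeping is routine.

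\textbf{Sparsity of the blow-up.} Take $u$ in the block of $p$ and $v$ in the block of $\ell$. By the projective-plane argument, the blocks meeting $U$ and the blocks meeting $V$ are joined, as blocks, by a matching. There are two exceptional blocks: the block of $\ell$ itself, which may lie in $U$, and the block of $p$, which may lie in $V$, and these are joined to everything. For each matched pair of blocks, $G(U,V)$ restricted to it is an induced subgraph of $H_t$. So it has at most $C(x_i+y_i)^{2-\alpha}$ edges, where $x_i,y_i$ are the sizes of the pieces. Summing and using convexity gives $O(m^{2-\alpha})$.

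The two exceptional blocks contribute edges between the $\ell$-copies inside $U$ and all of $V$, which can be as many as $t\cdot m$. To control these, I would also require the gadget to be adjacency-sparse: each vertex of $H_t$ should have $O(t^{1-\alpha})$ neighbours in any block. I would then check that $t^{1-\alpha}m\le m^{2-\alpha}$ holds whenever $m\ge t$; when $m<t$, the hereditary sparsity of $H_t$ applies directly.

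\textbf{Main obstacle.} The crux is the existence of the gadget $H_t$: a $t\times t$ bipartite graph with $t^{2-\alpha}$ edges all of whose subgraphs obey the $m^{2-\alpha}$ bound. For $\alpha=1/2$ a $C_4$-free incidence graph works. For general $\alpha$, however, such a graph would essentially require tight Zarankiewicz bounds.

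I would first try to build $H_t$ recursively as an incidence graph of points and suitably chosen families of curves or flats. Failing that, I would avoid hereditary gadgets altogether. The idea is to make the blocks unbalanced (for instance, blow up only the points, keeping lines single) and choose block sizes so that every bineighborhood pair $(U,V)$ meets at most one dense block pair.

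Checking that some such choice both keeps $f(m)=O(m^{2-\alpha})$ uniformly in $m$ and reaches the edge count $n^{2-\alpha/(\alpha+1)}$ is the step I expect to be hardest.
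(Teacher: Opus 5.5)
Your treatment of $\alpha=0$ and $\alpha=1$ is correct and matches the paper. The case $0<\alpha<1$, however, fails at the step you called routine bookkeeping. No blow-up of the projective plane by hereditarily $O(m^{2-\alpha})$-sparse gadgets can reach the required edge count.

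Such a gadget on $2t$ vertices has $O(t^{2-\alpha})$ edges (take $m=2t$). So your graph has $n=\Theta(q^2t)$ vertices and $|E|=O(q^3t^{2-\alpha})$ edges, whence
\[
\frac{|E|}{n^{2-\frac{\alpha}{\alpha+1}}}=O\Bigl(q^{-\frac{1-\alpha}{1+\alpha}}\,t^{-\frac{\alpha^{2}}{1+\alpha}}\Bigr)\longrightarrow 0 .
\]
Equivalently, $\log|E|/\log n$ is a weighted average of $3/2$ and $2-\alpha$. Both are strictly smaller than $2-\frac{\alpha}{\alpha+1}=1+\frac{1}{1+\alpha}$ when $0<\alpha<1$. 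With your choice $t=q^{1/\alpha}$ you fall short by a factor $q^{1/(1+\alpha)}$. For $\alpha=1/2$, every choice of $q,t$ gives only $n^{3/2}$ edges instead of $n^{5/3}$.

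So the real obstacle is not the existence of $H_t$. Incidentally, $H_t$ does not need tight Zarankiewicz bounds: a random bipartite graph with edge probability $t^{-\alpha}$ is, with high probability, hereditarily $O(m^{2-\alpha})$-sparse with maximum degree $O(t^{1-\alpha})$. The real problem is that hereditary sparseness caps the total at $O(n^{2-\alpha})$ edges. The theorem is precisely about graphs that are much denser globally and sparse only inside sub-bineighborhoods. Since $3/2<1+\frac{1}{1+\alpha}$, the projective-plane layer can only lower the exponent, so all the density would have to come from the gadget itself. Your unbalanced fallback is not specified, and with hereditarily sparse gadgets the same computation (with $t$ replaced by the sum of the block sizes) rules it out.

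The missing idea is suggested by the upper-bound argument. To attain $d^3\lesssim n f(2d)$ you need average degree $d\approx n^{1/(1+\alpha)}$, and typical pairs of neighbourhoods (of size $\approx d$) must span $\approx d^{2-\alpha}$ edges. The random bipartite graph $G(n,n,p)$ with $p=n^{-\alpha/(1+\alpha)}$ does exactly this: $d=np$ and $d^2p=d^{2-\alpha}$. It also has $\Theta(pn^2)=\Theta(n^{2-\frac{\alpha}{\alpha+1}})$ edges with high probability, and this is the paper's construction.

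To verify the property, the paper bounds the expected number of bad tuples $(a,b,A',B')$. These satisfy $B'\subseteq N(a)\setminus\{b\}$, $A'\subseteq N(b)\setminus\{a\}$, and $e(A',B')>q(|A'|+|B'|)^{2-\alpha}$ for a large constant $q$. The pair sets $\{a\}\times B'$, $A'\times\{b\}$ and $A'\times B'$ are disjoint. Hence the containment events contribute an independent factor $p^{|A'|+|B'|}$, which turns the $\binom{n}{i}\binom{n}{j}$ choices into roughly $(e\,n^{1/(1+\alpha)}/i)^{2i}$ for $i\approx j$. This factor alone is negligible once $i>10e\,n^{1/(1+\alpha)}$. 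For smaller $i$, a Chernoff bound on $\textrm{Bin}(ij,p)$ exceeding $q(i+j)^{2-\alpha}$ (which is at least the mean) beats it, with a separate estimate for very small $i$. Markov's inequality then yields a graph with no bad tuple, which covers all pairs from different sides, adjacent or not; same-side pairs are trivial.
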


\paragraph{Related work.}
Agarwal, Nevo, Pach, Pinchasi, Sharir and Smorodinsky~\cite{lenses} proved that
any family of $n$ pairwise intersecting $x$-monotone bi-infinite $2$-intersecting  curves admits at most $2n-4$ tangencies.
Families of $2$-intersecting closed curves are called \emph{pseudo-circles}.
Erd\H{o}s and Gr\"unbaum~\cite{EG73} (see also~\cite[\textsection 
7.1, Problem 14]{BMP05}) asked for the maximum number of tangencies among $n$ pseudo-circles.
An $\Omega(n^{4/3})$ lower bound for this problem follows  from the many point-line incidences construction.
The best upper bound is $O(n^{3/2})$ by a recent result of Janzer, Janzer, Methuku and Tardos~\cite{janzer2025}.
As for pairwise intersecting pseudo-circles, a recent work with Dam\'{a}sdi, Pinchasi and Raffay~\cite{psuedocircles-socg} settled an old conjecture of Gr\"unbaum~\cite{GRUNB72} by showing that the maximum number of tangencies is $2n-2$ in this case. 
A worse linear upper bound was proved in~\cite{lenses}.

Ellenberg, Solymosi and Zahl~\cite{Ellenberg2016New} proved that any family of $n$ plane algebraic curves of degree at most $d$ admits $O_d(n^{3/2})$ tangencies.\footnote{As mentioned before, their notion of tangency is a bit different and they actually bound the number of tangency points.}
If two curves may intersect an arbitrary number of times, then it is possible to have $\Omega(n^2)$ tangencies among $n$ curves.
Solving a conjecture of Richter and Thomassen it was shown by Pach, Rubin and Tardos~\cite{PRT18} that if every pair of $n$ closed curves intersects, then there are many more crossing points than tangency points.
Specifically, they showed that if $X$ denotes the crossing points and $T$ denotes the tangency points, then $|X| = \Omega(|T|(\log \log n)^{1/8})$ and conjectured that $|X| = \Omega(|T|\log n)$.

Another line of research studies the number of tangencies between two families each consisting of disjoint curves.
This problem was introduced by Pach, Suk and Treml~\cite{Treml} who attributed it to Ben-Dan and Pinchasi~\cite{Ben-Dan}.
An upper bound of $O(n^{3/2})$ follows from an observation of Ben-Dan and Pinchasi and from the result of Janzer et al.~\cite{janzer2025}, whereas a lower bound of $\Omega{(n^{4/3})}$ was shown by Keszegh and P{\'a}lv{\"o}lgyi~\cite{KP23} using $2$-intersecting curves.
For two families of $1$-intersecting curves a linear upper bound was proved by P{\'a}lv{\"o}lgyi et nos~\cite{Lshapes}.

For further related results on tangencies among curves and their applications see~\cite{lenses} and~\cite{PRT18} and the references therein.

\paragraph{Organization.}
After introducing some notations and collecting a few useful lemmas in Section~\ref{sec:tools}, we prove the results for arbitrary $1$-intersecting curves in Section~\ref{sec:1-int}.
Our results for $x$-monotone curves are proved in Section~\ref{sec:x-mon}.
Finally, in Section~\ref{sec:sparse-neighborhoods} we prove Theorems~\ref{thm:spare-neighborhoods-ub} and~\ref{thm:spare-neighborhoods-lb} and discuss their further applications and connections to related graph theoretic results.

\section{Notations and Tools}
\label{sec:tools}

Recall that we assume that every two curves intersect at finitely many points and that no three curves intersect at a single point.
It follows from the former assumption that we may also assume that the curves are drawn as polygonal chains.
Indeed, given a set of curves $\C$, consider the plane graph $G$ whose vertex set consists of the endpoints and intersection points of the curves in $\C$ and whose edge set consists of subcurves between consecutive vertices along the curves.
By F\'ary's Theorem~\cite{Fary} this planar graph can be embedded using straight-line edges and the same rotation system.
Replacing every curve $c \in \C$ with the polygonal chain that consists of the edges in this embedding that correspond to edges of $G$ that belong to $c$, we obtain a family of curves $\C'$ such that for every intersection (resp., tangency) point of two curves in $\C$ there is a unique intersection (resp., tangency) point between their corresponding curves in $\C'$, and vice versa.

For a set of curves $\C$ we denote by $G_\C$ the \emph{tangency graph} of $\C$, that is, the graph whose vertex set is $\C$ and whose edge set consists of the tangent pairs of curves.
By abuse of notation we sometimes interchange curves in $\C$ and their corresponding vertices in $G_\C$.

Let $c$ be a curve and let $p$ and $q$ be two points on $c$.
We denote by $c[p,q]$ the subcurve of $c$ between these two points.
For another curve $c'$ that intersects $c$ at a single point, 
we may also write, e.g., $c[c',q]$ instead of $c[c \cap c',q]$.
If $c$ is oriented, then we denote by $c^-$ (resp., $c^+$) its starting (resp., terminating) point following its orientation.
We also write $c[-,p]$ and $c[p,+]$ instead of $c[c^-,p]$ and $c[p,c^+]$, respectively.
If $c$ is an $x$-monotone curve and $p$ is a point (not necessarily on $c$),
then with some abuse of notation by writing, e.g., $c[p,+]$, we mean $c[\ell_p \cap c,+\infty)$, where $\ell_p$ is the vertical line through $p$.
As usual, a round bracket indicates that an endpoint does not belong to the subcurve.    

A family of $x$-monotone curves induces a \emph{generalized trapezoidal partition} of the plane in the following way. From every point $p$ which is an endpoint of a curve or an intersection point of two curves we draw a maximal vertical segment (possibly a ray or a line) that contains $p$ and does not cross any other curve but the ones containing $p$.
This yields a partition of the plane into \emph{generalized trapezoids}.

\begin{theorem}[Cutting Lemma for $x$-monotone curves~{\cite[Proposition 2.11]{Sariel00}}]\label{thm:cutting-lemma}
	For every family $\C$ of $n$ $x$-monotone $1$-intersecting curves and any $r>1$ there is a subset of curves in $\C$ which induces a generalized trapezoidal partition of the plane into $O(r^2)$ generalized trapezoids such that the interior of each trapezoid is intersected by at most $n/r$ curves from $\C$.
\end{theorem}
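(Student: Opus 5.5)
The plan is the standard random-sampling argument of Clarkson and Shor, followed by a Chazelle--Friedman style refinement step that removes the logarithmic factor. The $x$-monotone $1$-intersecting setting should make each ingredient routine.

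\textbf{Step 1: combinatorial facts about the partition.} For a subset $R\subseteq\C$ of size $m$, the generalized trapezoidal partition induced by $R$ has $O(m+X)$ trapezoids, where $X\le\binom{m}{2}$ is the number of intersection points. Each endpoint or intersection point spawns at most two vertical walls, and the walls together with the curves form a planar map of linear complexity in the number of such points. Hence the partition has $O(m^2)$ trapezoids. Moreover, each trapezoid $\Delta$ is determined by a constant number of curves of $R$ (at most four: the top curve, the bottom curve, and the curves defining its left and right walls). Also, $\Delta$ appears in the partition of $R$ if and only if its defining curves are in $R$ and no curve of $R$ meets its interior. This is exactly the setting of the Clarkson--Shor framework. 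The set system ``curves meeting the interior of a trapezoid'' has bounded VC-dimension, so the $\varepsilon$-net theorem applies.

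\textbf{Step 2: first-level sample.} Pick $R$ uniformly at random with $|R|=r$. For a trapezoid $\Delta$ of the partition, let $k_\Delta$ be the number of curves of $\C$ meeting its interior, and set $t_\Delta=k_\Delta r/n$. The Clarkson--Shor exponential decay lemma gives, for any fixed constant $c$,
\[
\mathbb{E}\Bigl[\sum_\Delta t_\Delta^{c}\Bigr]=O\bigl(\mathbb{E}[\#\text{trapezoids}]\bigr)=O(r^2).
\]
I would prove this by comparing with a sample of size $r/t$ and bounding the probability that a trapezoid with excess $t$ survives by $e^{-\Omega(t)}$.

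\textbf{Step 3: refinement.} For every $\Delta$ with $t_\Delta>1$, take a random sample $R_\Delta$ of size $O(t_\Delta\log t_\Delta)$ from the $k_\Delta$ curves meeting $\Delta$. By the $\varepsilon$-net theorem with $\varepsilon=1/t_\Delta$, with positive probability every trapezoid of the partition of $R_\Delta$ clipped to $\Delta$ is met by at most $k_\Delta/t_\Delta=n/r$ curves. The clipped partition has $O\bigl((t_\Delta\log t_\Delta)^2\bigr)$ pieces, so by Step 2 with $c=3$ the total expected count is $O(r^2)$. Fixing a good outcome gives the bound.

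\textbf{Main obstacle.} The delicate point is that the statement requires the final partition to be \emph{induced} by a subset of $\C$. The curves of $R_\Delta$ extend outside $\Delta$ and could create many extra intersections elsewhere. I would first try to show that, inside each original trapezoid $\Delta'$, only curves that were sampled for some $\Delta$ and actually meet $\Delta'$ contribute. Summing the resulting quadratic counts over all trapezoids should still give $O(r^2)$ in expectation via the same moment bounds, since each such curve meets $\Delta'$ and is therefore counted in $k_{\Delta'}$. If that fails, I would instead use the clipped-partition interpretation, which is how Har-Peled's cutting is actually used. In that interpretation the walls are cut at the boundary of $\Delta$, and this suffices for the applications later in the paper.
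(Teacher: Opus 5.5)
The paper gives no proof of this statement; it quotes it from Har-Peled. Measured against the statement as written, your argument has a genuine gap, and it is exactly the one you flag at the end.

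Steps 1--3 are the standard route: Clarkson--Shor, exponential decay, and Chazelle--Friedman refinement inside every $\Delta$ with $t_\Delta>1$. They correctly produce a partition of the plane into $O(r^2)$ cells. Each cell is bounded by vertical segments and by pieces of at most two curves of $\C$, and each is met by at most $n/r$ curves. However, inside $\Delta$ the cells come from the decomposition of $R_\Delta$ \emph{clipped} to $\Delta$. So this is not the generalized trapezoidal partition induced by a subset of $\C$ in the paper's sense, where full curves are used and walls extend until they hit a curve of the subset.

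Your repair (a) fails. A typical first-level cell has conflict size of order $n/r$, so a constant fraction of the $\Theta(r^2)$ cells must be refined. Hence $\bigl|\bigcup_\Delta R_\Delta\bigr|$ is in general of order $r^2$, not $O(r)$. In a pairwise crossing family (e.g.\ long segments in general position), the full decomposition of these curves then has $\Theta(r^4)$ cells. The fact that a refinement curve meeting $\Delta'$ is counted in $k_{\Delta'}$ only bounds the number of such curves by $t_{\Delta'}n/r$. Nothing ties it to $t_{\Delta'}$, because a full curve sampled for $\Delta$ runs through $\Theta(r)$ other first-level cells.

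More fundamentally, in a pairwise crossing family any subset inducing $O(r^2)$ trapezoids has only $O(r)$ curves. So the literal statement asks for an $O(r)$-size subset whose vertical decomposition is already a $(1/r)$-cutting. That is a linear-size net-type claim with no $\log r$ loss. Random sampling yields an induced cutting only at sample size of order $r\log r$, hence with $O(r^2\log^2 r)$ cells. Refinement leaves the class of induced partitions by construction, so no variant of your Step 3 can close this.

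The honest fix is your option (b), promoted to the actual claim: state the lemma in clipped form and check that this is all the paper uses. In the proofs of Theorems~\ref{thm:x-mon} and~\ref{thm:red-blue-x-mono-grounded} one only needs the following:
\begin{itemize}
\item $O(r^2)$ cells;
\item each cell bounded above and below by a piece of a single curve of $\C$, so each curve meets a cell in one connected piece and $m_i\le n/r+2$;
\item each cell met by at most $n/r$ curves;
\item $\sum_i n_i\le 2n$.
\end{itemize}
The clipped cutting has all of these properties.

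A minor point: the appeal to bounded VC-dimension is unnecessary, and it would need justification for general pseudo-segments. Clarkson's union bound over the $O(s^4)$ trapezoids defined by a sample of size $s$ already gives sample size $O(t_\Delta\log t_\Delta)$ in Step 3.
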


We will also make use of the (asymmetric) K\H{o}v\'ari-S\'os-Tur\'an Theorem
and the recent tight bound of Janzer et al.~\cite{janzer2025} on the size of intersection-reverse sequences.
The latter improves previous bounds by Marcus and Tardos~\cite{MT06} and by Pinchasi and Radoi\v{c}i\'c~\cite{crossingC4}.

\begin{theorem}[K\H{o}v\'ari-S\'os-Tur\'an Theorem~\cite{Kovari}]
	For every $m,n,s,t \ge 1$ if $G=(A \cup B,E)$ is a bipartite graph such that $|A|=m$, $|B|=n$, and there are no $s$ vertices in $A$ and $t$ vertices in $B$ that form a complete bipartite subgraph $K_{s,t}$, then $|E| \le (s-1)^{1/t}nm^{1-1/t}+(t-1)m$.
	In particular, $|E| = O_s(n^{2-1/t})$ when $m=n$ and $s \ge t$.
\end{theorem}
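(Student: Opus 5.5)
We will prove the statement by the standard double count of ``stars'' $K_{1,t}$ centred in $A$, followed by a convexity (Jensen) step; this is the classical argument, and the bound stated is exactly what it yields. Concretely, we count pairs $(a,T)$ where $a\in A$ and $T\subseteq N_G(a)\subseteq B$ with $|T|=t$. For a fixed $a$ the number of such $T$ is $\binom{d(a)}{t}$. For a fixed $t$-subset $T\subseteq B$, the vertices $a$ with $T\subseteq N_G(a)$ are exactly the common neighbours of $T$. If there were $s$ of them, they would form a $K_{s,t}$ with $T$. Hence each $T$ is counted at most $s-1$ times, and
\[
\sum_{a\in A}\binom{d(a)}{t}\le (s-1)\binom{n}{t}.
\]

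Next we pass to the average degree $d=|E|/m$. We use the function $g(x)=x(x-1)\cdots(x-t+1)/t!$ for $x\ge t-1$ and $g(x)=0$ for $x<t-1$. It is convex and nondecreasing on $[0,\infty)$ and agrees with $\binom{x}{t}$ at nonnegative integers. Jensen's inequality then gives $m\,g(d)\le (s-1)\binom{n}{t}\le (s-1)n^t/t!$. If $d\le t-1$, then $|E|\le (t-1)m$ and we are done. Otherwise we use $g(d)\ge (d-t+1)^t/t!$, which gives $m(d-t+1)^t\le (s-1)n^t$. Therefore $d\le (s-1)^{1/t}\,n\,m^{-1/t}+t-1$, and multiplying by $m$ yields $|E|\le (s-1)^{1/t}nm^{1-1/t}+(t-1)m$.

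For the ``in particular'' part, set $m=n$. This gives $|E|\le (s-1)^{1/t}n^{2-1/t}+(t-1)n=O_s(n^{2-1/t})$, using $t\le s$ to absorb the dependence on $t$ into the constant. The only delicate point is the convexity step: $\binom{x}{t}$ as a polynomial is not convex for small $x$, so we must use the truncated extension $g$ and check that it is convex and monotone. This holds because the polynomial is nonnegative, increasing and convex for $x\ge t-1$, since all of its roots are at most $t-1$, and it vanishes at $x=t-1$, where it meets the zero function continuously. The rest of the argument is routine.
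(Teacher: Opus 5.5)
Your proof is correct. The double count gives $\sum_{a\in A}\binom{d(a)}{t}\le (s-1)\binom{n}{t}$. Jensen applied to the convex truncation $g$, the split at $d\le t-1$, and the estimate $g(d)\ge (d-t+1)^t/t!$ then give exactly $|E|\le (s-1)^{1/t}nm^{1-1/t}+(t-1)m$. The ``in particular'' clause follows from $(s-1)^{1/t},\,t-1\le s$ together with $n\le n^{2-1/t}$.

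The paper gives no proof of this statement; it only quotes the K\H{o}v\'ari-S\'os-Tur\'an Theorem as a tool, and your argument is the classical proof of that bound. One step could be phrased more precisely: the gluing at $x=t-1$. Write $p(x)=x(x-1)\cdots(x-t+1)/t!$. Convexity of $g$ there does not follow from continuity alone; it holds because the right derivative $p'(t-1)=1/t$ is at least the left derivative $0$. Equivalently, $g(x)=p(\max\{x,t-1\})$ with $p$ convex and nondecreasing on $[t-1,\infty)$. Your remark that the polynomial is increasing there already covers this.
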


\begin{theorem}[\cite{janzer2025}]
	\label{thm:intersection-reverse}
	Let $A^1,\ldots,A^n$ be linear orders on some subsets of a set of $n$ symbols such that no three symbols appear in the same order in any two distinct linear orders. Then $\sum_{i=1}^n |A^i| = O(n^{3/2})$.
\end{theorem}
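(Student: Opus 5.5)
Since this is the cited theorem of Janzer, Janzer, Methuku and Tardos, the paper simply invokes it. Still, here is how I would try to prove it. I would encode the data as a bipartite graph $G$ with lists $A^1,\dots,A^n$ on one side and symbols on the other, where $A^i$ is joined to $s$ if $s\in A^i$; each list vertex also carries the linear order of its neighbourhood. We must show $|E(G)|=O(n^{3/2})$.

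The hypothesis says: for $i\neq j$, the order $A^j$ induces on $A^i\cap A^j$, read against $A^i$, has no increasing subsequence of length $3$. By Erd\H{o}s--Szekeres, the common part is then the union of two decreasing chains, so the two lists essentially reverse each other there. The naive route is to count cherries through symbols:
\[
\sum_s \binom{d(s)}{2}=\sum_{i<j}|A^i\cap A^j| ,
\]
and combine this with convexity, $\sum_s d(s)^2\ge |E|^2/n$. This would give $O(n^{3/2})$ if every intersection had constant size. That is false, since two lists may share many symbols in reverse order, so the counting has to be weighted.

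My plan is to replace ``common symbols'' by ``common \emph{consecutive} pairs''. For each list $A^i$, take the $|A^i|-1$ pairs of symbols that are adjacent in $A^i$; this gives an auxiliary multigraph on the symbols with about $|E|$ edges, each labelled by its list. The key claim I would aim for is a Kővári–Sós–Turán-type statement: a fixed pair of lists can share only $O(1)$ ``configurations'' of one of these two shapes:
\begin{itemize}[nosep]
\item a symbol $a$ of $A^i$ together with its successor there, where both also lie in $A^j$;
\item pairs that are consecutive in both lists.
\end{itemize}
The motivation is that a reversed common segment contributes only boundary configurations, because any third symbol in the same order would create a monotone triple. With such a claim, the count of cherries over consecutive pairs is $O(n^2)$. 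Cauchy--Schwarz applied to the pairs would then give $|E|^2/n\lesssim n^2$, i.e.\ $|E|=O(n^{3/2})$.

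The main obstacle is exactly this step. Reversed long intersections let a single pair of lists share many symbols, and it is not clear that adjacency alone restores the $O(1)$ bound: symbols of $A^j$ may interleave between consecutive symbols of $A^i$. Marcus--Tardos and Pinchasi--Radoi\v{c}i\'c lost $\log n$ or polynomial factors precisely here. My first attempt would be a dependent-random-choice or dyadic decomposition: split each list into blocks of geometrically growing length, run the cherry count separately for each scale, and use the forbidden monotone triple to show that each scale contributes $O(n^2)$ with geometrically decaying weights, so that no logarithm accumulates. If that fails, I would fall back on the forbidden $0$-$1$ matrix-pattern formulation and try to sharpen the Marcus--Tardos argument with a supersaturation step.
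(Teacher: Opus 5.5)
As you note, the paper gives no proof of this statement: it imports it as a black box from \cite{janzer2025} and uses it (via Proposition~\ref{prop:abc-abc}) in the proof of Theorem~\ref{thm:2-grounded-disjoint}. Judged on its own, your sketch has a genuine gap, and in fact its key claim is false. Take $A^i=(1,2,\dots,m)$ and $A^j=(m,m-1,\dots,1)$. They are completely reversed, so they satisfy the hypothesis. Yet each of the $m-1$ consecutive pairs $\{k,k+1\}$ of $A^i$ has both symbols in $A^j$ and is also consecutive in $A^j$. So both of your shapes occur $\Theta(m)$ times for this single pair of lists. Your heuristic that a reversed common segment contributes only boundary configurations has it backwards: reversal preserves adjacency. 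Requiring the pair to appear in the same order in both lists does not help either. The lists $A^i=(a_1,b_1,a_2,b_2,\dots,a_k,b_k)$ and $A^j=(a_k,b_k,a_{k-1},b_{k-1},\dots,a_1,b_1)$ satisfy the hypothesis: three symbols from distinct blocks are reversed, and a block $\{a_t,b_t\}$ together with an outside symbol switches sides. Yet they share $k=|A^i|/2$ pairs that are consecutive, in the same order, in both lists.

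Even if some $O(1)$ bound per pair of lists were available, your double count would not close. The convexity bound of order $|E|^2/n$ holds for cherries centred at single symbols, i.e., two edges of the auxiliary multigraph at a common symbol $a$. But every pair of lists that both contain $a$ produces such a cherry, whether or not the neighbours of $a$ lie in the other list. So this count is again of order $\sum_{i<j}|A^i\cap A^j|$, and your configurations do not control it. If instead you count only the configurations your claim controls, the centres are pairs of symbols, of which there are $\binom n2$. Convexity then gives only $\sum_p\binom{m_p}{2}\ge (|E|-n)^2/n^2-|E|$, which against an $O(n^2)$ upper bound yields the trivial $|E|=O(n^2)$. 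So the whole difficulty is left to your unspecified dyadic, dependent-random-choice or supersaturation fallback: amortising long reversed intersections without the logarithmic loss of \cite{MT06}. That step is exactly the content of \cite{janzer2025}, and as written the proposal does not prove the theorem.
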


\section{Arbitrary $1$-intersecting curves}
\label{sec:1-int}

In this section we consider $1$-intersecting curves that are not necessarily $x$-monotone and prove Theorems~\ref{thm:1-int} and~\ref{thm:precisely-1-int}.
We begin with grounded curves.
A family of curves $\C$ is \emph{grounded} at a curve $c \notin \C$ if every curve in $\C$ has an endpoint on $c$. 

\begin{theorem}\label{thm:2-grounded-disjoint}
	Let $A$ be a family of curves grounded at a curve $\ell_A$ and let $B$ be a family of curves grounded at a curve $\ell_B$, such that all of these curves form a family of $1$-intersecting curves, no curve in $A \cup \{\ell_A\}$ intersects $\ell_B$ and no curve in $B \cup \{\ell_B\}$ intersects $\ell_A$.
	Then the number of tangent pairs $\{c_A \in A, c_B \in B\}$ is $O(|A \cup B|^{3/2})$.  
\end{theorem}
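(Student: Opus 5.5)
The natural tool is Theorem~\ref{thm:intersection-reverse}: it gives $O(n^{3/2})$ whenever we can encode the tangencies as linear orders on a ground set of symbols such that no three symbols appear in the same order in two distinct lists. I will take the symbols to be the curves of $B$ together with the curves of $A$, so there are $N=|A\cup B|$ symbols. For each $a\in A$, orient $a$ starting from its endpoint on $\ell_A$, and let $A^a$ be the sequence of curves of $B$ tangent to $a$, listed in the order of their tangency points along $a$. Symmetrically, for each $b\in B$ oriented from $\ell_B$, let $B^b$ be the sequence of curves of $A$ tangent to $b$, in order along $b$. The total length of these $N$ lists is twice the number of tangent pairs, so it suffices to check the forbidden-pattern condition, possibly after replacing each list by its reversal or splitting it into a constant number of parts.

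The key geometric step concerns two curves $a,a'\in A$ and three curves $b_1,b_2,b_3\in B$, each $b_i$ tangent to both $a$ and $a'$. Form the closed curve $\gamma$ consisting of $a$ from $\ell_A$ to its tangency with $b_i$, then $b_i$ to its tangency with $a'$, then $a'$ back to $\ell_A$, closed up along $\ell_A$. Since $\ell_B$ and the curves of $B$ must reach $\ell_B$, which is disjoint from $\ell_A$ and from the curves of $A$, the endpoint of each $b_j$ on $\ell_B$ lies in a single, well-defined face relative to $\gamma$ (the one containing $\ell_B$). Every $b_j$ meets $a$ and $a'$ only at its tangency points and meets $b_i$ at most once. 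A parity/Jordan-curve argument on these regions should therefore show that the orders of $b_1,b_2,b_3$ along $a$ and along $a'$ are forced to be reverses of each other, or at least cannot coincide. This is the same mechanism as for pseudo-circles and for the two families of disjoint curves handled in~\cite{janzer2025} and~\cite{Ben-Dan}.

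The symmetric statement for $B$-lists follows by exchanging the roles of $A$ and $B$, and lists of different types use disjoint symbol sets, so they cannot share three symbols. Applying Theorem~\ref{thm:intersection-reverse} with $n=N$ then yields $O(N^{3/2})$ tangent pairs.

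The main obstacle is the case analysis in the geometric lemma. The difficulty is that $b_i$ may cross $a$ or $a'$ elsewhere; it cannot, since it is already tangent to each of them. It may, however, cross the other $b_j$ once, so a $b_j$ could enter and leave the region bounded by $\gamma$. I would first try to handle this by considering the middle curve in the order along $a$ and tracking on which side of $a$ and $a'$ each curve touches: tangency means each $b_j$ stays locally on one side. I would also use the grounding of $b_j$ to $\ell_B$ outside $\gamma$, showing that a same-order configuration forces some $b_j$ to cross $\gamma$ along $b_i$ twice or to cross $a$. If a direct "no same order" claim fails, a fallback is to split each list by side of tangency (left or right of the oriented $a$) into two lists. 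This only changes constants, and the condition need only be verified within each part.
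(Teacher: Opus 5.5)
\textbf{There is a genuine gap.} The heart of the theorem is the claim that no three symbols appear in the same order in two lists. Your proposal never proves this, and the version you state first is false. Take $a=[0,50]\times\{0\}$ and $a'=[0,50]\times\{10\}$, grounded on $\ell_A=\{0\}\times[0,10]$ and oriented rightwards. For $i=1,2,3$, let $b_i$ start on $\ell_B=\{100\}\times[0,10]$ at height $i+2$ and run left to $x=10i+15$. It then rises to touch $a'$ from below at $x=10i+10$, drops to touch $a$ from above at $x=10i+8$, and ends just after. Every two $b_i$ cross exactly once and all hypotheses hold. Yet $b_1,b_2,b_3$ appear in the same order along $a$ and along $a'$. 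So splitting by tangency type is unavoidable.

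Your fallback records only the side of the symbol relative to the owner. The paper splits into four types, using the side of each oriented curve relative to the other. Its basic tool is this: if $c$ touches $c_1,c_2$ with the same type, where $c_1,c_2$ are grounded on $\ell$ and $c\cap\ell=\emptyset$, then $c_1[c,+]$ and $c_2[c,+]$ do not cross. Otherwise the closed curve $c[c_1,c_2]\cup c_1[c,c_2]\cup c_2[c,c_1]$ would separate $c_1^-$ from $c_2^-$. The tool is applied twice: once with a symbol touching both owners, and once with an owner touching two symbols. The second application needs the owner to lie on the same side of both symbols. Your two-way split does not give this, and without it the two forward arcs can indeed cross. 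Using all four classes costs only a constant, so this part is easily repaired.

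The real missing piece is the geometric argument itself; ``a parity/Jordan-curve argument on $\gamma$ should show'' does not supply it. In your roles (owners $a,a'$, symbols $b_1,b_2,b_3$) the paper argues as follows.
\begin{enumerate}
\item The first application of the tool shows that $a[b_1,b_3]$ and $a'[b_1,b_3]$ do not cross.
\item Suppose the arcs $b_i(a,a')$ were pairwise disjoint. Then one could draw a crossing-free $K_{2,3}$ along these arcs whose two degree-$3$ vertices see their three common neighbours in the same cyclic order. This contradicts the lemma of Pinchasi and Radoi\v{c}i\'c, so some $b_i(a,a')$ and $b_j(a,a')$ cross.
\item Assume $b_i$ meets $a$ before $a'$. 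The second application of the tool forces the crossing onto $b_i[a,+]\cap b_j[-,a]$.
\item The arcs $b_i[a,+]$, $b_j[-,a]$, $a[b_i,b_j]$ and $a'[b_i,b_j]$ then separate $b_i^-$ from $b_j^-$, although both lie on $\ell_B$, which is a contradiction.
\end{enumerate}
The case you flag as the obstacle, two symbols crossing between the owners, is exactly where steps 3 and 4 are needed. The complementary case needs the $K_{2,3}$ rotation lemma. Neither ingredient appears in your proposal.
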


\begin{proof}
	We refer to the curves in $A$ as \emph{red} curves and to the curves in $B$ as \emph{blue} curves.
	If a pair of curves of the same color touch, then we may redraw them locally such that they are disjoint, since we only care about red-blue tangencies.
	Thus, every pair of curves of the same color are either crossing or disjoint.
	Orient each curve in $A$ away from $\ell_A$ and each curve in $B$ away from $\ell_B$.
	Then, there are four types of red-blue tangencies since every curve has a `left' side and a `right' side with respect to its orientation, see Figure~\ref{fig:tangency-types}.
	\begin{figure}[t]
		\centering
		\includegraphics[width=16cm]{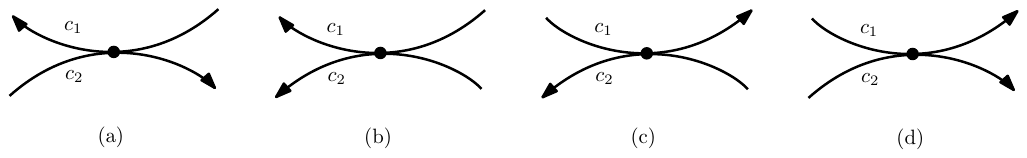}
		\caption{The four tangency types for two oriented curves $c_1$ and $c_2$: (a) left-left (b) left-right (c) right-right (d) right-left.}
	\label{fig:tangency-types}
	\end{figure}
	
	\begin{proposition}\label{prop:no-plus-plus}
		Let $c, c_1, c_2$ and $\ell$ be four distinct curves such that: (1)~$\{c,c_1,c_2,\ell\}$ is $1$-intersecting; (2)~$c_1$ and $c_2$ are grounded at $\ell$ and are oriented away from $\ell$; (3)~$c$ is oriented and touches $c_1$ and $c_2$ at tangency points of the same type; and (4)~$c$ and $\ell$ are disjoint.
		Then $c_1[c,+]$ and $c_2[c,+]$ do not cross.
	\end{proposition}
	
	\begin{proof}
	Suppose that $c_1[c,+]$ and $c_2[c,+]$ cross and let $T$ denote the simple closed curve $c[c_1,c_2] \cup c_1[c,c_2] \cup c_2[c,c_1]$, see Figure~\ref{fig:plus-plus}. 
	Then $c_1^-$ and $c_2^-$ lie on different sides $T$.\footnote{Assuming some arbitrary orientation of $T$, disregarding the orientations of the other curves.}
	However, this is impossible since both of these points are on $\ell$ which cannot cross $T$.
		\begin{figure}[t]
			\centering
			\includegraphics[width= 5cm]{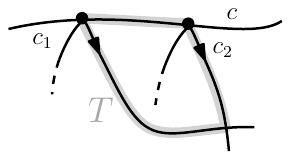}
			\caption{If $c_1$ and $c_2$ are grounded at $\ell$ and have the same tangency type with $c$, then $c_1[c,+]$ and $c_2[c,+]$ cannot cross.}
			\label{fig:plus-plus}
		\end{figure}
	\end{proof}
	
	Returning to the proof of the theorem, we consider each of the four tangency types separately. 
	For a certain type $t$, we write for each curve $b \in B$ the ordered list of curves from $A$ that touch $b$ at a touching point of the given type.
	The list, denote it by $l_t(b)$, is ordered according to the order of the corresponding touching points along $b$.
	
	\begin{proposition}\label{prop:abc-abc}
		There are no three curves $a_1,a_2,a_3 \in A$ and two curves $b_1, b_2 \in B$ such that $a_1,a_2,a_3$ appear in this order both in $l_t(b_1)$ and in $l_t(b_2)$.
	\end{proposition}
	
	\begin{proof}
		Suppose for contradiction that there are such curves.
		Observe first that it follows from Proposition~\ref{prop:no-plus-plus} that $b_1[a_1,+]$ and $b_2[a_1,+]$ do not cross.
		In particular $b_1[a_1,a_3]$ and $b_2[a_1,a_3]$ do not cross.
		Next, we claim that at least one pair of the three subcurves $a_i[b_1,b_2]$, $i=1,2,3$, is crossing.
		Suppose for contradiction that these curves are pairwise disjoint.
		Slightly `inflate' $b_1$ and $b_2$ and fix a point $p(c)$ on each subcurve $c \in \{a_1(b_1,b_2), a_2(b_1,b_2), a_3(b_1,b_2), b_1(a_1,a_3), b_2(a_1,a_3)\}$.
		Note that these subcurves are disjoint.
		Next, draw a crossing-free copy of $K_{2,3}$ whose vertices are these five points and whose edges follow the corresponding subcurves, as suggested by Figure~\ref{fig:crossingC4}.
		\begin{figure}[b]
			\centering
			\includegraphics[width= 8cm]{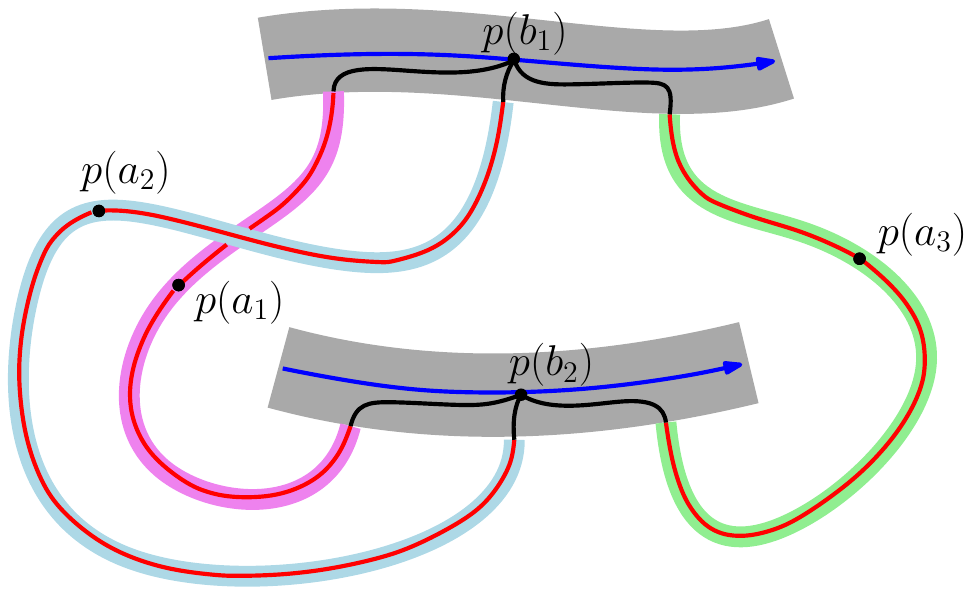}
			\caption{If the subcurves $a_i(b_1,b_2)$ were disjoint, then we could draw a crossing-free copy of $K_{2,3}$.}
			\label{fig:crossingC4}
		\end{figure}	
		Note that since $a_1,a_2,a_3$ touch in this order both of $b_1$ and $b_2$, the counterclockwise cyclic order of $p(a_1),p(a_2),p(a_3)$ is the same for $p(b_1)$ and $p(b_2)$.
		However, this contradicts the following lemma of Pinchasi and Radoi\v{c}i\'c~\cite{crossingC4}.
		
		\begin{lemma}[{\cite[Lemma 1]{crossingC4}}]
			Let $G$ be a topological graph with no two edges belonging to a $4$-cycle that crosses itself an odd number of times.
			For each vertex $v$ of $G$ let $C_v$ denote the counterclockwise cyclic order of the neighbors of $v$.
			Then for each pair of vertices $u$ and $v$, if there are more than two common neighbors of $u$ and $v$, then the order of these neighbors in $C_v$ is reversed with respect to their order in $C_u$. 
		\end{lemma}
		
		For the rest of the proof we will only consider the guaranteed pair of crossing subcurves among $a_i[b_1,b_2]$, $i=1,2,3$, therefore we may
		suppose without loss of generality that $a_1(b_1,b_2)$ and $a_2(b_1,b_2)$ are crossing. 
		We may also assume that $a_1$ meets $b_1$ before meeting $b_2$.
		Since $a_1[b_1,+]$ cannot cross $a_2[b_1,+]$ by Proposition~\ref{prop:no-plus-plus}, it follows that $a_1[b_1,+]$ and $a_2[-,b_1]$ are crossing.
		Together with $b_1[a_1,a_2]$ and $b_2[a_1,a_2]$ they induce a partition of the plane into three connected regions.
		For each type of red-blue tangency, the points $a_1^-$ and $a_2^-$ lie on two different regions, see Figure~\ref{fig:a1a2b1b2} for an illustration.
		\begin{figure}
			\centering
			\includegraphics[width= 12cm]{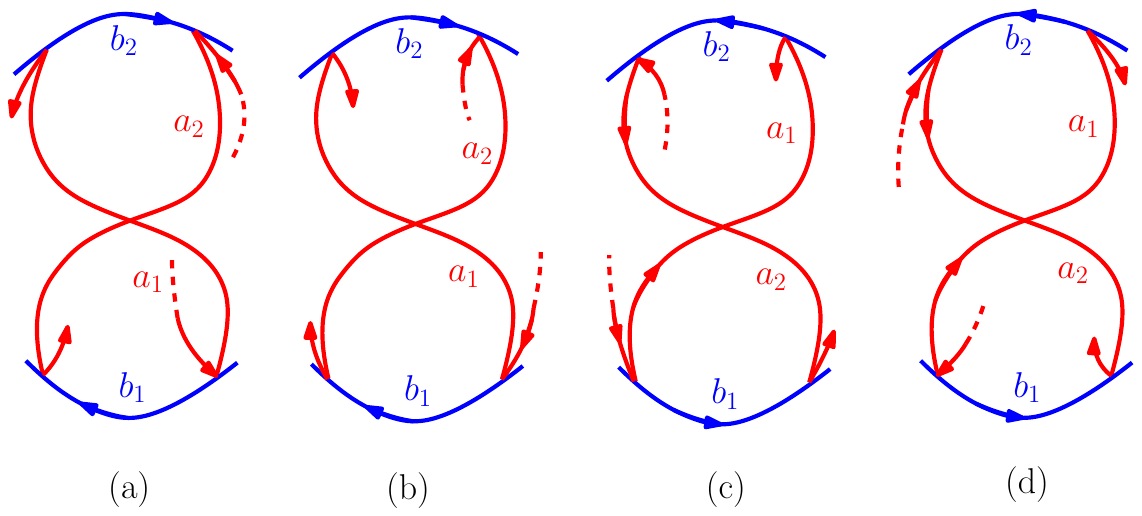}
			\caption{Illustrations for the proof of Proposition~\ref{prop:abc-abc}: $a_1[b_1,+]$ and $a_2[-,b_1]$ are crossing and together with $b_1[a_1,a_2]$ and $b_2[a_1,a_2]$ induce a partition of the plane. $a_1^-$ and $a_2^-$ lie on different regions of this partition for any red-blue tangency type.}
			\label{fig:a1a2b1b2}
		\end{figure}
		However, this is impossible since both of these points lie on $\ell_A$ which does not cross any of the curves $a_1,a_2,b_1,b_2$.
	\end{proof}
	
	It follows from Proposition~\ref{prop:abc-abc} and Theorem~\ref{thm:intersection-reverse} that $\sum_i |l_t(b_i)| = O(|A \cup B|)^{3/2}$. 
	Therefore, the number of red-blue tangencies is also $O(|A \cup B|)^{3/2}$. 
\end{proof}

\begin{corollary}\label{cor:1-grounded}
	Let $\C$ be a family of $n$ $1$-intersecting curves grounded at a curve $\ell$. Then the number of tangencies among $\C$ is $O(n^{3/2})$.  
\end{corollary}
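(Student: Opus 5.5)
Corollary~\ref{cor:1-grounded} will be derived from Theorem~\ref{thm:2-grounded-disjoint} by a random (or balanced) bipartition of the curves into a part grounded at one copy of $\ell$ and a part grounded at a disjoint copy. Each curve $c \in \C$ has an endpoint on $\ell$. Along $\ell$ these grounding endpoints appear in some order, and we distinguish two cases: either we split $\C$ into two classes and count the tangencies between the classes, or we use a divide-and-conquer along $\ell$. Concretely, I would use a recursive halving. Orient $\ell$ and split it at a point $p$ so that half of the curves have their grounding endpoint on $\ell_A := \ell[-,p)$ and half on $\ell_B := \ell(p,+]$. Call these families $A$ and $B$.

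The key geometric step is to make $\ell_A$ and $\ell_B$ satisfy the disjointness hypotheses of Theorem~\ref{thm:2-grounded-disjoint}. Since no curve of $\C$ crosses $\ell$ except at its endpoint, I would shrink the grounding: slightly thicken $\ell$ into a thin strip and replace $\ell_A$ by a curve drawn just to one side of $\ell[-,p)$ near the endpoints of the $A$-curves. Then extend each $A$-curve by a short segment to reach it, and do the same for $B$ with $\ell(p,+]$. More simply, take $\ell_A$ to be the portion of $\ell$ containing the $A$-endpoints, trimmed so that it contains no endpoint of a $B$-curve; similarly for $\ell_B$. Curves of $B$ touch $\ell$ only at their own endpoint, which lies on $\ell_B$, so they avoid $\ell_A$. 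Curves of $A$ likewise avoid $\ell_B$, and $\ell_A \cap \ell_B = \emptyset$. The family $A\cup B\cup\{\ell_A,\ell_B\}$ is $1$-intersecting: a curve meets $\ell$ only at its endpoint, and if some curve met $\ell$ elsewhere we would first perturb it, since the definition of grounding only demands an endpoint on $\ell$. I expect this to be the main obstacle. If curves may cross $\ell$ at interior points, then we need that the curves together with $\ell$ are $1$-intersecting, so each curve meets $\ell$ only at its endpoint. I would state this as the standing interpretation of ``grounded'', inherited from Theorem~\ref{thm:2-grounded-disjoint}'s hypothesis that all curves form a $1$-intersecting family.

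Theorem~\ref{thm:2-grounded-disjoint} then bounds the $A$--$B$ tangencies by $O(n^{3/2})$. The tangencies within $A$ and within $B$ are handled recursively, since $A$ is grounded at $\ell_A$ and $B$ at $\ell_B$. This gives $T(n)\le 2T(n/2)+O(n^{3/2})$, which solves to $T(n)=O(n^{3/2})$ because the exponent exceeds $1$ and the geometric series is dominated by the top level.
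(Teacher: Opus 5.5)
Your proof is correct and is essentially the paper's argument: split $\ell$ into two disjoint subcurves that each ground half of the curves, bound the cross tangencies by Theorem~\ref{thm:2-grounded-disjoint}, recurse on each half, and solve $t(n)\le 2t(n/2)+O(n^{3/2})$. Your standing assumption that each curve meets $\ell$ only at its grounding endpoint, so that $\C\cup\{\ell\}$ is $1$-intersecting, is the same implicit assumption the paper relies on; the side remarks about thickening $\ell$ or perturbing away extra crossings with $\ell$ are unnecessary, and the perturbation one would not work in general.
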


\begin{proof}
	Denote by $t(n)$ the maximum number of tangencies among such a family of $n$ curves. 
	We prove that $t(n)=O(n^{3/2})$ by induction on $n$.
	Split $\ell$ into two disjoint subcurves such that $n/2$ curves from $\C$ are grounded at each subcurve.
	Using Theorem~\ref{thm:2-grounded-disjoint} and induction we get the recursive relation $t(n) \le 2t(n/2) + O(n^{3/2})$ which implies that $t(n) = O(n^{3/2})$.
\end{proof}

\begin{corollary}\label{cor:2-grounded}
	Let $A$ be a family of curves grounded at a curve $\ell_A$ and let $B$ be a family of curves grounded at a curve $\ell_B$, such that $A \cup B \cup \{\ell_A,\ell_B\}$ form a family of $1$-intersecting curves.
	Then the number of tangent pairs $\{c_A \in A, c_B \in B\}$ is $O(|A \cup B|^{3/2})$.  
\end{corollary}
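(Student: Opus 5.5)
Corollary~\ref{cor:2-grounded} drops the disjointness hypotheses of Theorem~\ref{thm:2-grounded-disjoint}: now $\ell_A$ and $\ell_B$ may meet each other (at most once), curves of $A$ may meet $\ell_B$, and curves of $B$ may meet $\ell_A$. My plan is to reduce to Theorem~\ref{thm:2-grounded-disjoint} by cutting the ground curves and the grounded curves into pieces, and then to control how much is lost by a divide-and-conquer recursion.

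First I would deal with $\ell_A\cap\ell_B$. If the two ground curves meet at a point $p$, I split each of them at $p$. This gives at most four pairs of ground pieces, each pair disjoint (up to the common endpoint, which can be trimmed off because no curve is grounded exactly there). The tangencies then split into $O(1)$ subproblems, each with disjoint ground curves. So we may assume $\ell_A\cap\ell_B=\emptyset$.

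Next I would remove the crossings of grounded curves with the opposite ground curve. Each $a\in A$ meets $\ell_B$ at most once. Let $a'$ be the initial part of $a$, oriented away from $\ell_A$, up to its point on $\ell_B$; if $a$ misses $\ell_B$, then $a'=a$. Any tangency between $a$ and some $b\in B$ lies either on $a'$ or on the remainder $a\setminus a'$. The remainder is grounded at $\ell_B$ itself. Define $b'$ symmetrically for $b\in B$. Each red--blue tangency then falls into one of four classes, according to whether it lies on $a'$ or its remainder and on $b'$ or its remainder.

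\begin{itemize}
\item In the class (initial part of $a$, initial part of $b$), the initial parts of $A$ avoid $\ell_B$ in their interiors and are grounded at $\ell_A$, and symmetrically for $B$. Provided we can also ensure that $\ell_A$ avoids $B$ and $\ell_B$ avoids $A$, Theorem~\ref{thm:2-grounded-disjoint} applies to this class.
\item In the mixed classes, both families become grounded on the same curve, say $\ell_B$. This is where Corollary~\ref{cor:1-grounded} applies: the union is a $1$-intersecting family grounded at one curve, so it has $O(n^{3/2})$ tangencies in total.
\end{itemize}

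The main obstacle is the hypothesis that no curve of $A\cup\{\ell_A\}$ intersects $\ell_B$ and symmetrically. Even after the truncation, a $b'$ may still cross $\ell_A$ somewhere other than at its own ground point. Since $b$ meets $\ell_A$ at most once, I would cut $b$ at $\ell_A$ as well. Each resulting piece is then either grounded at $\ell_B$ and avoids $\ell_A$, or grounded at $\ell_A$ (the piece beyond the crossing), which again leads to a Corollary~\ref{cor:1-grounded} instance after merging families. Because every curve meets each ground curve at most once, each curve splits into $O(1)$ pieces, and each piece is grounded at $\ell_A$ or at $\ell_B$.

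Classifying the pairs of pieces gives $O(1)$ subproblems. Each one is either of the form of Theorem~\ref{thm:2-grounded-disjoint}, with two families on two distinct, mutually avoiding grounds, or of the form of Corollary~\ref{cor:1-grounded}. The remaining worry is that a piece grounded at $\ell_A$ may still cross $\ell_B$ after cutting. If so, I would recursively halve the ground curves, as in the proof of Corollary~\ref{cor:1-grounded}, to get a recurrence $t(n)\le 2t(n/2)+O(n^{3/2})$. Summing the $O(1)$ subproblem bounds then gives $O(|A\cup B|^{3/2})$.
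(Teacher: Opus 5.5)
Your decomposition is the paper's: split $\ell_A,\ell_B$ at their common point, cut each curve once at the opposite ground curve, and bound each class of piece pairs either by Theorem~\ref{thm:2-grounded-disjoint} (pieces on different, disjoint grounds) or by Corollary~\ref{cor:1-grounded} (pieces on the same ground). Your preliminary split into the four subproblems $(A^i,B^j)$ is a harmless variant of the paper's bookkeeping; the paper instead keeps all four ground pieces at once.

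As written, however, the argument is not closed. The one hypothesis that needs checking is left as a ``worry'', and the patch you offer would not work. Halving the ground curves, as in the proof of Corollary~\ref{cor:1-grounded}, cannot remove an intersection between a piece and the opposite ground curve; that point simply lies on one of the halves. The cross term of such a recursion is again bounded by Theorem~\ref{thm:2-grounded-disjoint}, whose hypothesis is exactly the one in doubt, so the patch is circular.

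What replaces it is a one-line check, and it shows that both your ``main obstacle'' and your ``remaining worry'' are vacuous. Since the whole family is $1$-intersecting, a curve $b$ grounded at $\ell_B$ meets $\ell_B$ only at its grounding endpoint $b^-$, and meets $\ell_A$ at most once.
\begin{itemize}
\item $b'$ stops, by definition, at the unique point of $b$ on $\ell_A$, so it cannot meet $\ell_A$ anywhere else.
\item The remainder of $b$ beyond that point misses $\ell_B$, because $b\cap\ell_B=\{b^-\}$ lies on $b'$.
\end{itemize}
The same holds with $A$ and $B$ exchanged. Hence, after one round of cutting (and trimming the cut points, which carry no tangencies since no three curves share a point), every piece is grounded at exactly one of $\ell_A,\ell_B$ and is disjoint from the other.

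Your bullets also omit the class (remainder of $a$, remainder of $b$). There the $A$-pieces are grounded at $\ell_B$ and the $B$-pieces at $\ell_A$, so Theorem~\ref{thm:2-grounded-disjoint} applies with the roles of the two grounds swapped. With this, four applications per subproblem finish the proof, and no recursion is needed.
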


\begin{proof}
	If $\ell_A$ and $\ell_B$ cross at a point $p$, then we split each of them into two curves at $p$ (such that $p$ belongs to neither of them).
	Denote these subcurves by $\ell_A^1, \ell_A^2, \ell_B^1$ and $\ell_B^2$ and note that each pair of them are disjoint.
	Every curve in $A$ (resp., $B$) that crosses $\ell_B$ (resp., $\ell_A$) we cut into two subcurves, such that one of them is grounded at $\ell_A^1$ or $\ell_A^2$ (or at $\ell_A$ if it was not cut) and the other is grounded at $\ell_B^1$ or $\ell_B^2$ (or at $\ell_B$ if it was not cut).
	It remains to bound the number of tangencies among curves grounded at $\ell_x^i$, for each $x \in \{A,B\}$ and $i \in \{1,2\}$, and the number of tangencies of pairs of curves $c$ and $c'$ such that $c$ is grounded at $\ell_x^i$ and $c'$ is grounded at $\ell_y^j$ for each $x,y \in \{A,B\}$ and $i,j \in \{1,2\}$ such that $x \ne y$ or $i \ne j$.
	It follows from Corollary~\ref{cor:1-grounded} and Theorem~\ref{thm:2-grounded-disjoint} that the total number of all of these tangencies is $O(|A \cup  B|^{3/2})$.    
\end{proof}


We can now deduce the main results of this section.
\PWOneIntThm*

\begin{proof}
	Follows immediately from Corollary~\ref{cor:1-grounded}.
\end{proof}

\OneIntThm*

\begin{proof}
Let $\C$ be a family of $n$ $1$-intersecting curves and
let $G_\C$ be the tangency graph of $\C$.
It follows from Corollary~\ref{cor:2-grounded} that $G_\C$ satisfies the $f$-sparse sub-bineighborhoods property for $f(x)=O(x^{3/2})$.
Indeed, let $v_0$ and $v_1$ be two adjacent vertices in $G_\C$ and let $V_i \subseteq N_{G_\C}(v_i) \setminus \{v_{1-i}\}$, for $i=0,1$, be two disjoint subsets of their neighborhoods in $G_\C$.
We wish to bound $t(V_0,V_1)$ --- the number of tangent pairs $\{u_0 \in V_0, u_1 \in V_1\}$.
For every $i=0,1$ and every curve $u \in V_i$, we split $u$ into two subcurves $u_i'$ and $u''_i$ such that each of them is grounded at $v_i$.
Denote the resulting set of curves by $V'_1, V''_1, V'_2$ and $V''_2$, respectively.
Thus, $t(V_0,V_1)=t(V'_0,V'_1)+t(V'_0,V''_1)+t(V''_0,V'_1)+t(V''_0,V''_1)$.
It follows from Corollary \ref{cor:2-grounded} that this sum is upper-bounded by  $O(|V_0 \cup V_1)|^{3/2})$.
Therefore, by Theorem~\ref{thm:spare-neighborhoods-ub} we have that $|E(G_\C)| \le O(n^{5/3})$.
\end{proof}

\section{Tangencies among $x$-monotone curves}
\label{sec:x-mon}

Pach and Sharir~\cite{Pach-Sharir} studied the following problem: 
What is the maximum number of pairs of disjoint line-segments that are \emph{vertically visible} among a set of $n$ segments in the plane?
We say that two segments are \emph{vertically visible} if there exists a vertical segment that intersects both of them and does not intersect any other segment in the set.
The motivation for this problem came from analyzing the maximum size of the events queue in the original implementation of the Bentley-Ottmann line sweeping algorithm for enumerating all the intersections among a set of segments~\cite{Bentely-Ottmann}.

Pach and Sharir~\cite{Pach-Sharir} proved that the maximum number of pairs of disjoint vertically visible line-segments within a set of $n$ segments is $O\left(n^{4/3}\left(\log n\right)^{2/3}\right)$ and $\Omega\left(n^{4/3}\right)$.
They also mentioned these bounds hold also for $x$-monotone $1$-intersecting curves.

It is easy to see that counting disjoint vertically visible pairs can be reduced to counting touching pairs of $x$-monotone $1$-intersecting curves and vice versa.
Indeed, vertically visible curves can become tangent by replacing a very narrow part of one curve by a `spike' that touches the other curve. On the other hand, a pair of touching curves can be easily `detached' and become vertically visible.
Therefore, the result of Pach and Sharir~\cite{Pach-Sharir} implies that the maximum number of tangencies among $n$ $x$-monotone $1$-intersecting curves is $O\left(n^{4/3}\left(\log n\right)^{2/3}\right)$ and $\Omega\left(n^{4/3}\right)$.

\paragraph{Improving Pach and Sharir's upper bound.}
We outline the proof of the upper bound of Pach and Sharir~\cite{Pach-Sharir} and point out the modifications which lead to the better bounds of Theorem~\ref{thm:x-mon} and~\ref{thm:grounded-x-mon}.
First, the cutting lemma (Theorem~\ref{thm:cutting-lemma}) is used to partition the plane into generalized trapezoids such that every trapezoid is cut by not too many curves.
Then, the number of tangencies within every trapezoid $T_i$ is  estimated and finally the sum over all the trapezoids is bounded.
Regarding tangencies within $T_i$, one considers two subsets of curves that cut $T_i$: `short' curves that have at least one endpoint inside $T_i$ and `long' curves that have no endpoint inside $T_i$.
Therefore, the long curves behave like bi-infinite curves with respect to $T_i$.
Thus, the number of long-long tangencies can be bounded using Theorem~\ref{thm:bi-infinite}.
In order to obtain the tight upper bound of Theorem~\ref{thm:grounded-x-mon} we use for this case the linear bound of Lemma~\ref{lemma:blues-above-reds} below.
The number of short-long tangencies is bounded using K\H{o}v\'ari-S\'os-Tur\'an Theorem and a technical lemma (see Lemma~\ref{lem:long-short} below).
Finally, the number of short-short tangencies is bounded in~\cite{Pach-Sharir} by further subdividing $T_i$ into vertical slabs. However, we observe that one can instead use induction and this simplifies the proof and leads to a better upper bound.
For completeness, we include the detailed proofs.


\subsection{Bi-infinite $1$-intersecting $x$-monotone curves}
\label{sec:infinite-mon}
Let $c_1$ and $c_2$ be two bi-infinite $x$-monotone curves and let $v$ be a vertical line.
We say that $v$ intersects $c_1$ \emph{before} it intersects $c_2$ if the intersection point of $c_1$ and $v$ is below the intersection point of $c_2$ and $v$.
$c_1$ \emph{lies below} $c_2$ if there is no vertical line that intersects $c_2$ before it intersects $c_1$.
We say that $c_1$ \emph{starts below} $c_2$ if $c_1$ lies below $c_2$, or $c_1 \cap c_2 \ne \emptyset$
and any vertical line to the left of the leftmost intersection point of $c_1$ and $c_2$ intersects $c_1$ before it intersects $c_2$.

For convenience, we split Theorem~\ref{thm:bi-infinite} into two statements.

\begin{theorem}\label{thm:pw-bi-infinite}
Let $\L$ be a set of $n$ bi-infinite $x$-monotone curves such that every pair of curves in $\L$ intersect at precisely one point.
Then there are at most $n-1$ tangencies among the curves in $\L$.
This bound is best possible.
\end{theorem}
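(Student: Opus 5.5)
My plan is to show that the tangency graph $G_\L$ is acyclic, hence a forest with at most $n-1$ edges. The natural tool is the ordering of the curves at $x=-\infty$. Since every two curves meet exactly once, the vertical order of the curves on a far-left vertical line $v_-$ is a permutation. On a far-right vertical line $v_+$, two curves appear in the same relative order if they touch and in reversed order if they cross. Thus $G_\L$ is exactly the graph of pairs that are \emph{not} inverted between the left order and the right order, i.e., a permutation-type (comparability) graph. That alone does not bound the edges, so I would use the geometry further.

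The approach I would follow is induction on $n$, removing the curve that is lowest at $v_-$, say $c_1$. I would claim that $c_1$ touches at most one other curve. Suppose $c_1$ touches $c_i$ and $c_j$, which both start above $c_1$, with $c_i$ below $c_j$ on the left and tangency points $p_i,p_j$.

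First, $c_i$ and $c_j$ must intersect exactly once. Consider where that intersection lies relative to the tangencies, using the lower envelope structure. Curves touching $c_1$ from above stay weakly above $c_1$ everywhere, since they meet $c_1$ only at the tangency and start above it. So $c_1$ is the lowest curve on every vertical line, and each tangency point $p_i$ is a point where $c_i$ attains the envelope $c_1$.

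Now, if $p_i$ is left of $p_j$, then at $x(p_j)$ the curve $c_i$ is strictly above $c_j=c_1$, while at $x(p_i)$ the curve $c_i$ is below $c_j$. So $c_i$ and $c_j$ cross between $p_i$ and $p_j$. Far to the left, $c_i$ is below $c_j$. So on $(-\infty,x(p_i)]$ they must not cross, and in $[x(p_i),x(p_j)]$ they cross once. On the right of $p_j$, $c_j$ is below. This is consistent, so we get no contradiction yet.

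Hence the single-curve removal argument is too weak, and I would refine it. Tangent pairs are non-inverted pairs, and a cycle in $G_\L$ is what we need to exclude. The key step I would try is to show that if $a$ touches both $b$ and $c$, then $b$ and $c$ cross between the two tangency points. More strongly, the curves touching a fixed curve from above alternate in a nested way, which would force the graph to be a forest.

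Concretely, I would orient each tangency from the lower curve to the upper curve and argue by sweeping along $x$. Each tangency point merges two adjacent positions in the vertical order momentarily. Treat the tangencies as edges added between adjacent positions of a permutation that evolves by adjacent transpositions, which are the crossings. Every pair swaps exactly once unless it touches, in which case it never swaps. If a cycle of tangencies existed, the cycle's curves would pairwise be either touching (never swapped) or crossing (swapped once). The main obstacle is turning this into a contradiction. I would try to show that two curves $b,c$ both touching $a$ from the same side are separated by $a$'s tangency positions in a way that forces any cycle to contain an inverted-consistent triple, which is impossible.

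For tightness, I would take $n$ curves that are pairwise tangent consecutively along a path: curve $c_{i+1}$ touches $c_i$ from above and crosses all of $c_1,\dots,c_{i-1}$. This can be built inductively, e.g.\ slightly perturbed translates of parabolas, giving $n-1$ tangencies.
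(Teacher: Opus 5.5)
\textbf{The upper bound has a gap: no cycle is ever excluded.} Your plan of showing that $G_\L$ is a forest is the paper's plan. However, the proposal never derives a contradiction from a cycle. You say yourself that turning the sweep picture into a contradiction is the main obstacle, and the ``nested alternation'' and ``inverted-consistent triple'' ideas are not arguments.

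The abandoned first attempt is not merely too weak; its claim is false. For $y=0$ together with $y=|x-i|$, $i=0,\dots,n-2$, the curve lowest at $-\infty$ touches $n-1$ curves. Also, $c_1$ need not be lowest on every vertical line, because curves that cross $c_1$ pass below it.

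Two ingredients are missing.

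First, no curve touches one curve from below and another from above, since those two curves would then lie strictly on opposite sides of it and be disjoint. Hence $G_\L$ is bipartite: red curves touch from below and blue curves touch from above. This is also the qualifier your key step needs: two curves touching $a$ from the \emph{same} side cross between their tangency points.

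Second, you need an extremal choice on a simple cycle $C$. Every curve of $C$ has a left and a right cycle-tangency, $c_L$ and $c_R$. Let $b$ be the blue curve of $C$ that is lowest at $-\infty$ among the blue curves of $C$, and let $r$ be the red curve touching $b$ at $b_R$.
\begin{itemize}
\item If $b_R=r_R$, the other blue neighbour $b'$ of $r$ touches $r$ at $r_L$. Then $b'$ is below $b$ at $x(r_L)$ and above it at $x(r_R)$. Their unique crossing lies between these points, so $b'$ is below $b$ at $-\infty$, contradicting the choice of $b$.
\item If $b_R=r_L$, let $b'$ touch $r$ at $r_R$ and let $r'$ touch $b$ at $b_L$. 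Then $b$ and $b'$ cross at some $X\in(x(r_L),x(r_R))$, and $r'$ and $r$ cross in $(x(b_L),x(b_R))$. Hence $r'\le b<b'$ for $x<X$, and $r'<r\le b'$ for $x\ge x(b_R)$. Since $x(b_R)<X$, these ranges cover the whole line, so $r'$ and $b'$ are disjoint, again a contradiction.
\end{itemize}

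\textbf{The tightness construction fails as stated.} Suppose $c_2$ touches $c_1$ from above and $c_3$ touches $c_2$ from above. Then $c_1\le c_2\le c_3$ pointwise, with equality only at two distinct points. So $c_3$ lies strictly above $c_1$ and cannot cross it; this is exactly the bipartiteness obstruction above. Moreover, two translates of the same parabola never touch: their difference is an affine function of $x$, so they cross once, coincide, or are disjoint. A valid example is the star: $y=0$ together with $y=|x-i|$, $i=0,\dots,n-2$. Each $|x-i|$ touches $y=0$ at $x=i$, and $|x-i|$ and $|x-j|$ cross only at $x=(i+j)/2$.
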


\begin{proof}
It is easy to construct $n-1$ curves such that each of them touches a single additional curve and these $n$ curves are bi-infinite, $x$-monotone and every two of them intersect at exactly one point.
For example, consider the curves that correspond to the graphs of the functions $f(x)=0$ and $f(x)=|x-i|$ for $i=0,1,\ldots,n-2$.

As for the upper bound, let $\L$ be a set of curves as above and observe first that there is no curve that touches one curve from below and another curve from above, for otherwise these two curves do not intersect.
Therefore, $G_\L$ is a bipartite graph and we may refer to the curves that touch some curve from above (resp., below) as the \emph{blue} (resp., \emph{red}) curves. Thus, at every tangency point a red curve touches a blue curve from below.

We show that the tangency graph $G_\L$ is a forest and therefore there are at most $n-1$ tangent pairs.
Suppose for contradiction that there exists a cycle $C$ in $G_\L$.
We may assume without loss of generality that $C$ is a simple cycle, hence, every curve $c$ in $C$ contains two tangency points that correspond to edges in $C$.
We denote by $c_L$ (resp., $c_R$) the left (resp., right) tangency point among them.
Let $b$ be the blue curve in $C$ that starts below every other blue curve in $C$ and let $r$ be the red curve that touches $b$ from below at $b_R$.
	If $b_R=r_R$, then there is a blue curve $b'$ that touches $r$ to the left of $r_{R}$.
However, then $b'$ must intersect $b$ to the right of $b' \cap r$, which implies that $b'$ starts below $b$ (see Figure~\ref{fig:cycle1}).
\begin{figure}[t]
	\centering
	\subfloat[If $b_R=r_R$, then there is a blue curve $b'$ in $C$ that starts below $b$.]{\includegraphics[width= 6cm]{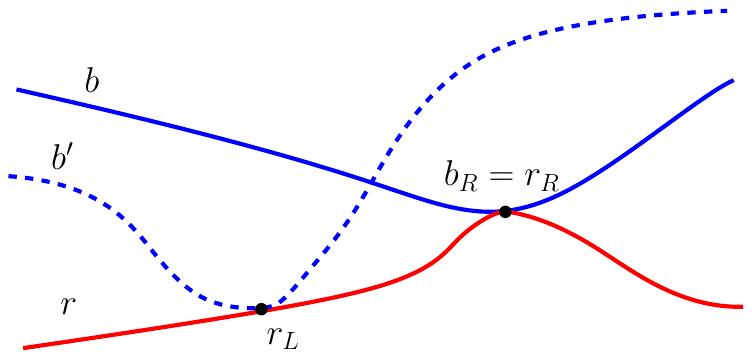}\label{fig:cycle1}}
	\hspace{10mm}
	\subfloat[If $b_R=r_L$, then there are curves $b',r' \in C$ that do not intersect.]{\includegraphics[width= 6cm]{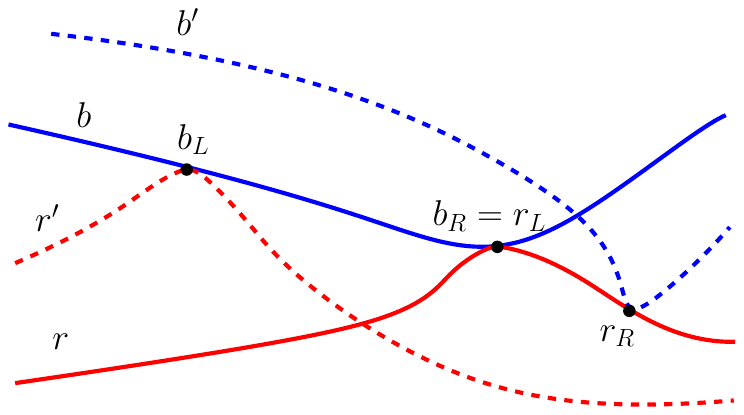}\label{fig:cycle2}}
	\caption{An illustration for the proof of Theorem~\ref{thm:pw-bi-infinite}: If $G_\L$ contains a cycle, then consider the blue curve $b$ that starts below all the other blue curves in the cycle.}
	\label{fig:cycle}		
\end{figure}
If on the other hand $b_R=r_L$, then there is a blue curve $b'$ that touches $r$ to the right of $r_{L}$ and a red curve $r'$ that touches $b$ to the left of $b_R$. However, then $b'$ and $r'$ cannot intersect (see Figure~\ref{fig:cycle2}).
\end{proof}

From Theorem~\ref{thm:pw-bi-infinite} we may conclude a bipartite variant in the spirit of~\cite[Theorem 3.1]{lenses}.

\begin{corollary}
Let $\L=\L_1 \cup \L_2$ be a family of $n$ $x$-monotone bi-infinite $1$-intersecting curves.
Suppose that every curve in $\L_1$ intersects every curve in $\L_2$.
Then the number of tangent pairs $\{\ell_1 \in \L_1,\ell_2 \in \L_2\}$  is at most $2(n-1)$.
\end{corollary}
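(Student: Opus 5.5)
The plan is to reduce the bipartite statement to Theorem~\ref{thm:pw-bi-infinite} by splitting the red--blue tangencies according to which curve lies below. Every tangent pair $\{\ell_1\in\L_1,\ell_2\in\L_2\}$ is of one of two types: either $\ell_1$ touches $\ell_2$ from below, or from above. I will show that the tangency graph restricted to each type is a forest on the vertex set $\L$. Each forest has at most $n-1$ edges, so together they give the bound $2(n-1)$.

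Fix one type, say the red--blue tangent pairs in which the $\L_1$-curve touches the $\L_2$-curve from below. Call this graph $G_1$; it is bipartite with sides $\L_1$ (red) and $\L_2$ (blue). Suppose $G_1$ contains a simple cycle $C$. I will rerun the argument from the proof of Theorem~\ref{thm:pw-bi-infinite} on $C$. That argument only ever needed two facts: every red curve on the cycle touches blue curves of the cycle from below, and certain red--blue pairs (specifically $b'$ with $b$, and $b'$ with $r'$) must intersect.

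Now check which pairs the argument uses. Let $b$ be the blue curve of $C$ that starts below all other blue curves of $C$, and let $r$ be the red curve touching $b$ at $b_R$. There are two cases.
\begin{itemize}
\item If $b_R=r_R$, the argument uses that $b'$ meets $b$ in order to conclude that $b'$ starts below $b$. Here $b,b'$ are both blue, so nothing in our hypothesis guarantees they intersect.
\item If $b_R=r_L$, the argument derives a contradiction from the red--blue pair $r',b'$ not intersecting. This pair does intersect by our hypothesis, so this case goes through unchanged.
\end{itemize}

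So the main obstacle is the first case, where the argument needs two blue curves to intersect and we have no such guarantee. I would handle it by avoiding that reliance. If $b'$ and $b$ are disjoint, then $b'$ lies entirely above or entirely below $b$. Since $b'$ touches $r$ to the left of $r_R$ from above, $b'$ is above $r$ near that point, while $b$ lies above $r$ near $r_R$. This suggests $b'$ must cross $r$ or $b$ in a forbidden way. For instance, if $b'$ is below $b$, then it lies below $b$ everywhere, which contradicts the choice of $b$ as starting lowest. If $b'$ is above $b$ everywhere, then I would argue that $b$, which lies above $r$ near $r_R$, would have to meet $r$ twice or cross it.

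If that local argument fails, I would instead choose the extremal curve more carefully: take $b$ minimal in the ``starts below'' order, comparing only along the portion of the cycle that actually matters. Alternatively, I would apply the same extremal choice symmetrically to the red curves. After ruling out cycles in both types (the second type is symmetric, with the roles of $\L_1$ and $\L_2$ swapped), summing the two forests gives at most $2(n-1)$ tangent pairs.
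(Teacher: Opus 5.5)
\textbf{There is a genuine gap.} Your forest strategy can work, but the step you flag as the main obstacle is never proved. You also misdiagnose it.

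In the case $b_R=r_R$ you need no hypothesis for $b$ and $b'$ to intersect. At the $x$-coordinate of $r_L$, $b'$ is at the height of $r$, while $b$ is strictly above $r$, since $b$ meets $r$ only at $r_R$. At the $x$-coordinate of $r_R$ the roles are reversed. So $b$ and $b'$ are forced to cross between $r_L$ and $r_R$, with $b'$ below $b$ to the left of the crossing. Hence $b'$ starts below $b$, contradicting the choice of $b$.

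Your proposed replacement does not supply this. You dismiss the subcase ``$b'$ entirely above $b$'' by saying that $b$ would have to meet $r$ twice or cross it, but that is not what goes wrong. The actual contradiction is that $b'$ would lie strictly above $b$, which lies weakly above $r$, so $b'$ could not touch $r$. Your fallback ideas (``choose the extremal curve more carefully'', ``apply the same choice to the red curves'') carry no argument.

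For $b_R=r_L$ you assert that the original argument goes through unchanged, but you do not check that the disjointness of $r'$ and $b'$ uses no same-colour intersections. It does not:
\begin{itemize}
\item $r$ and $r'$ both touch $b$ from below, so the same intermediate-value reasoning forces them to cross between $b_L$ and $b_R$.
\item $b$ and $b'$ both touch $r$ from above, so they are forced to cross between $r_L$ and $r_R$.
\item Left of the $b,b'$ crossing we have $r'\le b<b'$. Right of the $r,r'$ crossing we have $r'<r\le b'$. These two ranges cover every $x$.
\end{itemize}
So $r'$ lies strictly below $b'$ everywhere, and only then does the red--blue hypothesis give the contradiction. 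With these two verifications your argument is complete and uses only the red--blue hypothesis; as written, its crucial step is missing.

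For comparison, the paper does not re-run the cycle argument. It observes that the curves taking part in tangencies of one type are pairwise intersecting. If $\ell_1,\ell_1'\in\L_1$ were disjoint with $\ell_1$ above $\ell_1'$, then any $\ell_2\in\L_2$ touching $\ell_1$ from above would miss $\ell_1'$; the argument for $\L_2$ is symmetric. That subfamily is therefore precisely $1$-intersecting, and Theorem~\ref{thm:pw-bi-infinite} applies as a black box. This one-line observation is exactly what would have removed your ``obstacle''.
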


\begin{proof}
Consider first the number of tangent pair $\{\ell_1 \in \L_1,\ell_2 \in \L_2\}$ such that $\ell_1$ touches $\ell_2$ from below.
We claim that the curves in $\L$ that participate in such touchings are pairwise intersecting and therefore by Theorem~\ref{thm:pw-bi-infinite} there are at most $n-1$ such touchings.
Indeed, suppose that there are two curves $\ell_1,\ell'_1 \in \L_1$ that are disjoint and assume without loss of generality that $\ell_1$ lies above $\ell'_1$.
Let $\ell_2 \in \L_2$ be a curve that touches $\ell_1$ from above.
Then $\ell_2$ and $\ell'_1$ do not intersect, which is a contradiction.
In a similar way it follows that every two curves in $\L_2$ intersect.
Therefore, there are $n-1$ tangent pairs $\{\ell_1 \in \L_1,\ell_2 \in \L_2\}$ such that $\ell_1$ touches $\ell_2$ from below, and, symmetrically, there are $n-1$ tangent pairs $\{\ell_1 \in \L_1,\ell_2 \in \L_2\}$ such that $\ell_1$ touches $\ell_2$ from above.
\end{proof}

For families of bi-infinite $x$-monotone curves such that every two curves intersect at most once we need the following lemma.

\begin{lemma}\label{lemma:blues-above-reds}
Let $\L=\L_1 \cup \L_2$ be a family of $n$ $x$-monotone bi-infinite $1$-intersecting curves.
Suppose that every curve in $\L_1$ starts below every curve in $\L_2$.
Then the number of tangent pairs $\{\ell_1 \in \L_1,\ell_2 \in \L_2\}$  is at most $n-1$.
This bound is best possible.
\end{lemma}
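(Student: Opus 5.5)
The plan is to show that the bipartite graph $G$ on $\L$, whose edges are the tangent pairs $\{\ell_1\in\L_1,\ell_2\in\L_2\}$, is a forest. This gives at most $n-1$ edges. For tightness I would reuse the construction from Theorem~\ref{thm:pw-bi-infinite}. Put $f(x)=0$ in $\L_1$ and $f(x)=|x-i|$, $i=0,\dots,n-2$, in $\L_2$. These curves are pairwise $1$-intersecting, the line starts below every V-shape, and there are $n-1$ tangencies.

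First I would normalize the tangencies. Suppose $\ell_1\in\L_1$ touches $\ell_2\in\L_2$. Their tangency point is their only common point, so one curve lies below the other, and since $\ell_1$ starts below $\ell_2$, $\ell_1$ lies below $\ell_2$. Hence every edge of $G$ is a red curve (from $\L_1$) touching a blue curve (from $\L_2$) from below. The remaining red--blue pairs are either disjoint with red below, or cross exactly once with red below on the left and above on the right. Blue curves are totally ordered by "starts below", because two $1$-intersecting bi-infinite curves can be compared at $x\to-\infty$.

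Next I would assume that $G$ has a simple cycle $C$, and use the notation $c_L,c_R$ from the proof of Theorem~\ref{thm:pw-bi-infinite}. Let $b$ be the blue curve of $C$ that starts lowest, and let $r$ be the red curve touching $b$ at $b_R$.

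If $b_R=r_R$, let $b'$ be the blue curve touching $r$ at $r_L$. At $x(r_L)$, $b$ lies strictly above $r$ and $b'$ lies on $r$, so $b$ is above $b'$. Far to the left $b$ is below $b'$, so they cross to the left of $r_L$. At $x(r_R)$ the curve $b$ is on $r$ and $b'$ is strictly above $r$, so $b'$ is above $b$ again. That forces a second crossing, which contradicts $1$-intersection.

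The case $b_R=r_L$ is where I expect the real difficulty. Here there is a blue curve $b'$ touching $r$ at $r_R$ and a red curve $r'$ touching $b$ at $b_L$. Following the orderings as above, $b$ and $b'$ cross exactly once between $x(b_R)$ and $x(r_R)$. However, the contradiction used in Theorem~\ref{thm:pw-bi-infinite} ($r'$ and $b'$ cannot meet) is no longer available, because disjoint red--blue pairs are now allowed. My first attempt would be to change the extremal choice. Either take the leftmost tangency point among the edges of $C$, or choose $b$ and $r$ together to minimize an order-type potential; then follow the cycle from $r'$ back around to $b'$ and show that some blue curve of $C$ would have to start below $b$, or that some pair would be forced to cross twice. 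The tool for each step is the same: every red curve of $C$ stays below each blue curve it touches, and every red--blue crossing goes from red-below to red-above.

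If this local argument fails, my fallback is a sweep argument. I would sweep a vertical line from left to right, keep the vertical order of the curves, and charge each tangency to a merge event between the connected components of the red--blue adjacency structure. The goal is to show that each tangency joins two previously separate components, since the "starts below" hypothesis prevents a red curve from rising back under a blue curve it has already touched in the same component.
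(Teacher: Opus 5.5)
\textbf{Verdict: genuine gap.} Reducing the bound to showing that the red--blue tangency graph is a forest is sound; acyclicity does hold. Your tightness construction and your case $b_R=r_R$ are also correct. The gap is the case $b_R=r_L$, which you leave open, and neither of your proposed fixes resolves it.

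\emph{Your extremal choices fail.} With $b$ the lowest-starting blue curve, take $r\colon y=0$, $b\colon y=|x|$, $b'\colon y=|x-2|$ and $r'\colon y=-x-\min\{\epsilon(x+3)^2,1\}$ for small $\epsilon>0$. These four curves satisfy every hypothesis and realize exactly your second case: $b_L$ is at $x=-3$ with $r'$, $b_R=r_L$ is at $x=0$, $r_R$ is at $x=2$ with $b'$, and $r'$ lies below $b'$. So no contradiction can be extracted from $r',b,r,b'$ alone, and you do not say how to continue around the cycle. Choosing the \emph{leftmost} tangency of $C$ fails in the same way. Replace $r'$ by $y=x-\min\{\epsilon(x-3)^2,1\}$, which touches $b$ at $x=3$ and crosses $b'$ once. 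This is a legal configuration in which both curves through the leftmost tangency $(0,0)$ have their other tangencies to the right. The hypothesis that red starts below blue is not left--right symmetric, so the direction matters. Your sweep fallback only restates the claim.

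\emph{The missing idea is to take the \emph{rightmost} tangency $p$ of $C$.} Say $p$ is the tangency between $r$ and $b$.
\begin{itemize}
\item Then $r$ touches some $b'\in C$ at a point $q$ left of $p$, and $b$ touches some $r'\in C$ at a point $z$ left of $p$.
\item Reflecting in the $x$-axis and swapping colors preserves the hypotheses and exchanges the roles of $q$ and $z$. So assume $x(q)<x(z)$.
\item $r'$ is above $r$ at $x(z)$ and below it at $x(p)$. Hence their unique crossing lies between $x(z)$ and $x(p)$, so $r'$ is strictly above $r$, and therefore above $b'$, at $x(q)$.
\item Since $r'$ starts below $b'$, they cross to the left of $q$, and $r'$ stays above $b'$ afterwards.
\item To the right of the $r$--$r'$ crossing this gives $b'<r'<r$. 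That contradicts $b'$ touching $r$ from above.
\end{itemize}

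This is exactly the paper's proof. The paper phrases it as ``every red--blue tangency is the leftmost one on at least one of its two curves.'' It then counts marked points: there are at most $n-1$, because the globally leftmost tangency is marked by both of its curves. It never argues about cycles; the same fact shows the rightmost tangency of any cycle is impossible, which gives your forest claim.
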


\begin{proof}
Refer to the curves in $\L_1$ as the \emph{red} curves and to the curves in $\L_2$ as the \emph{blue} curves.
We may assume without loss of generality that no two red-blue tangency points have the same $x$-coordinate.
Mark the leftmost red-blue tangency point on every curve.
Since the leftmost red-blue tangency point among these points belongs to two curves, at most $n-1$ points are marked.
We claim that every red-blue tangency point is in fact marked.

Indeed, assume for contradiction that there is an unmarked red-blue tangency point $p$.
Let $r$ and $b$ be the red and blue curves, respectively, that are tangent at $p$. 
Since $p$ is not the leftmost red-blue tangency point on $r$, there is a blue curve $b'$ that touches $r$ to the left of $p$ at a point $q$.
Similarly, there is a red curve $r'$ that touches $b$ to the left of $p$ at a point $z$.
Assume, without loss of generality, that $q$ is to the left of $z$ and refer to Figure~\ref{fig:blues-above-reds}.
\begin{figure}[]
	\centering
	\includegraphics[width= 7cm]{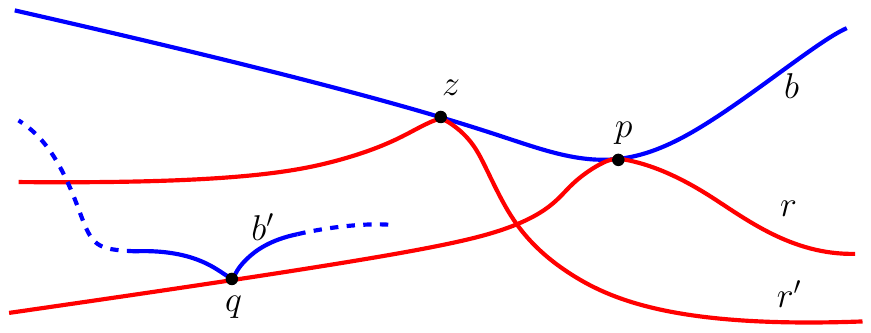}
	\caption{An illustration for the proof of Lemma~\ref{lemma:blues-above-reds}: $p$ is an unmarked red-blue touching point.}
	\label{fig:blues-above-reds}		
\end{figure}
It is easy to see that $r'(z,+)$ and $r(-,p)$ must cross at a point between $z$ and $p$, and therefore $r'$ starts above $r$.
Since $b'$ starts above $r'$, it follows that $b'(-,q)$ crosses $r'(-,q)$.
Therefore, $b'(q,+)$ lies below $r'(q,+)$ which implies that $b'(q,+)$ must cross $r$ since so does $r'(q,+)$.
However, this is impossible since $b'$ and $r$ are tangent.

This concludes the proof of the upper bound.
The construction mentioned in the proof of Theorem~\ref{thm:pw-bi-infinite} shows that this bound is tight.
\end{proof}

Using Lemma~\ref{lemma:blues-above-reds} we obtain a simple proof of the $O(n\log n)$ upper bound of Theorem~\ref{thm:bi-infinite}.
This bound was already proved by Pach and Sharir~\cite[Theorem 2]{Pach-Sharir}, however, using less elementary tools.
For completeness we also describe the lower bound construction in terms of tangent curves.

\begin{theorem}\label{theorem:bi-infinite}
Let $\L$ be a family of $n$ $x$-monotone bi-infinite $1$-intersecting curves.
Then the number of tangencies among the curves in $\L$ is $O(n\log n)$.
This bound is asymptotically tight.
\end{theorem}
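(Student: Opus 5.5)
The upper bound will come from divide and conquer on the ``starts below'' order, with Lemma~\ref{lemma:blues-above-reds} handling the cross pairs. The lower bound will come from a recursive construction.

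\textbf{Upper bound.} First I check that ``starts below'' is a total order on $\L$. For two disjoint curves, one lies below the other. For two intersecting curves, the order is decided by which curve is lower to the left of their unique common point. To see that the relation is transitive, I use a vertical line $v$ far to the left of all intersection points among the curves of $\L$. For two curves that do not intersect, one lies entirely below the other, so their order agrees with the order of their intersections with $v$. For two curves that intersect exactly once, left of the common point the lower one stays lower, so again the order agrees with $v$. Hence ``starts below'' is exactly the bottom-to-top order of the curves along $v$, and this is a linear order.

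Now split $\L$ into $\L_1$, the lowest $\lfloor n/2\rfloor$ curves in this order, and $\L_2$, the remaining curves. Every curve in $\L_1$ starts below every curve in $\L_2$. By Lemma~\ref{lemma:blues-above-reds}, the number of tangent pairs with one curve in $\L_1$ and the other in $\L_2$ is at most $n-1$. The tangencies inside $\L_1$ and inside $\L_2$ I handle recursively. This gives $t(n)\le t(\lfloor n/2\rfloor)+t(\lceil n/2\rceil)+n-1$, and hence $t(n)=O(n\log n)$. The one point that needs care is the linear order, which is settled by the leftmost vertical line argument above.

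\textbf{Lower bound.} I will build the family recursively so that the recursion above is tight up to a constant factor. Let $F(k)$ be a family of $2^k$ such curves with about $k2^{k-1}$ tangencies. To form $F(k+1)$, take two copies of $F(k)$. Squeeze each copy into a thin horizontal strip, so that the curves of a copy stay very close to one another; applying an affine map preserves $x$-monotonicity and the tangencies. Place the second copy above the first.

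Next, add $\Theta(2^k)$ tangencies between the two copies without creating a second intersection between any pair. Mimicking the construction in the proof of Theorem~\ref{thm:pw-bi-infinite}, I pair the $i$-th curve of the lower copy with the $i$-th curve of the upper copy. Far to the right, at a distinct $x$-coordinate $x_i$ for each pair, the lower curve sends up a narrow spike that touches its partner from below. To keep the family $1$-intersecting, the spike of the $i$-th lower curve must not hit any curve twice. I arrange this by making the lower curve stay above the whole upper strip to the right of $x_i$, so that it crosses each other upper curve exactly once. The lower curve then intersects each other upper curve only once, and meets its partner only at the tangency. All of this happens to the right of every old intersection, so the old pattern is not disturbed.

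The main obstacle is making sure that these rising lower curves do not intersect each other twice, and do not create a second intersection with curves of their own copy. I would order the $x_i$ so that the curves rise in the reverse of their vertical order, which makes the rising curves cross each other consistently at most once. If curves of the same copy would meet twice, I would instead use a point--line type configuration as in~\cite{Pach-Sharir}. The recursion $T(2m)\ge 2T(m)+\Omega(m)$ then gives $\Omega(n\log n)$ tangencies.
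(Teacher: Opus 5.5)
\paragraph{Review.} Your upper bound is the paper's argument: order by `starts below', split into halves, use Lemma~\ref{lemma:blues-above-reds} for the cross pairs, and recurse. Your far-left vertical line argument for why `starts below' is a linear order is a correct detail that the paper leaves implicit.

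The lower-bound construction, however, fails as written. You let the $i$-th lower curve $r_i$ touch its partner $b_i$ from below at $x_i$ and then stay above the whole upper strip to the right of $x_i$. But $b_i$ is itself a bi-infinite $x$-monotone curve in that strip. A curve that touches $b_i$ from below at $x_i$ and lies above it further right must meet $b_i$ a second time. So $r_i$ and $b_i$ have two common points: they are not tangent, and the family is not $1$-intersecting.

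Your handling of the lower curves also aims at the wrong condition. Two curves of the same copy may already cross or touch inside $F(k)$, so in the new part they must not meet at all; `crossing each other at most once' is not enough. Likewise, a rising $r_i$ must not pass through lower curves that are still above it in the lower strip. The fallback to a point--line configuration is not an argument.

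Here is a version that works. It gives the same recursion $T(2m)\ge 2T(m)+m$ as the paper's figure-based construction.
\begin{itemize}
\item First replace the tails of all curves, outside an interval containing all intersection points, by horizontal rays at distinct heights. Only then does a vertical affine squeeze really put each copy into a strip.
\item Let $r_1<\dots<r_m$ and $b_1<\dots<b_m$ be the far-right vertical orders of the lower and upper copies. Pick $x_m<x_{m-1}<\dots<x_1$ to the right of all old intersections.
\item At $x_i$, the curve $r_i$ is the topmost lower curve still in the lower strip. It rises, crosses $b_1,\dots,b_{i-1}$, touches $b_i$ from below, and afterwards runs just below $b_i$ and above $b_{i-1}$.
\end{itemize}
Then $r_i$ crosses $b_j$ exactly once for $j<i$, touches $b_i$, and misses $b_j$ for $j>i$. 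It also never meets another lower curve in the new part. The curves $r_1,\dots,r_{i-1}$ lie below it at $x_i$ and later settle below $b_{i-1}$. The curves $r_{i+1},\dots,r_m$ already run just below $b_{i+1},\dots,b_m$, all of which lie above $b_i$.
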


\begin{proof}
Let $f(n)$ be the maximum number of tangencies in a family of $n$ $x$-monotone bi-infinite $1$-intersecting curves.
We prove that $f(n) \le n \log_{2} n$ by induction.
The claim clearly holds for $n=1$.
For $n>1$, let $\L=\{\ell_1,\ell_2,\ldots,\ell_n\}$ be a family of $x$-monotone bi-infinite $1$-intersecting curves such that $\ell_i$ starts below $\ell_j$ for every $i<j$.
Set $\L_1=\{\ell_1,\ldots,\ell_{\lfloor n/2 \rfloor}\}$ and $\L_2=\{\ell_{\lfloor n/2 \rfloor +1},\ldots,\ell_{n}\}$.
By the induction hypothesis there are at most $f(\lfloor \frac{n}{2} \rfloor) \le \lfloor \frac{n}{2} \rfloor \log_2 \lfloor \frac{n}{2} \rfloor$ tangencies among the curves in $\L_1$ and at most $f(\lceil \frac{n}{2} \rceil) \le \lceil \frac{n}{2} \rceil \log_2 \lceil \frac{n}{2} \rceil$ tangencies among the curves in $\L_2$.
By Lemma~\ref{lemma:blues-above-reds} there are at most $n-1$ touching pairs of curves such that one of them belongs to $\L_1$ and the other belongs to $\L_2$.
Therefore the number of tangencies among the curves in $\L$ is at most $\lfloor \frac{n}{2} \rfloor \log_2 \lfloor \frac{n}{2} \rfloor + \lceil \frac{n}{2} \rceil \log_2 \lceil \frac{n}{2} \rceil + n-1 \le n\log_2 n$.

As for the lower bound, we show that for any positive integer $k$ we have $f(2^k) \ge 2^{k-1}k$.
For $k=1$ this clearly holds.
Assume that the claim is true for $k$. Then we can combine two families of $2^k$ curves with at least $2^{k-1}k$ tangencies within each family in the way suggested in Figure~\ref{fig:nlogn-lb}. 
The total number of tangencies among these $2^{k+1}$ curves is therefore $2\cdot2^{k-1}k + 2^k = 2^k(k+1)$.
\begin{figure}[t]
	\centering
	\includegraphics[width= 7cm]{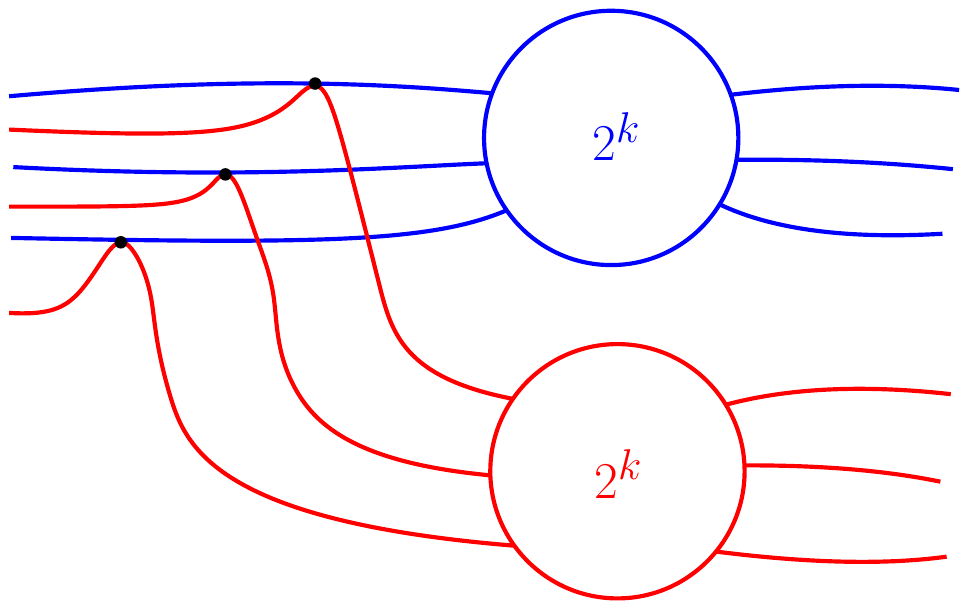}
	\caption{$2^{k+1}$ curves admitting $2^k(k+1)$ tangencies.}
	\label{fig:nlogn-lb}		
\end{figure}
\end{proof}
 
\subsection{Arbitrary $1$-intersecting $x$-monotone curves}
\label{sec:mon}



In this section we prove Theorem~\ref{thm:x-mon}. 
Most ingredients already appear in~\cite{Pach-Sharir}, however, we reproduce them here in order to get a complete proof in the context of curves of our improved upper bound.

\begin{lemma}\label{lem:long-short}
Let $\L=\{a,b,c,d\}$ be a set of four bi-infinite $x$-monotone curves.
Then there are no two $x$-monotone curves $e,f$ such that each of them is tangent to every curve in $\L$ from below and $\L \cup \{e,f\}$ is $1$-intersecting.
\end{lemma}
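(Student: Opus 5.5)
The approach is to compare $e$ and $f$ through the lower envelope $E$ of $\L$, and to force them to meet twice by a sign-change argument on the middle two curves.

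\textbf{Step 1: each of $e$, $f$ stays below $E$ on its domain.} Suppose $e$ is tangent from below to $\ell\in\L$. Then $e$ and $\ell$ have exactly one common point, and it is not a crossing. Since $\ell$ is bi-infinite, $e$ lies weakly below $\ell$ on the whole $x$-interval of $e$, touching it only at the tangency point. Hence $e$ lies below $E$ on its domain and meets $E$ exactly at its four tangency points $p_a,p_b,p_c,p_d$. The same holds for $f$, with tangency points $q_a,\dots,q_d$. In particular, each tangency point lies on the part of $E$ contributed by the corresponding curve.

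\textbf{Step 2: the envelope pieces are disjoint and consistently ordered.} The curves of $\L$ are bi-infinite, $x$-monotone and pairwise intersect at most once. By the standard Davenport--Schinzel argument of order~$1$, each curve appears on $E$ along a single (possibly empty) $x$-interval. If two curves alternated on $E$, they would have to cross twice. So we get pairwise interior-disjoint intervals $I_a,I_b,I_c,I_d$, and after relabeling $I_a<I_b<I_c<I_d$ from left to right. Each $p_\ell$ and each $q_\ell$ lies in $I_\ell$. Thus, left to right, the eight points come in pairs $\{p_a,q_a\},\{p_b,q_b\},\{p_c,q_c\},\{p_d,q_d\}$. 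Within a pair the two points are distinct, since no three curves share a point.

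\textbf{Step 3: sign changes on the middle pair.} Since $e$ and $f$ are $x$-monotone, the domain of $e$ contains $[p_a,p_d]$ and that of $f$ contains $[q_a,q_d]$. Therefore the four middle points $p_b,q_b,p_c,q_c$ all lie in the common $x$-domain of $e$ and $f$, and we may consider $g=e-f$ there.
\begin{itemize}
\item At $p_\ell$ the curve $e$ is on $E$ while $f$ is weakly below $E$, and strictly below since no three curves share a point; so $g(p_\ell)>0$.
\item Symmetrically, $g(q_\ell)<0$.
\end{itemize}
Thus $g$ changes sign between $p_b$ and $q_b$ inside $I_b$, and again between $p_c$ and $q_c$ inside $I_c$. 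These two $x$-intervals are disjoint. So $e$ and $f$ have at least two distinct intersection points, contradicting that $\L\cup\{e,f\}$ is $1$-intersecting.

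This is exactly where four curves are needed. The outer pairs $\{p_a,q_a\}$ and $\{p_d,q_d\}$ may fall outside the common domain of $e$ and $f$, so they only serve to guarantee that the middle ones fall inside it.

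The main obstacle is making Step~2 rigorous, namely that each curve contributes a single interval to $E$ and that the tangency points sit inside these intervals in a consistent order. I would handle the single-interval claim by a direct crossing argument: if $\ell$ appears on $E$ at $x_1<x_3$ but some $\ell'$ lies strictly below $\ell$ at an intermediate $x_2$, then $\ell'$ crosses $\ell$ on both sides of $x_2$. Degenerate situations (several tangency points on the same vertical line, or tangency at an endpoint of $I_\ell$) would be removed first by a small perturbation, which does not change the intersection pattern.
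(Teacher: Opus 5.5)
Your proof is correct and follows essentially the paper's route: place $a,b,c,d$ in order along the lower envelope of $\L$, then force $e$ and $f$ to cross once on each side of $b\cap c$ by comparing which of them lies on the envelope at the various tangency points. The only difference is in the witnesses for the sign changes. The paper gets the left crossing from the tangencies with $a$ and $b$, after a case split on which of $e,f$ touches $a$ first, and the right crossing from $c$ and $d$; you take both sign changes of $e-f$ inside the envelope pieces of $b$ and $c$ and use $a,d$ only to put those points in the common domain, which removes the case split.
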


\begin{proof}
Suppose for contradiction that there exist six curves as above.
Since $e$ and $f$ touch each of $a,b,c,d$ from below, every curve in $\L$ appears on the lower envelope of $\L$.
Assume without loss of generality that $a,b,c,d$ appear in this order from left to right on the lower envelope of $\L$ and let $p=b \cap c$, see Figure~\ref{fig:longs-shorts}.
\begin{figure}[t]
	\centering
	\includegraphics[width= 9cm]{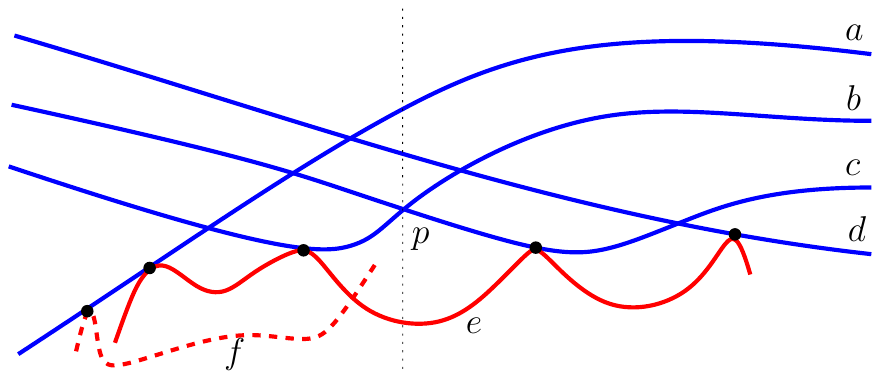}
	\caption{An illustration for the proof of Lemma~\ref{lem:long-short}: if $f \cap a$ is to the left of $e \cap a$, then $f(f \cap a,p)$ must cross $e(e \cap a,p)$ for otherwise $f$ cannot touch $b$.}
	\label{fig:longs-shorts}
\end{figure}

Observe that if $f \cap a$ is to the left of $e \cap a$, then $f(f \cap a,p)$ must cross $e(e \cap a,p)$ for otherwise $f$ cannot touch $b$. Therefore $e(-,p)$ and $f(-,p)$ must intersect.
If $f \cap a$ is to the right of $e \cap a$, then again by symmetry $e(-,p)$ and $f(-,p)$ must intersect.
Similarly, $e(p,+)$ and $f(p,+)$ must intersect, and thus $e$ and $f$ intersect more than once, a contradiction.
\end{proof}

\xmonThm*


\begin{proof}
We prove by induction on $n$ that there at most $c n^{4/3} (\log n)^{1/3}$ tangencies for some absolute constant $c$. 
We choose (later) $n_0$ and $c$ such that the claim holds trivially for $n \le n_0$.
Let $\C$ be a family of $n$ $1$-intersecting $x$-monotone curves where $n > n_0$ and assume that the claim holds for every such family with fewer curves.
Denote by $t(\C)$ the number of tangencies among the curves in $\C$
and set $r=\frac{n^{1/3}}{(\log n)^{2/3}}$.
By Theorem~\ref{thm:cutting-lemma} there is a subset $\C' \subseteq \C$ that induces a partition of the plane into $O\left(\frac{n^{2/3}}{(\log n)^{4/3}}\right)$ generalized trapezoids such that each trapezoid intersects at most $n^{2/3} (\log n)^{2/3}$ curves from $\C$.

Next, we bound the number of tangencies within every trapezoid $t_i$.
Let $\L_i$ be the curves that intersect $t_i$ and both of their endpoints are not in the interior of $t_i$ and let $\S_i$ be the curves that intersect $t_i$ and at least one of their endpoints is in the interior of $t_i$.
Note that $\L_i$ contains at most two curves in $\C'$ that bound $t_i$ from above and below.
Observe also that for every curve $c \in \L_i \cup \S_i$ we have that $c \cap t_i$ consists of a single curve, since the curves are $x$-monotone and each of them intersects at most once the two curves in $\C'$ that bound $t_i$ from below and from above.
Disregard the curves in $\C \setminus (\L_i \cup \S_i)$ and
consider the clipping of the curves in $\L_i$ and $\S_i$ to $t_i$. 
Observe that the curves that correspond to $\L_i$ (clipped to $t_i$) can be easily extended to $x$-monotone bi-infinite curves without introducing new intersection points.
By abuse of notation we denote the new sets of curves by $\L_i$ and $\S_i$.
Let $m_i = |\L_i|$ and $n_i = |\S_i|$.

We bound the number of tangencies within each of the sets $\L_i$ and $\S_i$ and the number of tangencies between curves of different sets:
By Theorem~\ref{theorem:bi-infinite} there are $O(m_i \log m_i)$ tangencies among the curves in $\L_i$.
By induction, there are at most $c n_i^{4/3} (\log n_i)^{1/3}$ tangencies among the curves in $\S_i$. 
By symmetry, it is enough to bound the number of tangent pairs $s_i \in \S_i$, $l_i \in \L_i$ such that $s_i$ touches $l_i$ from below. Consider the bipartite tangency graph $(\L_i \cup \S_i, \{(l_i\in\L_i,s_i\in\S_i) \mid \textrm{$l_i$ and $s_i$ are tangent and $s_i$ touches $l_i$ from below}\})$. 
It follows from Lemma~\ref{lem:long-short} that this graph does not contain four vertices in $\L_i$ such that each of them is adjacent to the same two vertices in $\S_i$. Therefore by the K\"ov\'ari-S\'os-Tur\'an Theorem this graph has at most $O(n_i\sqrt{m_i}+m_i)$ edges.

Thus, $t(\C)$ is at most
\begin{equation}
\sum_{i=1}^{O(r^2)} \left(O(m_i\log m_i) +O(n_i\sqrt{m_i}+m_i) + c n_i^{4/3} (\log n_i)^{1/3}\right).
\end{equation}
Recall that $n_i \le n/r$ and $m_i \le n/r+2 \le 2n/r$.
We also have that $\sum n_i \le 2n$. Therefore, 
\begin{equation}
\sum_{i=1}^{O(r^2)} O(m_i\log m_i) \le O\left(\frac{n^{2/3}}{(\log n)^{4/3}} \frac{2n(\log n)^{2/3}}{n^{1/3}} \log n \right) \le O\left( n^{4/3}(\log n)^{1/3}\right),
\end{equation}
\begin{equation}
	\sum_{i=1}^{O(r^2)} O(n_i\sqrt{m_i}+m_i) \le O\left(2n \sqrt{\frac{2n(\log n)^{2/3}}{n^{1/3}}} + \frac{n^{2/3}}{(\log n)^{4/3}} \frac{2n(\log n)^{2/3}}{n^{1/3}} \right) \le O\left(n^{4/3}(\log n)^{1/3}\right)
\end{equation}
and,
\begin{equation}
	\sum_{i=1}^{O(r^2)} cn_i^{4/3}(\log n_i)^{1/3} \le c\cdot 2n(n/r)^{1/3} (\log n)^{1/3} 
	\le 2c n^{11/9} (\log n)^{5/9}.
\end{equation}

Combining all the above there is a constant $c'$ such that $t(\C) \le c'n^{4/3} (\log n)^{1/3} + 2c n^{11/9} (\log n)^{5/9}$.
If we choose $n_0$ large enough such that $2(\log n)^{2/9} \le n^{1/9}/2$, then we get that
$t(\C) \le c'n^{4/3} (\log n)^{1/3} + \frac{c}{2} n^{4/3} (\log n)^{1/3} \le cn^{4/3} (\log n)^{1/3}$, where the last inequality holds for $c \ge 2c'$.
\end{proof}

\subsection{Grounded $1$-intersecting $x$-monotone curves}
\label{sec:grounded-mon}

In this section we prove Theorem~\ref{thm:grounded-x-mon}.
First we prove a special case using similar arguments to the ones in the proof of Theorem~\ref{thm:x-mon}, then we conclude the general case by induction.

\begin{theorem}\label{thm:red-blue-x-mono-grounded}
	Let $\C = \C_1 \cup \C_2$ be a family of $n$ $1$-intersecting $x$-monotone curves such that the left endpoint of every curve in $\C$ lies on a common vertical line $\ell$ and for every pair of curves $c_1 \in \C_1$ and $c_2 \in C_2$ the left endpoint of $c_1$ is lower than the left endpoint of $c_2$.
	Then, there are at most $d n^{4/3}$ tangent pairs $\{c_1 \in C_1, c_2 \in C_2\}$ for some absolute constant $d$.
\end{theorem}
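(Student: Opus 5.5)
We follow the scheme of the proof of Theorem~\ref{thm:x-mon}, by induction on $n$ with the aim $t(\C_1,\C_2)\le d n^{4/3}$, where $t(\C_1,\C_2)$ counts red-blue tangencies (red $=\C_1$, blue $=\C_2$). The gain is that the long-long term becomes linear instead of $O(m_i\log m_i)$, so the $\log$ factors disappear. We apply Theorem~\ref{thm:cutting-lemma} with $r=n^{1/3}$. This gives $O(n^{2/3})$ generalized trapezoids, each cut by at most $n^{2/3}$ curves. In a trapezoid $t_i$ we split the curves into long curves $\L_i$ (no endpoint inside $t_i$, clipped and extended to bi-infinite $x$-monotone curves) and short curves $\S_i$. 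Set $m_i=|\L_i|$ and $n_i=|\S_i|$.

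\textbf{Long-long tangencies.} Here we use Lemma~\ref{lemma:blues-above-reds} in place of Theorem~\ref{theorem:bi-infinite}. It suffices to show that within $\L_i$ every red curve starts below every blue curve. Take a red $c_1$ and a blue $c_2$ that both cross $t_i$. Their left endpoints lie on $\ell$ with $c_1$ lower. Since they are $1$-intersecting and $x$-monotone, to the left of their unique intersection point (if it exists) $c_1$ stays below $c_2$. The clipped and extended versions inherit this order: the extension to the left of $t_i$ can be done so that the vertical order at the left wall of $t_i$ is preserved, and that order agrees with the order of the original curves there. This needs a short check when the crossing point of $c_1$ and $c_2$ lies to the left of $t_i$. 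In that case, inside $t_i$ the curve $c_1$ is above $c_2$, so on the extension it starts above. We handle this by splitting the red-blue long pairs of $t_i$ into those whose crossing lies left of $t_i$ and the rest. Alternatively, we apply Lemma~\ref{lemma:blues-above-reds} with the colour roles swapped: pairs that are ordered with blue below red in $t_i$ and never cross there contribute no tangency of the problematic kind. Either way the long-long red-blue tangencies in $t_i$ number $O(m_i)$, which sums to $O(r^2\cdot n/r)=O(n^{4/3})$. This ordering/extension argument is the main obstacle, and I would settle it carefully first, possibly by bounding the pairs that cross left of $t_i$ via Lemma~\ref{lemma:blues-above-reds} applied to the reversed order.

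\textbf{Short-long tangencies.} These are handled exactly as before. By Lemma~\ref{lem:long-short} and the K\H{o}v\'ari-S\'os-Tur\'an Theorem there are $O(n_i\sqrt{m_i}+m_i)$ of them in $t_i$. Using $\sum n_i\le 2n$ and $m_i\le 2n/r$, the total is $O(n\sqrt{n^{2/3}}+n^{4/3})=O(n^{4/3})$.

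\textbf{Short-short tangencies.} The clipped short curves need not be grounded at $\ell$, so we cannot apply induction for this theorem directly. Instead we bound them with Theorem~\ref{thm:x-mon}: $O(n_i^{4/3}(\log n_i)^{1/3})$ per trapezoid. Summing with $n_i\le n^{2/3}$ and $\sum n_i\le 2n$ gives $O(n\cdot n^{2/9}(\log n)^{1/3})=o(n^{4/3})$. No induction is needed after all, and adding the three terms yields $d n^{4/3}$ for a suitable absolute constant $d$.
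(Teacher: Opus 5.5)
\textbf{Gap: the long-long step.} Your short-long bound matches the paper. Bounding the short-short term with Theorem~\ref{thm:x-mon} is a valid substitute for induction, giving $O(n^{11/9}(\log n)^{1/3})$ in total; induction would also work if applied to the \emph{unclipped} curves of $\S_i$, which still satisfy the hypotheses. The genuine gap is the long-long step, which is exactly where the $\log$ factor must disappear, and you leave it unresolved.

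Lemma~\ref{lemma:blues-above-reds} is a statement about two \emph{families of curves}. It assumes that \emph{every} curve of $\L_1$ starts below \emph{every} curve of $\L_2$. Its proof uses this for a pair $(r',b')$ that need not be tangent (``since $b'$ starts above $r'$, $b'(-,q)$ crosses $r'(-,q)$''). Once you clip the long curves at the left wall of $t_i$, red-blue pairs that crossed to the left of $t_i$ violate the hypothesis.
\begin{itemize}
\item Discarding those \emph{pairs}, your first option, leaves no pair of families to apply the lemma to, since the remaining pairs do not form a product $\L_1\times\L_2$.
\item Splitting the \emph{curves} instead, e.g.\ recursively by their order at the left wall, is the divide-and-conquer of Theorem~\ref{theorem:bi-infinite}. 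It gives only $O(m_i\log m_i)$, which sums to $O(n^{4/3}\log n)$.
\item Your second option (swapping colours) does not help. The offending pairs are non-tangent pairs with red above blue, and after the swap all tangent pairs violate the hypothesis instead.
\end{itemize}

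\textbf{The missing idea.} This is what the paper does: do not clip the long curves on the left at all. For each $c\in\L_i$, keep the whole portion of $c$ from its left endpoint on $\ell$ up to its rightmost point in $t_i$, and extend it by horizontal rays. The left rays start at distinct points of the common vertical line $\ell$, with red ones below blue ones. So every red curve starts below every blue curve in the extended family, and Lemma~\ref{lemma:blues-above-reds} directly gives at most $m_i-1$ red-blue tangencies. These include all long-long red-blue tangencies in $t_i$.

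Alternatively, you could keep your clipping (with extensions creating no new intersections) and argue as follows.
\begin{itemize}
\item Observe that in the clipped family every red-blue pair either has the red curve starting below the blue one, or is disjoint with blue below red.
\item Check that the proof of Lemma~\ref{lemma:blues-above-reds} uses its hypothesis only to conclude that $b'(q,+)$ lies below $r'(q,+)$ (and symmetrically that $r'(z,+)$ lies above $b'(z,+)$). This is automatic for such disjoint pairs.
\end{itemize}
That route, however, requires reopening the lemma rather than citing it. With either fix supplied, the rest of your argument goes through.
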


\begin{proof}
	We prove by induction on $n$. 
	We choose (later) $n_0$ and $d$ such that the claim holds trivially for $n \le n_0$.
	Let $\C$ be a family of $n$ curves as in the statement of the theorem where $n > n_0$ and assume that the claim holds for every such family with fewer curves.
	We refer to the curves in $\C_1$ and $\C_2$ as \emph{red} and \emph{blue}, respectively.
	Denote by $t(\C)$ the number of red-blue tangencies among the curves in $\C$
	and set $r=n^{1/3}$.
	By Theorem~\ref{thm:cutting-lemma} there is a subset $\C' \subseteq \C$ that induces a partition of the plane into $O(n^{2/3})$ generalized trapezoids such that each trapezoid intersects at most $n/r=n^{2/3}$ curves from $\C$.
	
	Next, we bound the number of red-blue tangencies within every trapezoid $t_i$.
	Let $\L_i$ be the curves that intersect $t_i$ and both of their endpoints are not in the interior of $t_i$ and let $\S_i$ be the curves that intersect $t_i$ and one of their endpoints is in the interior of $t_i$.
	Recall that $\L_i$ contains at most two curves in $\C'$ that bound $t_i$ from above and below.
	Recall also that for every curve $c \in \L_i \cup \S_i$ we have that $c \cap t_i$ consists of a single curve, since the curves are $x$-monotone and each of them intersects at most once the two curves in $\C'$ that bound $t_i$ from below and from above.
	Disregard the curves in $\C \setminus (\L_i \cup \S_i)$ and
	consider the part of each curve in $\S_i$ between its leftmost and its rightmost intersection point with $t_i$.
	As for the curves in $\L_i$, we consider their part from $\ell$ to their rightmost intersection with $t_i$ and extend them leftwards and rightwards by horizontal rays into bi-infinite $x$-monotone curves.
	By abuse of notation we denote the new sets of curves by $\L_i$ and $\S_i$.
	Let $m_i = |\L_i|$ and $n_i = |\S_i|$.
	
	We bound the number of red-blue tangencies within each of the sets $\L_i$ and $\S_i$ and the number of tangencies between curves of different sets:
	By Lemma~\ref{lemma:blues-above-reds} there are at most $m_i-1$ red-blue tangencies among the curves in $\L_i$. 
	By induction, there are at most $d n_i^{4/3}$ red-blue tangencies among the curves in $\S_i$. 
	Consider the bipartite tangency graph $(\L_i \cup \S_i, \{(l_i\in\L_i,s_i\in\S_i) \mid \textrm{$l_i$ and $s_i$ form a red-blue tangency such that $s_i$ is red and $l_i$ is blue}\})$.
	By symmetry, it is enough to bound the number of this type of red-blue tangencies.
	It follows from Lemma~\ref{lem:long-short} that this graph does not contain
	four vertices in $\L_i$ such that each of them is adjacent to the same two vertices in $\S_i$. Therefore by the K\H{o}v\'ari-S\'os-Tur\'an Theorem~\cite{Kovari} this graph has at most $O(n_i\sqrt{m_i}+m_i)$ edges.
		
	Thus,
	\begin{equation}
		t(\C) \le \sum_{i=1}^{O(r^2)} \left(m_i +O(n_i\sqrt{m_i}+m_i) + d n_i^{4/3} \right).
	\end{equation}
	Recall that $n_i \le n/r$ and $m_i \le n/r+2 \le 2n/r$.
	We also have that $\sum n_i \le n$. Therefore, 
	\begin{equation}
		\sum_{i=1}^{O(r^2)} O(n_i\sqrt{m_i}+m_i) \le O(n\sqrt{2n^{2/3}} + n^{2/3}\cdot2n^{2/3}) \le O(n^{4/3})
	\end{equation}
	and,
	\begin{equation}
		\sum_{i=1}^{O(r^2)} dn_i^{4/3} \le d(n/r)^{1/3} \sum_{i=1}^{O(r^2)} n_i \le dn^{11/9}.
	\end{equation}
	
	Combining the above, there is a constant $d'$ such that $t(\C) \le d'n^{4/3} + d n^{11/9} $.
	If we choose $n_0$ large enough such that $n^{1/9}/2 \ge 1$, then we get that
	$t(\C) \le d'n^{4/3} + \frac{d}{2} n^{4/3} \le dn^{4/3}$, where the last inequality holds for $d \ge \max\{2d', n_0^{2/3}/2\}$.
\end{proof}

	From Theorem~\ref{thm:red-blue-x-mono-grounded} we can conclude the same upper bound for grounded $x$-monotone curves.

\groundedXmonThm*

\begin{proof}
	Let $t(n)$ be the maximum number of tangencies among a family of $n$ curves as in the statement of the theorem.
	By induction and Theorem~\ref{thm:red-blue-x-mono-grounded} we get the recursive relation $t(n) \le 2t(n/2) + O(n^{4/3})$ whose solution is $t(n) \le O(n^{4/3})$.
	
	As for the lower bound, consider the famous construction of $n$ lines and $n$ points that exhibits $\Omega(n^{4/3})$ point-line incidences (see, e.g., \cite{pachbook}).
	Let $\ell$ be a vertical line to the left of all the points.
	At each point $p$ of the set of points fix the center of a small disk $D_p$ and redraw the lines incident to $p$, denote them by $\L_p$, as $1$-intersecting $x$-monotone curves starting at $\ell$ such that each of them contributes a segment to their upper envelope which lies within $D_p$. 
	Replace $p$ with an $x$-monotone curve $c_p$ that follows the line with the smallest slope in $\L_p$ from $\ell$ to $D_p$ at some small distance, and then touches each of the curves in $\L_p$. Curves that follow the same line are drawn at different distances from that line such that they do intersect. See Figure~\ref{fig:gounded-mono-lb} for an illustration.
	\begin{figure}[t]
		\centering
		\includegraphics[width= 7cm]{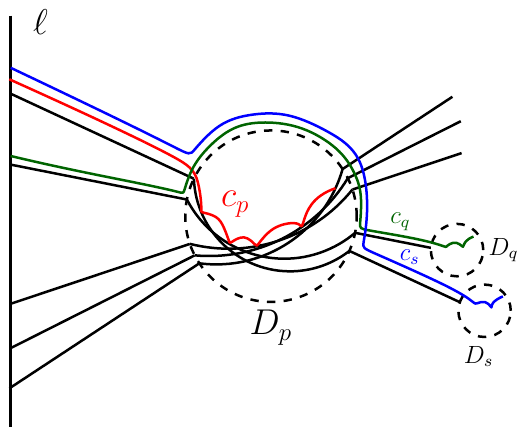}
		\caption{A construction of $n$ grounded bi-infinite $x$-monotone $1$-intersecting curves admitting $\Omega(n^{4/3})$ tangencies.}
		\label{fig:gounded-mono-lb}		
	\end{figure}
\end{proof}

\begin{corollary}
	The maximum number of tangencies among a family of $n$ $1$-intersecting $x$-monotone curves such that the intersection of their projections on the $x$-axis is nonempty is $\Theta(n^{4/3})$.
\end{corollary}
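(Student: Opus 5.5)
The lower bound is inherited directly from the construction in the proof of Theorem~\ref{thm:grounded-x-mon}. There all curves start at the common vertical line $\ell$ and extend to the right beyond every disk $D_p$. We can therefore truncate or extend them so that all of them contain a common vertical line $\ell'$ slightly to the right of $\ell$. This gives $\Omega(n^{4/3})$ tangencies among $n$ curves whose $x$-projections share a common point.

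For the upper bound, fix a vertical line $\ell$ at some $x$-coordinate $x_0$ lying in the common intersection of the projections. We may assume $\ell$ avoids all tangency points and all crossing points, since there are finitely many and the intersection of the projections is a closed interval. If that interval is a single point, we perturb slightly, or else simply count tangencies on either side.

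Cut every curve $c$ at $\ell$ into a left part $c^L$ (the part with $x\le x_0$) and a right part $c^R$ (the part with $x \ge x_0$). Each tangency of $c_1,c_2$ is then either a tangency between $c_1^R$ and $c_2^R$ or between $c_1^L$ and $c_2^L$, because a right part and a left part meet only on $\ell$, where no tangency lies.

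The family $\{c^R\}$ consists of $n$ $x$-monotone $1$-intersecting curves whose left endpoints all lie on $\ell$. Being subcurves, they remain $1$-intersecting, and a tangency of the original pair stays a tangency: the unique common point is still unique and is still non-crossing. Theorem~\ref{thm:grounded-x-mon} therefore bounds these tangencies by $O(n^{4/3})$.

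The family $\{c^L\}$ is handled the same way after reflecting in $\ell$ (applying $x\mapsto -x$). This makes the right endpoints on $\ell$ into left endpoints and preserves $x$-monotonicity, intersections and tangencies. Summing the two contributions gives $O(n^{4/3})$.

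The only delicate point is checking that tangency is preserved under restriction, since a sub-pair could become tangent where the original pair crossed. We are only counting original tangent pairs, though, and each such pair still touches in exactly one of the halves. So we bound the number of original tangent pairs by the number of tangent pairs in the two derived families, and there is no real obstacle.
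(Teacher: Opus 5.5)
Your argument is correct and is the intended one. The paper states this corollary without proof as an immediate consequence of Theorem~\ref{thm:grounded-x-mon}: cut at a vertical line in the common $x$-range and apply that theorem to the right parts and, after reflection, to the left parts; the grounded construction already gives the lower bound, since all its curves start on $\ell$, so no truncation or extension is needed. In the degenerate case where the common interval is a single point, the cleanest fix is that each curve meets $\ell$ only once, so at most $n/2$ tangencies lie on $\ell$.
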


\subsection{Bi-infinite $k$-intersecting $x$-monotone curves}
\label{sec-bi-infinite-k}

In light of Theorem~\ref{thm:pw-bi-infinite} and ~\cite[Theorem 2.4]{lenses} it is tempting to suggest that a similar statement holds for every fixed $k$.

\begin{conjecture}\label{conj:k-intersecting-x-monotone}
Every set of $n$ pairwise intersecting bi-infinite $x$-monotone $k$-intersecting curves admits $O_k(n)$ tangencies.
\end{conjecture}

We were unable to settle this conjecture.
Still, we mention a simple and weak polynomial upper bound and two observations regarding the conjecture.
For the upper bound we use the following observation.

\begin{observation}\label{obs:bi-infinite}
	Let $r$ and $r'$ be two bi-infinite $x$-monotone curves touching from below an $x$-monotone curve $b$. Then $r$ and $r'$ cross at a point whose $x$-coordinate is between $b \cap r$ and $b \cap r'$.
\end{observation}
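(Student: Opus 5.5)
The plan is an intermediate value argument applied to the difference function of the two lower curves. Let $r$ and $r'$ be graphs of continuous functions $g$ and $g'$ over $\mathbb{R}$. Let $b$ be the graph of $h$ over some interval $I$.

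Let $x_1$ be the $x$-coordinate of $b\cap r$ and $x_2$ that of $b\cap r'$; assume without loss of generality that $x_1<x_2$. If $x_1=x_2$, both curves pass through the same point of $b$ and the claim is trivial, at least in the sense of a common point.

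Since $r$ touches $b$ from below at $x_1$ and the two curves share only that point, we have $g(x_1)=h(x_1)$ and $g(x)<h(x)$ for all other $x\in I$. The same holds for $r'$: $g'(x_2)=h(x_2)$ and $g'<h$ elsewhere on $I$. Both $x_1$ and $x_2$ lie in $I$, and $I$ is an interval, so $[x_1,x_2]\subseteq I$.

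Now set $\phi=g-g'$, which is continuous on $[x_1,x_2]$.
\begin{itemize}
\item At $x_1$: $\phi(x_1)=h(x_1)-g'(x_1)>0$, because $g'(x_1)<h(x_1)$ when $x_1\neq x_2$.
\item At $x_2$: $\phi(x_2)=g(x_2)-h(x_2)<0$.
\end{itemize}
By the intermediate value theorem, $\phi$ vanishes at some $x^*\in(x_1,x_2)$. So $r$ and $r'$ meet at a point whose $x$-coordinate lies strictly between $x_1$ and $x_2$.

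The remaining step, and the only delicate one, is to check that this meeting point is a genuine crossing and not just a touching. The sign pattern of $\phi$ gives this. Let $z$ be the largest zero of $\phi$ in $[x_1,x_2]$; then $\phi<0$ on $(z,x_2]$. Let $z'$ be the smallest zero; then $\phi>0$ on $[x_1,z')$. Hence $r$ lies above $r'$ just to the left of the zero set and below it just to the right.

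In the setting where the observation will be used, $r$ and $r'$ are $k$-intersecting, so the zero set is finite. The sign of $\phi$ then changes an odd number of times across it, so at least one zero is a point where the sign flips, which is a crossing point. If "cross" is read more weakly, as "have a common point", the intermediate value step alone already suffices.
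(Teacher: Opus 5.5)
Your proof is correct. The paper states this as an observation without proof, and your intermediate value argument on $g-g'$ is exactly the routine justification it leaves to the reader. The upgrade to a genuine sign change uses finiteness of intersections, which is a standing assumption of the paper, so you do not need $k$-intersection for it.

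The only thing to fix is the case $x_1=x_2$. It is not ``trivial'': for $b$ the line $y=0$ and $r,r'$ the graphs of $y=-x^2$ and $y=-2x^2$, the curves $r$ and $r'$ touch rather than cross. This case is instead ruled out by the paper's standing assumption that no three curves share a common point, and you should cite that.
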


By Theorem~\ref{thm:KP} we have $O_k\left(n^{2-\frac{1}{k+3}}\right)$ tangencies among any $n$ $k$-intersecting curves.
One can slightly improve upon this bound in case of bi-infinite $x$-monotone curves.

\begin{proposition}\label{prop:bi-infinite-k-intersecting-bound}
	Let $\L$ be a family of $k$-intersecting bi-infinite $x$-monotone curves.
	Then $\L$ admits $O_k\left(n^{2-\frac{1}{k+1}}\right)$ tangencies.
\end{proposition}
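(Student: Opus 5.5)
The plan is to show that the tangency graph, restricted to one orientation, contains no $K_{2,k+1}$ in a suitable sense. Then we apply the K\H{o}v\'ari-S\'os-Tur\'an Theorem with $t=k+1$ to get $O_k(n^{2-\frac{1}{k+1}})$.

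First, split the tangencies by type. A tangency between two $x$-monotone curves has one curve touching the other from below. So it suffices to bound the number of ordered pairs $(r,b)$ such that $r$ touches $b$ from below. Consider the bipartite graph $G$ with two copies of $\L$, the \emph{lower} copy and the \emph{upper} copy. We join $r$ in the lower copy to $b$ in the upper copy if $r$ touches $b$ from below. We want to show that $G$ contains no $K_{s,t}$ with $t=k+1$ curves on one side and a constant number $s$ (depending on $k$) on the other side. By the K\H{o}v\'ari-S\'os-Tur\'an Theorem this gives $|E(G)| = O_k(n^{2-1/(k+1)})$, which is what we need.

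The key step uses Observation~\ref{obs:bi-infinite}. Suppose two curves $b,b'$ (in the upper copy) are both touched from below by the same $k+1$ curves $r_1,\dots,r_{k+1}$. Order the $r_j$ by the $x$-coordinate of their tangency with $b$. Consecutive pairs then cross at points whose $x$-coordinates lie between consecutive tangency points on $b$. A cleaner route is to fix two curves $r,r'$ and count the curves $b$ they both touch from below. Each such $b$ forces a crossing of $r$ and $r'$ in the $x$-interval between $b\cap r$ and $b\cap r'$.

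To use this, take $r,r'$ and $k+1$ common upper neighbours $b_1,\dots,b_{k+1}$. Each interval $I_j$ spanned by $b_j\cap r$ and $b_j\cap r'$ contains an $x$-coordinate of a crossing of $r$ and $r'$. There are at most $k$ such crossings. Hence by pigeonhole two intervals $I_j, I_{j'}$ contain the same crossing point $p$.

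The main obstacle is ruling out this configuration. Here two curves $b,b'$ both touch $r$ and $r'$ from below, on opposite sides of the same crossing $p$ of $r$ and $r'$ (or interleaved around it). I would argue as follows. Near $p$ the lower envelope of $r,r'$ switches from one curve to the other. Both $b$ and $b'$ lie weakly above that lower envelope on their spans, and each touches $r$ on one side of $p$ and $r'$ on the other. I would then try to show that $b$ and $b'$ are forced either to coincide in a way violating general position, or to have their tangencies ordered consistently.

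If this direct argument fails, I would fall back on a larger forbidden configuration. We could take $s=k+1$ and $t=k+1$, or more lower curves, and let the pigeonhole run over pairs of $r$'s. Then some pair $b,b'$ must share crossing-intervals with too many pairs of $r$'s. This would eventually contradict the fact that $b$ and $b'$ cross at most $k$ times. Each shared interval around a crossing of $r_i,r_{i'}$ should force $b$ and $b'$ to cross, by an argument like the proof of Lemma~\ref{lem:long-short}. Since the K\H{o}v\'ari-S\'os-Tur\'an bound only needs $s$ to be a constant depending on $k$, allowing $s$ large costs nothing asymptotically.
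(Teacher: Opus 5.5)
\textbf{There is a genuine gap.} Your doubled vertex set (a lower and an upper copy of the family) is a fine substitute for the paper's random bipartition. You are also right that the K\H{o}v\'ari--S\'os--Tur\'an theorem only needs $t=k+1$ on one side and some constant $s=s(k)$ on the other.

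The gap is the forbidden-subgraph step, which is the entire content of the proof. Your direct route tries to show that two lower curves $r,r'$ cannot have $k+1$ common upper neighbours, i.e.\ that no two intervals $I_j,I_{j'}$ contain the same crossing of $r$ and $r'$. This is false for every $k\ge 2$. Here is a counterexample:
\begin{itemize}
\item Let $b_j(x)=L_j(x^2)$ for $j=1,\dots,t$, where $L_j$ is the tangent line of $\sqrt{u}$ at $u=t+1-j$. Any two $b_j$'s cross exactly twice, and their lower envelope $E$ reads $b_1b_2\cdots b_t\cdots b_2b_1$ from left to right.
\item Choose $x_1<\dots<x_t<0$ such that $b_j$ is the unique lowest curve at $x_j$. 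Put $r=E-\prod_j(x-x_j)^2$ and $r'(x)=r(-x)$.
\item Since $E$ is even, $r-r'$ is odd and positive on $x<0$, so $r$ and $r'$ cross only at $x=0$.
\item The curve $r$ (resp.\ $r'$) meets $b_j$ only at $x_j$ (resp.\ $-x_j$), touching it from below.
\end{itemize}
This $2$-intersecting family contains $K_{2,t}$ (two lower, $t$ upper) for every $t$, with all intervals $I_j=[x_j,-x_j]$ nested around the single crossing. So the configuration you hope to exclude really occurs.

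Your fallback does not repair this. It pigeonholes over pairs of $r$'s and asserts that a shared interval ``forces $b$ and $b'$ to cross,'' but it gives no localization of that crossing. Any two upper curves with a common lower neighbour must cross somewhere anyway, and in the example $b_1$ and $b_2$ cross far from $x=0$. So nothing in the fallback yields $k+1$ \emph{distinct} crossings of $b$ and $b'$.

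\textbf{The missing idea} is to fix the $k+1$ \emph{upper} curves $B'$ and pigeonhole over the pieces of their lower envelope, rather than over crossings of two lower curves.
\begin{itemize}
\item A bi-infinite curve touching a bi-infinite $b$ from below at their only common point lies below $b$ everywhere. Hence every common lower neighbour of $B'$ touches each $b\in B'$ at a point of the lower envelope of $B'$.
\item This envelope has $O_k(1)$ pieces (by Davenport--Schinzel, or even a trivial $O(k^3)$ bound).
\item So among $c_k=1+\binom{\lambda_{k+2}(k+1)}{k+1}$ common lower neighbours, two of them, $r$ and $r'$, touch exactly the same $k+1$ pieces.
\item Observation~\ref{obs:bi-infinite}, applied to each such piece, gives a crossing of $r$ and $r'$ inside that piece's open $x$-projection. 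These projections are pairwise disjoint, so $r$ and $r'$ cross $k+1$ times, a contradiction.
\end{itemize}
This forbids $K_{c_k,k+1}$ with the $k+1$ vertices on the upper side and the large side $c_k$ on the lower side, which is exactly where your example shows a large constant is needed. The K\H{o}v\'ari--S\'os--Tur\'an theorem with $t=k+1$ then finishes the proof.
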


\begin{proof}
	Note that the curves in $\L$ are not necessarily pairwise intersecting.
Consider the tangency graph $G_\L$ and flip a fair coin for every curve $c \in \L$. If the outcome is `heads', delete all the edges that correspond to tangencies in which $c$ touches another curve from below.
Otherwise, if the outcome is `tails' do the same for tangencies in which $c$ touches a curve from above.
Clearly, every edge in $G_\L$ survives with probability $1/4$. Therefore, there is a bipartite  subgraph $(B \cup R, E)$ with at least $|E(G_\L)|/4$ edges in which every curve that corresponds to a vertex $r \in R$ touches from below the curves corresponding to its neighbors in $B$.
It suffices to bound the size of this subgraph.
For convenience, we perturb the curves in $\L$, detaching tangencies that do not correspond to edges of $(B \cup R, E)$. 

	We claim that $(B \cup R, E)$ does not contain $K_{k+1,c_k}$ as a subgraph, for some large enough $c_k$ to be determined later.
	Indeed, suppose without loss of generality that there is a subset of $c_k$ curves $R'=\{r_1,r_2,\ldots,r_{c_k}\}\subseteq R$ each of which touches from below each of the $k+1$ curves in a subset $B'=\{b_1,b_2,\ldots,b_{k+1}\}\subseteq B$.
	Let $\ell$ be the lower envelope of the curves in $B'$.
	Then $\ell$ consists of at most $\lambda_{k+2}(k+1)$ curve-segments, where $\lambda_s(n)$ denotes that maximum length of an $(n,s)$-Davenport-Schinzel sequence~\cite{DS}.\footnote{A sequence of letters over an $n$-element alphabet is an $(n,s)$-\emph{Davenport-Schinzel sequence} if it does not contain two consecutive identical letters and no alternating sub-sequence of two letters whose length is $s+2$. In fact a trivial bound of $O(k^3)$ on the size of the lower envelope also suffices for our purpose.}
	Note that each curve $r' \in R'$ touches $k+1$ of these curve-segments (which belong to $k+1$ different curves in $B'$).
	Therefore, there are at most $\binom{\lambda_{k+2}(k+1)}{k+1}$ possibilities for the $(k+1)$-subset of curve-segments of $\ell$ that a curve in $R'$ touches.
	Setting $c_k = 1+ \binom{\lambda_{k+2}(k+1)}{k+1}$
	implies that there are two curves $r',r'' \in R'$ that touch the same $k+1$ curve-segments of $\ell$.
	Consider one of these curve-segments $s$. Then it follows from Observation~\ref{obs:bi-infinite} that $r'$ and $r''$ cross at a point whose $x$-coordinate is between $s \cap r'$ and $s \cap r''$.
	Since the $k+1$ curve-segments that $r'$ and $r''$ touch have distinct $x$-projections, it follows that $r'$ and $r''$ cross at least $k+1$ times, which is impossible.
	Thus, $(B \cup R,E)$ is a $K_{k+1,c_k}$-free graph and it follows from K\H{o}v\'ari-S\'os-Tur\'an Theorem that it has at most $O_k\left(n^{2-\frac{1}{k+1}}\right)$ edges.
\end{proof}

The next observation says that if Conjecture~\ref{conj:k-intersecting-x-monotone} is true, then the same statement holds without requiring the curves to be bi-infinite.

\begin{proposition}
Suppose that Conjecture~\ref{conj:k-intersecting-x-monotone} holds for some $k>2$.
Then every set of $n$ pairwise intersecting $x$-monotone $(k-2)$-intersecting curves admits $O_k(n)$ tangencies.
\end{proposition}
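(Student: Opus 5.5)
Given a family $\C$ of $n$ pairwise intersecting $x$-monotone $(k-2)$-intersecting curves, I will extend every curve to a bi-infinite $x$-monotone curve so that the new family $\C'$ is pairwise intersecting and $k$-intersecting. I will also make sure that no existing tangency is destroyed and that the extensions create no new tangencies. Conjecture~\ref{conj:k-intersecting-x-monotone} applied to $\C'$ then bounds the original tangencies by $O_k(n)$.

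\textbf{Construction.} Let $X$ be a vertical strip whose interior contains all the curves of $\C$. Extend each curve $c$ within $X$ by horizontal segments to the left boundary $x=x_L$ and to the right boundary $x=x_R$ of $X$. The extension is placed at the height of the endpoint $c^-$ (resp. $c^+$). Then, outside $X$, continue $c$ by two rays, which are chosen as follows.
\begin{itemize}
\item To the left of $X$, the ray starting at $(x_L,c^-_y)$ has a distinct large slope $s_c$.
\item To the right of $X$, the ray has slope $-s_c$, or more generally slopes ordered in the reverse way.
\end{itemize}
The slopes are chosen so that every two of these left rays cross at most once, and likewise every two right rays cross at most once. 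Perturbing the heights and slopes generically, every crossing created by the extensions is transversal. Hence there are no new tangencies, and the old tangencies survive, because the original curves are untouched.

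\textbf{Counting intersections.} The main point is to bound the number of intersections between two extended curves. I expect two difficulties.

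The first is the horizontal stubs inside $X$. A horizontal extension of $c$ may cross the original part of another curve $c'$, and possibly many times. To avoid this, I would instead shrink the curves so that they all lie within the region that will be swept. A cleaner alternative is to avoid horizontal stubs altogether: apply an affine/topological transformation so that all endpoints lie on the boundary of the convex hull region. That may not be possible, however.

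Safer alternative: use that the curves pairwise intersect. Then their $x$-projections pairwise intersect, so by Helly's theorem in dimension one there is a common vertical line $v$. Extending leftwards from $c^-$, go \emph{straight vertically close to} ... but that breaks $x$-monotonicity. So instead I would extend $c$ leftward from $c^-$ by a steep, nearly vertical segment going far up (or down) to a common huge height band, and then horizontally. A steep segment crosses each other curve at most once locally because near $c^-$ the others are $x$-monotone graphs: the segment sweeps an $x$-interval so small that any other curve is nearly constant there, unless that curve passes through $c^-$, which is excluded. Thus each extension crosses each other original curve at most once.

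With this choice, two curves $c,c'$ acquire at most one crossing from $c$'s left extension with $c'$, one from $c'$'s left extension with $c$, and one between the two left extensions themselves, and similarly on the right. That gives too many.

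\textbf{Budget and main obstacle.} The plan is to make the extensions mutually non-crossing on each side by choosing the heights in the far bands consistently with a total order. Then only the extension-versus-original crossings remain. To fit the budget of $2$ extra crossings, I need at most one extra crossing on each side per pair. So the heights (up or down) must be chosen so that the left extensions of $c$ and $c'$ cannot both cross the other curve. Sending all left extensions up and ordering the bands by the $y$-order at $v$ (or by $x$-coordinate of the left endpoints) should ensure this.

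This pairwise bookkeeping, showing that at most one new crossing per pair arises on each side, is the step I expect to be the hardest. The final count is then at most $(k-2)+2=k$ intersections per pair, and the pairwise-intersecting property is inherited from $\C$, so the conjecture applies to $\C'$.
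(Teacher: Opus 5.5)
\textbf{Verdict: there is a genuine gap.} Your skeleton is the same as the paper's: extend each curve to a bi-infinite $x$-monotone curve by steep extensions, pay at most two extra crossings per pair, and apply the conjecture. The problem is the step you treated as free, namely that ``the old tangencies survive, because the original curves are untouched.'' In this paper two curves are tangent only if they have \emph{precisely one} common point. So if an original tangent pair gains any new common point from the extensions, it is no longer a tangency of the extended family, and the conjecture does not count it.

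Your concrete choice of sending all left extensions up destroys tangencies systematically. Suppose $c$ touches $c'$ from above and the left endpoint $p$ of $c'$ lies to the right of the left endpoint of $c$. Then $c$ passes strictly above $p$, so the steep upward extension of $c'$ starting at $p$ must cross $c$. The same happens on the right side, and with the roles reversed for ``all down''. Hence every uniform choice of direction loses a whole class of tangencies, and nothing prevents these from being all of them. So no bound on the original tangencies follows.

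\textbf{The missing idea} is that the direction of a curve's extension must depend on its role in the tangencies. A curve touching from above should be extended by steep rays going up, and a curve touching from below by steep rays going down. In a tangent pair the upper curve lies weakly above the lower one on their common $x$-range. Therefore the upper curve's upward rays never meet its partner, and the lower curve's downward rays never meet its partner, so tangent pairs gain no new common point.

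Since a curve may be the upper curve in one tangency and the lower curve in another, you must first split the tangencies into classes in which the roles are consistent. The paper uses four types, according to whether the upper curve starts before or after the lower one and ends before or after it. Within one type, pairwise intersection together with the assumption that no three curves share a point shows that no curve plays both roles: otherwise the outer two curves would be separated by the middle one and hence disjoint. Alternatively, a random coloring keeps a quarter of the tangencies.

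\textbf{The crossing budget is not the hard part.} The step you flagged as hardest is essentially automatic. On the left side, only the curve whose left endpoint is further to the right can have its steep ray meet the other original curve; the other curve's extension lies entirely to the left of its domain. Sufficiently steep rays, with all upward (respectively downward) rays on a side parallel, do not cross each other. So each pair gains at most one crossing per side, for a total of at most $(k-2)+2=k$.
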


\begin{proof}
Let $\L$ be a set of curves as in the proposition.
We assume without loss of generality that no two endpoints of two curves share the same $x$-coordinate.
There are four types of tangencies: the curve touching from above starts before/after the curve touching from below and ends before/after it.
We consider each type separately and observe that the set of curves touching from above and the set of curves touching from below are disjoint sets.
Call the curves in the first set \emph{red} and the curves in the other set \emph{blue}.
Next, we extend each red curve into a bi-infinite curve by shooting a very steep ray leftwards (resp., rightwards) and upwards at its left (resp., right) endpoint.
Similarly, we extend each blue curve into a bi-infinite curve by shooting a very steep ray leftwards (resp., rightwards) and downwards at its left (resp., right) endpoint.
The rays going leftward and upwards and the rays going rightwards and downwards have opposite slopes.
Similarly, the rays going leftward and downwards and the rays going rightwards and upwards have opposite slopes.
Note that this introduces at most two new intersection points between two curves while for touching curves (of the considered type) no new intersection point is introduced.
\end{proof}

According to the following observation for an even $k$ we may assume that every non-tangent pair of curve intersects at exactly $k$ points.

\begin{proposition}
Let $\L$ be a set of $n$ bi-infinite $x$-monotone curves such that every pair of curves in $\L$ intersect at at least one and at most $k$ points for some even $k$.
Then there is a set $\L'$ of $n$ bi-infinite $x$-monotone curves such that every pair of curves in $\L'$ is either touching at a single point or crossing at exactly $k$ points and the number of touching pairs in $\L$ and $\L'$ is the same.
\end{proposition}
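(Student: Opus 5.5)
We construct $\L'$ by local surgery on $\L$. Tangent pairs are left alone. For every crossing pair we add extra crossings in small neighbourhoods until the pair has exactly $k$ crossing points. The modifications must keep all curves bi-infinite and $x$-monotone, must not affect any other pair, and must not create or destroy tangencies.

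First we put the family in general position. Perturb slightly so that all intersection points have distinct $x$-coordinates and no three curves meet at a point. Each tangency stays a tangency, and each crossing pair keeps its number of crossings. Every non-tangent pair of $\L$ then crosses at some number $j$ of points, with $1 \le j \le k$. Call this the pair's \emph{deficiency} $k-j$. For bi-infinite $x$-monotone curves the parity of $j$ is fixed: $j$ is even if the two curves have the same vertical order at $x\to-\infty$ and at $x\to+\infty$, and odd otherwise. Since $k$ is even, a pair with $j$ odd needs an odd number of extra crossings. So the parity issue has to be handled first, and this is the main obstacle.

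For the parity fix, I would try changing the order of the curves at $-\infty$. Far to the left of all intersection points the curves appear in some vertical order $\ell_1,\dots,\ell_n$. There we may insert a region in which we realize any permutation, for instance by a wiring diagram in which each pair crosses at most once. Pairs that cross once in the new region gain one crossing, and no tangency is affected. We want a permutation that reverses exactly the pairs with odd $j$. Such a permutation must exist, because it is read off from the order at $+\infty$: the pairs with odd $j$ are exactly the pairs whose order at $-\infty$ differs from their order at $+\infty$. So we take the wiring diagram that turns the order at $-\infty$ into the order at $+\infty$. Afterwards every crossing pair has even $j$ with $2 \le j \le k+1$, hence $j \le k$ because $k$ is even. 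Tangent pairs have the same order at both ends, so they are not crossed in the wiring region and stay tangent with a single common point. A pair that was disjoint is excluded by the hypothesis of at least one intersection. A point to check: a pair with $j=k-1$ odd goes to $k$, fine; a pair with $j=k$ is even and untouched.

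Then we add crossings two at a time. For a crossing pair $(c,c')$ with even deficiency $2m$, pick one of its crossing points $p$. Near $p$, replace the crossing by $2m+1$ crossings: inside a thin vertical slab around $p$ that contains no other intersection point, let $c$ weave across $c'$ with small oscillations. Since the slab can be taken narrow enough that only $c$ and $c'$ are near $p$, this modification does not change any other pair. Doing this for every pair, each crossing pair ends with exactly $k$ crossings and the tangent pairs are unchanged. This gives $\L'$ with the same number of touching pairs.
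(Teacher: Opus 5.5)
Your argument is correct and is essentially the paper's: first add exactly one crossing to each pair whose vertical order differs at $-\infty$ and $+\infty$, so every non-tangent pair crosses an even number of times (at most $k$), then add the remaining even number of crossings by local weaving near an existing crossing point (in a small disk meeting only the two curves, which is what your ``slab'' should be). The only difference is that you insert one reduced wiring diagram at the left end, whereas the paper repeatedly swaps a closest reversed pair (necessarily adjacent at $+\infty$) at the right end, which is just a bubble-sort realization of the same diagram.
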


\begin{proof}
We may assume without loss of generality that all the curves are parallel to the $x$-axis at $-\infty$ and $+\infty$ and denote them by $1,2,\ldots,n$ according to their order from top to bottom at $-\infty$.
Let $f:\{1,2,\ldots,n\}\rightarrow \{1,2,\ldots,n\}$ denote their order at $+\infty$, that is, $f(i)$ is the location of $i$ in the order of the curves from top to bottom at $+\infty$.

If $f$ is the identity function, then every pair of curves either touch or cross at an even number of points.
In such a case, every two curves that cross at $k'<k$ points, can be redrawn very close to one of their crossing points such that they cross at $k-k'$ new crossing points.
This way we obtain exactly $k$ crossing points between every pair of non-touching curves.

Otherwise, if $f$ is not the identity function, then we can find two curves that cross an odd number of times, redraw them such that they cross an even number of times and repeat this process until $f$ becomes the identity function.
Indeed, let $i$ and $j$ be two curves such that $i<j$ and $f(i) > f(j)$.
If there are several such pairs we choose a `closest' one, that is, one which minimizes the difference $f(i)-f(j)$.
Note that there is no curve $l$ such that $f(i) > f(l) > f(j)$, for otherwise we must have both $l<i$ and $j<l$ (or else $(i,l)$ or $(l,j)$ would be a `reversed' pair closer than $(i,j)$) which is impossible since $i<j$.
Therefore, $f(i)=f(j)+1$ and it follows that we can redraw $i$ and $j$ such that they cross at an additional point. 
\end{proof}


\section{An Erd\H{o}s-Simonovits-type theorem}
\label{sec:sparse-neighborhoods}

In this section we prove Theorems~\ref{thm:spare-neighborhoods-ub} and~\ref{thm:spare-neighborhoods-lb} and discuss some of their applications and connections to other graph theoretic results.
For a graph $G=(V,E)$ we denote by $\overline{d}(G)=2|E|/|V|$ the average degree in $G$. 
The subgraph induced by a subset of vertices $U \subseteq V$ is denoted by $G[U]$.
For a vertex $v \in V$ we denote by $N_G(v)$ (or simply $N(v)$ when the context is clear) the neighbors of $v$ in $G$.
The (\emph{open}) \emph{bineighborhood} of two vertices $v$ and $u$ is equal to $(N_G(v) \cup N_G(u)) \setminus \{u,v\}$.
Assume henceforth that $G$ is bipartite and 
recall that for a nonnegative and nondecreasing function $f:\mathbb{R} \rightarrow \mathbb{R}$ we say that two vertices $u$ and $v$ have \emph{$f$-sparse sub-bineighborhoods}
if for every two disjoint subsets $U \subseteq N_G(u) \setminus \{v\}$ and $V \subseteq N_G(v) \setminus \{u\}$ it holds that $|E(G(U,V))| \le f(|U \cup V|)$, where $G(U,V)$ is the bipartite subgraph $(U \cup V, \{(u,v) \in E(G) \mid u\in U \textrm{ and } v\in V\})$.
Theorem~\ref{thm:spare-neighborhoods-ub} will follow from the next result.

\begin{theorem}
	\label{thm:f-sparse-neighborhoods}
	Let $f:\mathbb{R} \rightarrow \mathbb{R}$ be a nonnegative and nondecreasing function and let $G=(A \cup B,E)$ be an $n$-vertex bipartite graph with $\overline{d}(G) \ge 16$. \\
	(i)~If every pair of vertices in $G$ has $f$-sparse sub-bineighborhoods, then $(\overline{d}(G))^3 \le 2^{11}nf(2\overline{d}(G))$.\\
	(ii)~If every pair of adjacent vertices has $f$-sparse sub-bineighborhoods and $|E| \ge 25n^{3/2}$, then $(\overline{d}(G))^3 \le 2^{20}nf(2\overline{d}(G))$.
\end{theorem}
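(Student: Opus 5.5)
We follow the Erd\H{o}s--Simonovits strategy: count paths of length three and use the sparse sub-bineighborhood hypothesis to bound them from above. Write $d=\overline{d}(G)$. First, by the standard deletion argument (repeatedly removing vertices of degree less than $d/4$), we pass to a nonempty subgraph $G'$ with minimum degree at least $d/4$ and average degree at least $d$; the hypothesis is inherited by subgraphs. We also want the maximum degree to be at most $2d$, since the hypothesis is applied through $f(2\overline d)$. To get this, we either restrict attention to vertices of degree at most $2d$ (at most $n/2$ vertices exceed degree $2d$, and we keep a constant fraction of the edges after a dyadic/averaging step), or we truncate neighborhoods: for each vertex we fix a sub-neighborhood of size $\Theta(d)$, which is allowed since the hypothesis concerns arbitrary subsets $U\subseteq N(u)$, $V\subseteq N(v)$. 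The truncation is the cleaner option, and we adopt it.

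\emph{Lower bound on $P_3$'s.} Count walks $x\!-\!u\!-\!v\!-\!y$ with $uv\in E$, $x\in N(u)\setminus\{v\}$ and $y\in N(v)\setminus\{u\}$, where $xy$ is also an edge; these are $4$-cycles through the edge $uv$. Counting instead pairs of adjacent vertices $x,y$ together with a $u\in N(x)$ and a $v\in N(y)$ that are adjacent is harder. We therefore use the convexity count directly. The number of paths $x\!-\!u\!-\!v\!-\!y$ is $\sum_{uv\in E}(\deg u-1)(\deg v-1)\gtrsim |E|\,d^2$ by the minimum-degree bound. Each such path has its end-edge pair $(x,y)$ either adjacent or not.

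\emph{Upper bound via $f$.} For part (i), fix any two vertices $x,y$, say $x\in A$ and $y\in B$, and let $U=N(x)$ and $V=N(y)$, each truncated to size at most $2d$. These sets are disjoint since $U\subseteq B$ and $V\subseteq A$. The number of paths $x\!-\!u\!-\!v\!-\!y$ equals $|E(G(U,V))|\le f(|U\cup V|)$, which is bounded by roughly $f(2d)$ after we adjust the truncation sizes so that $|U\cup V|\le 2d$. Summing over the at most $n^2/4$ pairs gives $|E|d^2\lesssim n^2 f(2d)$. Substituting $|E|=nd/2$ yields $d^3\le C\,nf(2d)$, with the constant $2^{11}$ coming out of the bookkeeping from truncation and minimum degree. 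This is where $\overline d\ge16$ enters, since it ensures that $\deg-1\ge\deg/2$ and that the truncations are nontrivial.

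\emph{Part (ii)}, where the hypothesis holds only for adjacent pairs, is the main obstacle: we must show that a positive fraction of the $P_3$'s have adjacent endpoints, that is, close into $C_4$'s. We will use $|E|\ge 25n^{3/2}$ for this. The number of $C_4$'s is at least about $|E|^4/n^4$ (the standard Cauchy--Schwarz count, which is valid once $d\gg\sqrt n$). This gives the per-edge lower bound $\#C_4\cdot 4/|E|\gtrsim d^3/n$ on the average number of $4$-cycles through an edge. On the other hand, for an edge $xy$ the $4$-cycles through it correspond to edges of $G(N(x)\setminus y,\,N(y)\setminus x)$, and after truncation there are at most $f(2d)$ of them. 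Hence $d^3/n\lesssim f(2d)$, giving $d^3\le 2^{20} n f(2d)$. The delicate point is that truncation may destroy $4$-cycles. We handle this by randomly sampling sub-neighborhoods of size $d$ for high-degree vertices and computing the expected number of surviving cycles, or alternatively by first passing to a subgraph with all degrees within a constant factor of $d$ (losing constants). This step is what forces the large constant $2^{20}$.
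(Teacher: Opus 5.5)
\textbf{Gap: the maximum-degree reduction is never carried out.} Your counting skeleton is the same as the paper's: paths $x\,u\,v\,y$ (resp.\ $4$-cycles through an edge), bounded above by $f$ and below via the minimum degree (resp.\ Cauchy--Schwarz). But the step that makes $f(2\overline d)$ appear, controlling the maximum degree, is never done, and none of your options works as stated.
\begin{itemize}
\item Restricting to vertices of degree at most $2d$, or passing to a subgraph with all degrees $\Theta(d)$, can destroy almost all edges. In $K_{s,N}$ with $N\gg s$ we have $d\approx 2s$ and nothing is removed by the minimum-degree step. Yet every edge meets a vertex of degree $N\gg d$, so any subgraph with maximum degree $O(d)$ keeps only $O(s^2)$ of the $sN$ edges.
\item Truncating neighborhoods breaks your identity ``the number of paths $x\,u\,v\,y$ equals $|E(G(U,V))|$''. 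With $U=T(x)$ and $V=T(y)$ you only count paths with $u\in T(x)$ and $v\in T(y)$. The sum over pairs is therefore $\sum_{uv\in E}\iota(u)\iota(v)$ with $\iota(u)=|\{x: u\in T(x)\}|$, not $\sum_{uv\in E}(\deg u-1)(\deg v-1)$. Since $u\in T(x)$ does not imply $x\in T(u)$, $\iota(u)$ can be much smaller than $\deg u$, and the minimum-degree bound says nothing about this sum.
\item Without truncation, the per-pair bound is only $f(\deg x+\deg y)$, which is not comparable to $f(2d)$ for a merely nondecreasing $f$.
\item Part (ii) has the same problem. Cauchy--Schwarz lower-bounds the $4$-cycles of $G'$, whereas $f(2d)$ only bounds cycles inside truncated neighborhoods. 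Your random-sampling remedy is only named, not carried out: the truncation relation is asymmetric, so there is no graph on which to rerun the Cauchy--Schwarz count.
\end{itemize}

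\textbf{The missing idea: split vertices rather than truncate neighborhoods.} Replace each vertex $v$ with $\deg v>d$ by $\lfloor \deg v/d\rfloor$ new vertices of degree $d$ and possibly one of smaller degree, distributing $N(v)$ among them. Then:
\begin{itemize}
\item every edge survives, there are at most $2n$ vertices, and the maximum degree is at most $d$;
\item no vertex is adjacent to two copies of the same original vertex, so every sub-neighborhood in the new graph maps injectively onto a sub-neighborhood in $G$, and adjacent pairs map to adjacent pairs;
\item hence $f$-sparseness is inherited, using that $f$ is nondecreasing.
\end{itemize}
Then delete vertices of degree below $d/8$; this keeps at least $m/2$ edges. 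On the resulting genuine graph, with all degrees in $[d/8,d]$ and at most $2n$ vertices, your counts go through:
\begin{itemize}
\item In (i), every edge is counted at least $(d/16)^2$ times among at most $2n^2$ pairs, each contributing at most $f(2d)$. This gives $d^3\le 2^{11}nf(2d)$.
\item In (ii), $\#K_{2,2}\le \frac{m}{4}f(2d)$. Counting $K_{2,1}$'s centered in the larger side (at least $n/4$ vertices) gives $\#K_{2,1}\ge nd^2/2^{11}\ge n^2$, using $d\ge 50\sqrt n$. Cauchy--Schwarz then yields $\#K_{2,2}\ge d^4/2^{23}$, hence $d^3\le 2^{20}nf(2d)$.
\end{itemize}
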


\begin{proof}
	Set $m=|E|$ and let $d=\overline{d}(G)=2m/n$ be the average degree of $G$.
	We first replace $G$ with a graph $G'$ in which the degree of every vertex is roughly the average degree (while maintaining the $f$-sparseness property):
	Go over the vertices of $G$ in an arbitrary order and replace each vertex $v$ such that $\deg(v)>d$ by $\lfloor \deg(v)/d \rfloor$ vertices of degree $d$ and possibly one vertex of a smaller degree.
	The neighbors of $v$ are connected to the vertices that replace $v$ in an arbitrary manner which satisfies this property.
	Observe that the resulting graph still has $f$-sparse sub-bineighborhoods for every pair of (adjacent) vertices, since each sub-neighborhood of a vertex in the new graph cannot contain two vertices that correspond to the same original vertex.
	Thus, every sub-neighborhood in the new graph corresponds to a sub-neighborhood in $G$.
	Furthermore, since $d=2m/n$, the number of vertices in the new graph is at most $\sum_{v \in A\cup B} (\lfloor \deg_{G}(v)/d \rfloor + 1) \le 2n$. 
	Because the number of edges remains the same, the average degree of this graph is at least $d/2$.
	
	Next, we repeatedly remove vertices of degree smaller than $d/8$ until no such vertices remain.
	We claim that at least $m/2$ edges remain after this step.
	Indeed, suppose for contradiction that less than $m/2$ edges remain and denote by $x$ the number of vertices that were removed.
	Then at most $xd/8$ edges were removed, therefore $xd/8>m/2=2nd/8$ which is impossible since $x \le 2n$.	
	Denote by $G'=(A' \cup B', E')$ the resulting graph and note that $|A' \cup B'| \le 2n$, $|E'| \ge m/2$ and the degree of every vertex in $G'$ is at least $d/8$ and at most $d$.
	Observe also that  $|A' \cup B'| \ge 2|E'|/d \ge m/d = n/2$.
	Furthermore, $G'$ still has the $f$-sparse sub-bineighborhoods property, since this property is maintained when deleting a vertex.
	
	Considering (i), for each pair of vertices $a \in A'$ and $b \in B'$ the number of edges in the subgraph of $G'$ induced by the union of their open bineighborhoods is at most $f(2d)$.
	Therefore we have $\sum_{a \in A', b \in B'}  |E(G'[(N_{G'}(a)\setminus \{b\}) \cup (N_{G'}(b) \setminus \{a\}])| \le \binom{2n}{2} f(2d) \le 2n^2 f(2d)$.
	Note that every edge $(a,b) \in E'$ is counted in this sum for every pair $a' \in A'$ and $b' \in B'$ such that $a \in N_{G'}(b') \setminus \{a'\}$ and $b \in N_{G'}(a') \setminus \{b'\}$. Since $a$ (resp., $b$) has at least $\frac{d}{8}-1 \ge \frac{d}{16}$ neighbors different from $b$ (resp., $a$), it follows that $(a,b)$ is counted at least $d^2/256$ times in the above sum (here we used the assumption $d \ge 16$).
	Therefore, $nd/2=m=|E| \le 2|E'| \le 2^{10} n^2f(2d)/d^2$ and thus $d^3 \le 2^{11}nf(2d)$.
	
	Considering (ii), we have $d\ge 50n^{1/2}$ since $m\ge 25n^{3/2}$.  Assume w.l.o.g.\ that $|B'|\ge |A'|$ 
	and therefore $|B'| \ge n/4$ and $|A'| \le n$.
	Let $\#K_{2,1}$ be the number of $K_{2,1}$'s in $G'$ with two vertices in $A'$ and 
	let $\#K_{2,2}$ be the number of $K_{2,2}$'s in $G'$.
	Then,
	$$\#K_{2,2} =  \frac{1}{4}\sum_{a \in A', b \in B', (a,b) \in E'}  |E(G'[(N_{G'}(a)\setminus \{b\})\cup (N_{G'}(b)\setminus\{a\}])| \le \frac{m}{4}f(2d) = \frac{dn}{8} f(2d).$$ 
	
	On the other hand, denoting $s_{a,a'}=|N_{G'}(a)\cap N_{G'}(a')|$ for two distinct vertices $a, a'\in A'$ we have $\#K_{2,1} = \sum_{a\ne a' \in A'} s_{a,a'}$ and:
	$$\#K_{2,2}=\sum_{a\ne a'\in A'}\binom{s_{a,a'}}{2}=\frac{1}{2}\sum_{a\ne a'\in A'}(s_{a,a'}^2-s_{a,a'}) \ge \frac{(\#K_{2,1})^2}{2\binom{|A'|}{2}}-\frac{\#K_{2,1}}{2}\ge \frac{(\#K_{2,1})^2}{2\binom{n}{2}}-\frac{\#K_{2,1}}{2},$$
	where the second to last inequality follows from Cauchy's inequality. 
	For $\#K_{2,1}$ we have the following lower bound:	
	
	$$\#K_{2,1}=\sum_{a\ne a'\in A'}s_{a,a'} = \sum_{b \in B'} \binom{\deg(b)}{2} \ge \frac{n}{4}\binom{d/8}{2}\ge nd^2/2^{11}\ge n^2,$$
	where we used double-counting by the vertex from $B'$ for the $K_{2,1}$'s and the inequalities $|B'|\ge n/4$ and $d\ge 50n^{1/2}$. Combining the inequalities above we get:
	$$\frac{dnf(2d)}{8} \ge \#K_{2,2} \ge \frac{(\#K_{2,1})^2}{n^2}-\frac{\#K_{2,1}}{2} \ge \frac{(\#K_{2,1})^2}{n^2}-\frac{(\#K_{2,1})^2}{2n^2} = \frac{(\#K_{2,1})^2}{2n^2} \ge \frac{n^2d^4}{2^{22}\cdot 2n^2}=\frac{d^4}{2^{23}}.$$	
	This implies that $d^3\le 2^{20}nf(2d)$, as claimed.
\end{proof}

Using Theorem~\ref{thm:f-sparse-neighborhoods} we can now prove Theorem~\ref{thm:spare-neighborhoods-ub} which we restate for convenience.
\KSTub*

\begin{proof}
	Recall that every graph contains a bipartite subgraph whose size is at least half of the size of the graph.	
Let $G'=(A'\cup B',E')$ be such a bipartite subgraph of $G$, where $|E'| \ge |E(G)|/2$, and let $n=|V(G)|$ and $d'=\overline{d}(G')$.
If every pair of vertices in $G$ has $f$-sparse sub-bineighborhoods, then the same holds for $G'$ and it follows from Theorem~\ref{thm:f-sparse-neighborhoods} that $d'^3 \le 2^{11} nf(2d') = O(nd'^{2-\alpha})$.
Therefore $d' \le O(n^{1/(\alpha+1)})$ and the upper bound follows.

If $0 < \alpha \le 1$ and every pair of adjacent vertices has $f$-sparse sub-bineighborhoods, then we may assume that $|E'|\ge 25n^{3/2}$ for otherwise the theorem trivially holds.
Therefore, by Theorem~\ref{thm:f-sparse-neighborhoods} we have $d'^3 \le 2^{20} nf(2d') = O(nd'^{2-\alpha})$ and $d' \le O(n^{1/(\alpha+1)})$ as before.
\end{proof}

Next we prove Theorem~\ref{thm:spare-neighborhoods-lb} by which the upper bound of Theorem~\ref{thm:spare-neighborhoods-ub} is asymptotically tight.
For convenience we restate the theorem.
\KSTlb*

\begin{proof}
	The case $\alpha = 0$ is trivial.
	For $\alpha = 1$ we can take a $K_{2,2}$-free graph with $\Theta(n^{3/2})$ edges~\cite{Kovari}. Indeed, in such a graph any bipartite subgraph induced by two sub-bineighborhoods contains no vertex of degree at least two.
	This also shows why for $\alpha > 1$ we require that every pair of vertices satisfies the $f$-sparse sub-bineighborhoods property and not just adjacent pairs. Indeed, for adjacent pairs the sub-bineighborhoods contain no edges at all in a $K_{2,2}$-free graph, so the assumption holds with arbitrary $\alpha<2$ for such neighborhoods, yet the number of edges is $\Theta(n^{3/2})$.
	
	Next we show the lower bound for $0 < \alpha < 1$.
	For convenience fix $c = 2-\alpha> 1$.
	Thus, we wish to construct a graph with $\Omega\left(n^{\frac{4-c}{3-c}}\right)$ edges in which every pair of vertices has $f$-sparse sub-bineighborhoods for $f(x)=O(x^c)$.
	Specifically, we construct and analyze a standard random bipartite graph $G=(A\cup B,E)$, where $|A|=|B|=n$ and every pair of vertices $a \in A$ and $b \in B$ are connected by an edge independently with probability $p=n^{-\frac{2-c}{3-c}}$.
	
	Let $E_{A',B'} := E(G[A'\cup B'])$ for every $A' \subseteq A$ and $B' \subseteq B$.
	%
	We say that a $4$-tuple $a \in A, b \in B, A' \subseteq A$ and $B' \subseteq B$ is `bad' if
	$B' \subseteq N(a) \setminus \{b\}$, $A' \subseteq N(b)\setminus \{a\}$ and $|E_{A',B'}| > q(|A'|+|B'|)^{c}$, where $q>0$ is a large enough constant that will be determined later.
	Such a fixed $4$-tuple is bad with probability $p^{|A'|}p^{|B'|} \cdot \Pr(|E_{A',B'}| > q(|A'|+|B'|)^{c}$.
	Let $X$ be a random variable that is equal to the number of bad $4$-tuples.
	Then,  	
	$$\E{X} = n^2 \sum_{i=1}^{n-1} \sum_{j=1}^{n-1} {{n-1}\choose{i}} p^i {{n-1}\choose{j}} p^j \cdot \underset{Z \sim \textrm{Bin}(ij,p)}{\Pr}\left(Z > q(i+j)^{c}\right),$$ where $Z$ is a binomially distributed random variable counting the number of `successes' among $i\cdot j$ independent experiments each of success probability $p$.
	It is enough to show that $\E{X} \rightarrow 0$ as $n \rightarrow \infty$, since by Markov's inequality $\Pr(X \ge 1) \le \E{X}$, which means that $\E{X} \rightarrow 0$ implies that there are no bad $4$-tuples with high probability.
	Furthermore, it follows from Chernoff bound that with high probability $|E|=\Theta(\E{E}) = \Theta(pn^2) = \Theta(n^{(4-c)/(3-c)})$, therefore, with high probability there are $\Theta(n^{(4-c)/(3-c)})$ edges and no bad $4$-tuples.
	
	Let
	$$ T_{i,j} := \binom{n}{i} p^i \binom{n}{j} p^j \cdot \underset{Z_{i,j} \sim \textrm{Bin}(ij,p)}{\Pr}(Z_{i,j} > q(i+j)^{c}) .$$
	
	Since $\E{X} \le n^4 \cdot \underset{i,j}{\max} \{T_{i,j}\}$, it is enough to show that the maximum value of $T_{i,j}$ tends to zero faster than $n^{-4}$ as $n \rightarrow \infty$.
	Note that $ \underset{i,j}{\max} \{T_{i,j}\}$ is obtained when $i$ and $j$ are equal or almost equal in case $i+j$ is odd.
	Indeed, suppose that the sum $s=i+j$ is fixed.
	Then clearly $\underset{Z_{i,j} \sim \textrm{Bin}(ij,p)}{\Pr}(Z_{i,j} > q(i+j)^{c})$ is maximized when $ij$ is maximized and this happens for $i=\lfloor s/2 \rfloor$ and $j=\lceil s/2 \rceil$ (or the other way around).
	Likewise, it is not hard to show that the product of the binomial coefficients  $\binom{n}{i}\binom{n}{j}=\binom{n}{i}\binom{n}{s-i}$ is also maximized for $i=\lfloor s/2 \rfloor$ and $j=\lceil s/2 \rceil$, see~\cite{stack}. 
	Thus, we will consider $\max_i \{T_{i,i}, T_{i,i+1}\}$ 
	and split the analysis into three cases.
	
	Let $A_{i,j} := \binom{n}{i}p^i \binom{n}{j}p^j$ be the first term of $T_{i,j}$ and let $B_{i,j} := \underset{Z_{i,j} \sim \textrm{Bin}(ij,p)}{\Pr}(Z_{i,j} > q(i+j)^{c})$ be its second term.
	Note that $\frac{A_{i,i+1}}{A_{i,i}} = p\frac{n-i}{i+1} \le n$.	
	Using the inequality $\binom{n}{i} \le (ne/i)^i$ we get that
	\begin{equation}\label{eq:binom-coef}
		A_{i,i} = \left(\binom{n}{i} p^{i}\right)^2 \le \left(\frac{e\cdot n^{1/(3-c)}}{i}\right)^{2i} \textrm{ and thus } A_{i,i+1} \le n\left(\frac{e\cdot n^{1/(3-c)}}{i}\right)^{2i}.
	\end{equation}
	
	\paragraph{Case 1: $i > 10e\cdot n^{1/(3-c)}$.}
	It follows from Eq.~\eqref{eq:binom-coef} that $T_{i,i}, T_{i,i+1} < n 0.1^{2i} \le n 0.1^{20en^{1/(3-c)}}$. As $n \rightarrow \infty$ this expression decays to zero much faster than $n^{-4}$ and that concludes this case.
	
	\paragraph{Case 2: $i \le 10e\cdot n^{\frac{1}{3-c}}$.}
	Let $\mu_{i,j} = \E{Z_{i,j}} = ijp$ and let $t_{i,j} = q(i+j)^c$.
	Note that $2qi^c \le t_{i,i}, t_{i,i+1} \le 9qi^c$, $\mu_{i,i}=i^2 p$ and $i^2 p \le \mu_{i,i+1} \le 2i^2 p$. 
	Recall that by Chernoff bound for any $\delta \ge 0$ we have 
	$$\Pr(Z_{i,j}>(1+\delta)\mu_{i,j}) \le \left(\frac{e^{\delta}}{(1+\delta)^{1+\delta}}\right)^{\mu_{i,j}} = 
	e^{\mu_{i,j}(\delta-(1+\delta)\ln(1+\delta))}.$$
	
 Thus,
	$$\ln B_{i,j} \le \mu_{i,j}(\delta - (1+\delta)\ln(1+\delta)).$$

	Note that if we choose $q \ge (10e)^{2-c}$ , then  $\frac{t_{i,i}}{\mu_{i,i}}, \frac{t_{i,i+1}}{\mu_{i,i+1}} \ge \frac{q}{p}i^{c-2} \ge 1$.
	Substituting $\delta=\frac{t_{i,j}}{\mu_{i,j}}-1 \ge 0$ for $j=i,i+1$ we get:
	\begin{align*}
		\ln B_{i,j} & \le t_{i,j} - \mu_{i,j} - t_{i,j}(\ln t_{i,j} - \ln \mu_{i,j}) =  t_{i,j}(1- \ln t_{i,j} + \ln \mu_{i,j}) -\mu_{i,j} \\
		& \le 9qi^c \left(1-\ln(2q) - c\ln i + \ln 2 + 2\ln i + \ln p\right) \\
		& \le 9qi^c \left(2-\ln(2q)+(2-c)\ln i - \frac{2-c}{3-c}\ln n\right). 
	\end{align*}
	
	Next we split the current case into two subcases and show that $\ln T_{i,i}$ and $\ln T_{i,i+1}$
	tend to $-\infty$ fast enough in both of them. 
	
	\paragraph{Subcase 2a: $n^{\frac{1}{2(3-c)}} \le i \le 10e\cdot n^{\frac{1}{3-c}}$.}
	Considering the natural logarithm of $T_{i,j}$ for $j=i,i+1$ we have:
	\begin{align*} 
		\ln T_{i,j} & = \ln A_{i,j} + \ln B_{i,j} \\
		& \le \ln n + 2i \left(1+\frac{\ln n}{3-c} - \ln i\right) + 9qi^c \left(2-\ln(2q)+(2-c)\ln\left(10e\cdot n^\frac{1}{3-c}\right)  - \frac{2-c}{3-c}\ln n\right) \\
		& = \ln n + 2i \left(1+\frac{\ln n}{3-c} - \ln i\right) + 9qi^c \left(2-\ln(2q)
		+ (2-c)\ln(10e) \right)\\
		& \le 3i \ln n + 9qi^c(9-\ln(2q)).
	\end{align*}
	
	Therefore, if we take $q>0.5e^9$ (which we do), then since $i\ln n \ll i^{c}$ when $c>1$ and $i \ge n^{\frac{1}{2(3-c)}}$, we conclude that $T_{i,i}$ and $T_{i,i+1}$ decay to zero super-polynomially in this case.
		
	\paragraph{Subcase 2b: $i < n^{\frac{1}{2(3-c)}}$.} Recall that for $j=i,i+1$ we have:
	\begin{align*} 
		\ln T_{i,j} & \le \ln n + 2i \left(1+\frac{\ln n}{3-c} - \ln i\right) + 9qi^c \left(2-\ln(2q)+(2-c)\ln i - \frac{2-c}{3-c}\ln n\right) \\ 
		& \le \ln n + 2i \left(1+\frac{\ln n}{3-c} - \ln i\right) + 9qi^c \left(2-\ln(2q)+\frac{2-c}{2(3-c)}\ln n - \frac{2-c}{3-c}\ln n\right) \\ 
		& = \ln n + 2i \left(1+\frac{\ln n}{3-c} - \ln i\right) + 9qi^c\left(2- \ln (2q) - \frac{\ln n}{2(3-c)} \right) \\ 
		& = \left(1+ \frac{2i}{3-c} - \frac{9qi^c}{2(3-c)}\right)\ln n 
		-2i(\ln i - 1) - 9qi^c(\ln(2q)-2) \\ 
		& < \frac{2(3-c)+4i-9qi^c}{2(3-c)}  \ln n < \frac{-qi^c}{2(3-c)}\ln n < -5\ln n,
	\end{align*}
	where the last two inequalities hold for large enough $q$.
	Therefore, $n^4 T_{i,i}$ and $n^4 T_{i,i+1}$ tend to zero also in this case.
	Considering the conditions on $q$ along the proof, it is enough to take, say, $q=5000$.
\end{proof}

\paragraph{Remarks:}
\begin{itemize}
	\item
	As mentioned in the Introduction, Theorem~\ref{thm:spare-neighborhoods-ub} implies the theorem of Erd\H{o}s and Simonovits (Theorem~\ref{thm:erdos-simonovits}) which in turn implies the K\H{o}v\'ari-S\'os-Tur\'an Theorem.
	Considering the differences between Theorem~\ref{thm:erdos-simonovits} and Theorem~\ref{thm:spare-neighborhoods-ub}, firstly, the former can be used only for sub-bineighborhoods which induce a sparse subgraph due to avoiding a certain subgraph, whereas in the latter we just assume sparseness. Secondly, Theorem~\ref{thm:spare-neighborhoods-ub} can be used for very sparse induced subgraphs (with sub-linearly many edges when $\alpha > 1$) as long as the condition in the theorem holds for every pair of vertices (rather than for adjacent ones).
	And thirdly, for Theorem~\ref{thm:spare-neighborhoods-ub} we have a matching lower bound for every $0 \le \alpha \le 1$ (Theorem~\ref{thm:spare-neighborhoods-lb}), while for Theorem~\ref{thm:erdos-simonovits} a matching lower bound is only known for $\alpha=1,\frac{1}{2}$, and any tight bound for some $\alpha=\frac{1}{t}$, $t>2$, would be a remarkable result.
	
	\item 
	The proof of Theorem~\ref{thm:spare-neighborhoods-ub} is quite similar to the proof of Theorem~\ref{thm:erdos-simonovits} in~\cite{ES70}.
	In both cases, one first makes the graph close to being regular and then uses double-counting, however, in the case of Theorem~\ref{thm:erdos-simonovits} the first part requires more effort. 
	Chronologically, we first proved the first part of Theorem~\ref{thm:spare-neighborhoods-ub} which was needed for the proof of Theorem~\ref{thm:1-int}, only later did we learn about Theorem~\ref{thm:erdos-simonovits} and used it to prove the second part of Theorem~\ref{thm:spare-neighborhoods-ub}.
	
	\item
	It would be interesting to determine whether Theorem~\ref{thm:spare-neighborhoods-ub} holds when one assumes sparsity of the subgraph induced by the bineighborhood of every pair of (adjacent) vertices instead of assuming sparsity of subgraphs induced by each of their sub-bineighborhoods.
	Note that the first part of the proof of Theorem~\ref{thm:spare-neighborhoods-ub} does not go through in such a case. 
	Clearly, $f$-sparse bineighborhoods do not imply $f$-sparse sub-bineighborhoods.
	For example, recall that $(n^{3/2})/4 \le \mbox{ex}(n,K_{2,2}) \le n^{3/2}$ for sufficiently large $n$~\cite{Kovari} and consider a $K_{2,2}$-free bipartite graph $G$ with $n$ vertices on each side and $\Theta(n^{3/2})$ edges. 
	Assume further that the degree of every vertex in $G$ is at least $c\sqrt{n}$ and at most $C\sqrt{n}$, for some constants $0<c<C$.
	Note that every pair of vertices in $G$ has an $f$-sparse bineighborhood for $f(x)=x/2$.
	Now add $c'=20$ new vertices to each side and connect them to all the vertices on the other side.
	Then for $g(x)=x^{1.6}$ and large enough $n$ the following holds.
	For two original vertices, their bineighborhood contains at least $2c' + 2c\sqrt{n} -2\ge 2c\sqrt{n}$ vertices which induce a subgraph with at most $C\sqrt{n}+2c'C\sqrt{n} + c'^2\le g(2c\sqrt{n})$ edges;
	the bineighborhood of one original vertex and one new vertex consists of at least $n+2c'+c\sqrt{n}-2\ge n$ vertices and it induces a subgraph with at most $(n+C\sqrt{n})^{3/2}+c'n+c'^2+c'C\sqrt{n} \le g(n)$ edges;
	and the bineighborhood of two new vertices consists of at least $2n+2c'-2\ge 2n$ vertices and it induces a subgraph with at most $(2n)^{3/2} + 2c'n +c'^2 \le g(2n)$ edges.
	Thus, in the resulting graph every pair of vertices has a $g$-sparse bineighborhood.
	However, the subgraph induced by the new vertices is a sub-bineighborhood of any two original vertices from different sides and this subgraph has $2c'$ vertices and $c'^2=400$ edges which is more than $g(2c')$.

	\item 
	Theorem~\ref{thm:spare-neighborhoods-ub} might find other applications, e.g., in other scenarios where it can be combined with Theorem~\ref{thm:intersection-reverse}. 
	Here is one example. For a graph $H$ let $\ex_{\textrm{cr}}(n,H)$ be the maximum number of edges in a topological graph on $n$ vertices with no self-intersecting copy $H$. Note that $\ex(n,H)\le \ex_{\textrm{cr}}(n,H)$ and if $H$ is non-planar then $\ex(n,H)=\ex_{\textrm{cr}}(n,H)$. 
	In \cite{janzer2025} Theorem~\ref{thm:intersection-reverse} is used to show that $\ex_{\textrm{cr}}(n,C_4)=O(n^{3/2})$ (and this is tight). Using Theorem~\ref{thm:spare-neighborhoods-ub} this immediately implies:
	\begin{corollary}
		$\ex_{\textrm{cr}}(K_{2,3})=O(n^{5/3})$ and $\ex_{\textrm{cr}}(n,K_{3,3}^-)=O(n^{5/3})$ where $K_{3,3}^-$ is the complete bipartite graph on $3+3$ vertices minus one edge.
	\end{corollary}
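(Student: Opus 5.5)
The plan is to apply Theorem~\ref{thm:spare-neighborhoods-ub} with $\alpha=\frac12$, so that $2-\frac{\alpha}{\alpha+1}=2-\frac13=\frac53$. Since $\alpha=\frac12\le 1$, it would suffice to check adjacent pairs, but the argument below works for every pair, so adjacency is never needed. The sparse-subgraph input is the bound $\ex_{\textrm{cr}}(n,C_4)=O(n^{3/2})$ of~\cite{janzer2025}, which they derive from Theorem~\ref{thm:intersection-reverse}.

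Let $G$ be an $n$-vertex topological graph with no self-intersecting copy of $K_{2,3}$ (respectively, of $K_{3,3}^-$). Fix any two distinct vertices $u,v$. Take disjoint sets $U\subseteq N(u)\setminus\{v\}$ and $V\subseteq N(v)\setminus\{u\}$. Consider the topological bipartite graph $G(U,V)$, drawn as in $G$.

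I claim that $G(U,V)$ contains no self-intersecting $C_4$. Suppose $a_1b_1a_2b_2$ is such a $4$-cycle, with $a_1,a_2\in U$ and $b_1,b_2\in V$. The vertices $a_1,a_2,b_1,b_2,u,v$ are pairwise distinct, for three reasons: $U\cap V=\emptyset$; $u\notin V$ and $v\notin U$ by the choice of $U$ and $V$; and $u\notin U$, $v\notin V$ because a graph has no loops.
\begin{itemize}
\item For $K_{2,3}$: $u$ is adjacent to both $a_1$ and $a_2$. So the parts $\{a_1,a_2\}$ and $\{b_1,b_2,u\}$ span a copy of $K_{2,3}$ in $G$, and this copy contains the self-intersecting $4$-cycle. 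Hence it is a self-intersecting copy of $K_{2,3}$, a contradiction.
\item For $K_{3,3}^-$: in addition, $v$ is adjacent to $b_1$ and $b_2$. So the parts $\{a_1,a_2,v\}$ and $\{b_1,b_2,u\}$ span all edges of $K_{3,3}$ except possibly $uv$. This gives a copy of $K_{3,3}^-$ containing the self-intersecting $C_4$, again a contradiction.
\end{itemize}
The crossings of the $C_4$ are inherited from the drawing of $G$, so "self-intersecting" transfers from the $C_4$ to the larger copy.

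By $\ex_{\textrm{cr}}(m,C_4)=O(m^{3/2})$ we get $|E(G(U,V))|\le f(|U\cup V|)$ with $f(x)=Cx^{3/2}$, which is nonnegative and nondecreasing. So every pair of vertices of $G$ has $f$-sparse sub-bineighborhoods with $\alpha=\frac12$. Theorem~\ref{thm:spare-neighborhoods-ub} then gives $|E(G)|=O(n^{5/3})$ in both cases.

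The main point needing care is exactly what "self-intersecting" means in $\ex_{\textrm{cr}}$ (for instance, two independent edges crossing, possibly with a parity condition). The argument needs that if a $C_4$ subgraph is self-intersecting in that sense, then any copy of $H$ containing it is also self-intersecting in the same sense. I expect this to hold for the definition used in~\cite{janzer2025}, because the crossing edges of the $C_4$ remain independent edges of the larger copy of $H$. Beyond this definitional check, the remaining steps are routine.
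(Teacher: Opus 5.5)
Your proposal is correct and is the argument the paper intends. You plug $\ex_{\textrm{cr}}(n,C_4)=O(n^{3/2})$ from~\cite{janzer2025} into Theorem~\ref{thm:spare-neighborhoods-ub} with $\alpha=\frac12$, observing that a self-intersecting $C_4$ in $G(U,V)$ together with $u$ (and $v$) would give a self-intersecting $K_{2,3}$ (resp.\ $K_{3,3}^-$, where the possibly missing edge is $uv$), so the hypothesis holds for every pair of vertices.
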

\end{itemize}

\paragraph{Acknowledgments.}
We are grateful to Rom Pinchasi for many helpful discussions on the problems studied in this paper. 
In particular he has suggested most of the ingredients of the proofs of Theorems~\ref{thm:bi-infinite} and~\ref{thm:x-mon} and later found out that similar results were already proved by Pach and Sharir~\cite{Pach-Sharir}.
We thank Bal\'azs Patk\'os for his comments about Theorem \ref{thm:spare-neighborhoods-ub}.
For the proof of Theorem~\ref{thm:spare-neighborhoods-lb} we used some back and forth interaction with Google's Large Language Model `Gemini' (\texttt{https://gemini.google.com/}).

\footnotesize
\bibliographystyle{plainurl}
\bibliography{bibliography.bib}

\end{document}